\title{Preconditioning without a preconditioner using randomized block Krylov subspace methods\thanks{%
Received... Accepted... Published online on... Recommended by....
}}
\author{Tyler Chen\footnotemark[2]
        \and Caroline Huber\footnotemark[3] \and Ethan Lin\footnotemark[3] \and Hajar Zaid\footnotemark[4]}
\shorttitle{Preconditioning without a preconditioner} 
\definecolor{c0}{HTML}{1e3264}
\definecolor{c1}{HTML}{c82336}
\crefname{section}{Section}{Sections}
\crefname{subsection}{Subsection}{Subsections}
\crefname{corollary}{Corollary}{Corollaries}
\crefname{proposition}{Proposition}{Propositions}
\crefname{lemma}{Lemma}{Lemmas}
\renewcommand{\vec}{\mathbf}
\renewcommand{\d}{\mathrm{d}}
\newcommand{\cg}[1]{\mathsf{cg}_{#1}}
\newcommand{\pcg}[1]{\mathsf{pcg}_{#1}}
\newcommand{\bcg}[2]{\mathsf{bcg}_{#1}^{(#2)}}
\newcommand{\lansq}[1]{\mathsf{sq}_{#1}}
\newcommand{\blansq}[2]{\mathsf{bsq}_{#1}^{(#2)}}
\newcommand{\bLansq}[1]{\mathsf{bsq}_{#1}}
\newcommand{\blanisq}[2]{\mathsf{bisq}_{#1}^{(#2)}}
\newcommand{\bLanisq}[1]{\mathsf{bisq}_{#1}}
\newcommand{\CG}{\hyperref[def:CG]{CG}}
\newcommand{\BCG}{\hyperref[def:BCG]{block-CG}}
\newcommand{\PCG}{\hyperref[def:PCG]{preconditioned-CG}}
\newcommand{\EE}{\operatorname{\mathbb{E}}}
\newcommand{\PP}{\operatorname{\mathbb{P}}}
\newcommand{\T}{\mathsf{T}}
\newcommand{\tr}{\operatorname{tr}}
\newcommand{\llbracket}{[\![}
\newcommand{\rrbracket}{]\!]}
\begin{document}

\maketitle

\renewcommand{\thefootnote}{\fnsymbol{footnote}}

\footnotetext[2]{JPMorganChase (work on this paper was initiated while at New York University)}
\footnotetext[3]{New York University}
\footnotetext[4]{Graduate Center, CUNY}

\begin{abstract}
We describe a randomized variant of the block conjugate gradient method for solving a single positive-definite linear system of equations.
This method provably outperforms preconditioned conjugate gradient with a broad-class of Nystr\"om-based preconditioners, without ever explicitly constructing a preconditioner.
In analyzing our algorithm, we derive theoretical guarantees for new variants of Nystr\"om preconditioned conjugate gradient which may be of separate interest.
We also describe how our approach yields fast algorithms for key data-science tasks such as computing the entire ridge regression regularization path and generating multiple independent samples from a high-dimensional Gaussian distribution.
\end{abstract}

\begin{keywords}
preconditioning, randomized, conjugate gradient, block Krylov subspace methods
\end{keywords}

\begin{AMS}
65F08, 65F60, 65F50, 68Q25
\end{AMS}

\section{Introduction}
\label{sec:intro}

Solving the regularized linear system
\begin{equation}
\label{eqn:regularized_ls}
\vec{A}_\mu \vec{x} = \vec{b}, \quad \vec{A}_\mu := \vec{A}+\mu\vec{I},
\end{equation}
where $\vec{A}\in\mathbb{R}^{d\times d}$ is symmetric positive definite and $\mu\geq 0$ is a critical task across the computational sciences.
Systems of the form \cref{eqn:regularized_ls} arise in a variety of settings, including the following:

\textit{Positive definite linear systems.}
When $\mu=0$, \cref{eqn:regularized_ls} is simply the task of solving a positive definite linear system $\vec{A}\vec{x} = \vec{b}$, one of the most common problems in numerical linear algebra.

\textit{Ridge regression.}
Given a data matrix $\vec{Z}\in\mathbb{R}^{n\times d}$, a source term $\vec{f}\in\mathbb{R}^n$, and a regularization parameter $\mu\geq 0$, the ridge regression problem\footnote{Ridge regression is a special case of Tikhonov regularization, where $\mu \|\vec{x}\|^2$ is replaced by a somewhat more general regularization term $\|\vec{\Gamma}\vec{x}\|^2$ \cite{engl_hanke_neubauer_96}.} is
\begin{equation}\label{eqn:ridge_regression}
    \min_{\vec{x}\in\mathbb{R}^{d}} \left( \| \vec{Z}\vec{x} - \vec{f} \|^2  + \mu \|\vec{x}\|^2 \right).
\end{equation}
A direct computation shows that the solution to \cref{eqn:ridge_regression} is also the solution to
\begin{equation}
    (\vec{Z}^\T\vec{Z} + \mu \vec{I})\vec{x} = \vec{Z}^\T\vec{f};
\end{equation}
i.e. a system of the form \cref{eqn:regularized_ls} with $\vec{A} = \vec{Z}^\T\vec{Z}$ and $\vec{b} = \vec{Z}^\T\vec{f}$.
In a number of applications we are interested in the {whole regularization path}; i.e. $\vec{x} = \vec{x}_{\mu}$ for all values $\mu\geq 0$; e.g. in order to perform cross-validation to select a value of $\mu$ to use for future predictions.

\textit{Sampling Gaussians.}
If $\vec{b}$ has independent standard normal entries, then $\bm{\mu} + \vec{A}^{1/2}\vec{b}$ is a Gaussian vector with mean $\bm{\mu}$ and covariance $\vec{A}$. 
In order to approximate $\vec{A}^{1/2}\vec{b}$, it is common to make use of the identity
\begin{equation}\label{eqn:sqrt_rational}
    \vec{A}^{1/2}\vec{b}
    = \frac{2}{\pi}\int_{0}^\infty \vec{A}(\vec{A}+z^2\vec{I})^{-1}\vec{b}\, \d z.
\end{equation}
The term $(\vec{A}+z^2\vec{I})^{-1}\vec{b}$ in the integrand of \cref{eqn:sqrt_rational} is of the form \cref{eqn:regularized_ls} with $\mu = z^2$.
Often we are interested in sampling several Gaussian vectors with the same covariance matrix.

\begin{figure}[htb]
    \centering
    \includegraphics[scale=.6,trim={.75cm 0 1cm 0},clip]{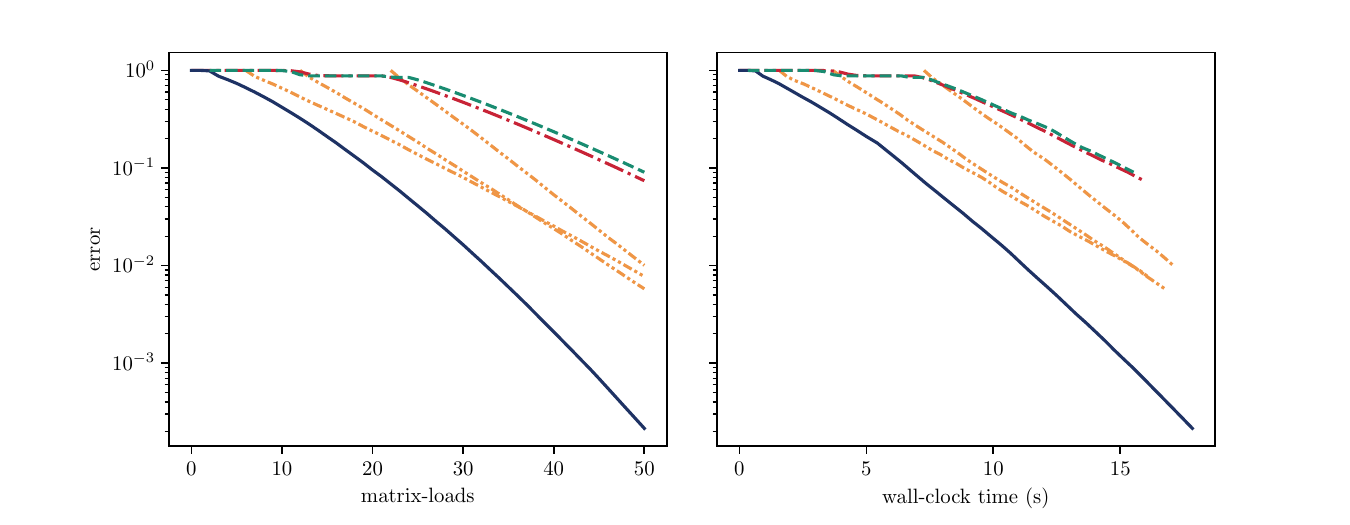}
    \caption{
    Relative error $\|\vec{A}^{-1}\vec{b} - \mathsf{alg}\|_{\vec{A}} / \|\vec{A}^{-1}\vec{b}\|_{\vec{A}}$ in terms of matrix-loads (left) and wall-clock time (right) for our proposed randomized variant of the block conjugate gradient method ({\protect\raisebox{0mm}{\protect\includegraphics[scale=0.68]{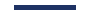}}}),
    standard conjugate gradient
    ({\protect\raisebox{0mm}{\protect\includegraphics[scale=0.68]{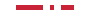}}}), Nystr\"om preconditioned conjugate gradient from \cite{frangella_tropp_udell_23} ({\protect\raisebox{0mm}{\protect\includegraphics[scale=0.68]{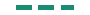}}}), and generalizations Nystr\"om preconditioned conjugate gradient using higher-depth Nystr\"om approximations ({\protect\raisebox{0mm}{\protect\includegraphics[scale=0.68]{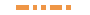}}}).
    Our method outperforms all these methods without the need for selecting hyperparameters (see \cref{thm:main}), which may be difficult to do effectively in practice.
    In particular, we store $\vec{A}$ in 8 separate $1000\times 8000$ chunks and perform (block) matrix-vector products with $\vec{A}$ by sequentially loading a single chunk from the disk into random access memory and performing the appropriate part of the products.
    The runtime is dominated by the cost of loading chunks of the matrix into memory, so the wall-clock-time is nearly proportional to matrix-loads.
    Full experiment description in \cref{sec:numerical:convergence}.
    }
    \label{fig:intro_fig}
\end{figure}

\subsection{(Block) Krylov subspace methods}

All three problems described above are commonly solved using a class of algorithms called Krylov Subspace Methods (KSMs); see e.g., \cite{saad_03,hanke_17,chen_24}.
KSMs make use of the so-called Krylov subspace
\begin{equation*}
\mathcal{K}_t(\vec{A},\vec{b})
:= \operatorname{span}\{\vec{b}, \vec{A}\vec{b}, \ldots, \vec{A}^{t-1}\vec{b}\},
\end{equation*}
which can be efficiently computed using matrix-vector products with $\vec{A}$.
Since it is often of interest to solve \cref{eqn:regularized_ls} for multiple values of $\mu$, we repeatedly make use of the fact that the Krylov subspace is shift-invariant; i.e. that 
\begin{equation}\label{eqn:shift_invariance}
    \forall \mu\in\mathbb{C}: \mathcal{K}_t(\vec{A}_\mu,\vec{b}) = \mathcal{K}_t(\vec{A},\vec{b}).
\end{equation}

In this paper, we advocate for the use of generalization of KSMs called block-KSMs.
Given a matrix $\vec{B}\in\mathbb{R}^{d\times m}$ (typically $m\ll d$) with columns $\vec{b}^{(1)}, \ldots, \vec{b}^{(m)}$, the block Krylov subspace is defined as
\begin{equation}\label{eqn:block_krylov_def}
    \mathcal{K}_t(\vec{A},\vec{B}) := 
    \mathcal{K}_t(\vec{A},\vec{b}^{(1)}) + \cdots + \mathcal{K}_t(\vec{A},\vec{b}^{(m)}).
\end{equation}
That is, $\mathcal{K}_t(\vec{A},\vec{B})$ is the space consisting of all linear combinations of vectors in the Krylov subspaces $\mathcal{K}_t(\vec{A},\vec{b}^{(1)}), \ldots, \mathcal{K}_t(\vec{A},\vec{b}^{(m)})$.
Analogous to \cref{eqn:shift_invariance}, the block Krylov subspace is shift invariant; i.e. $\mathcal{K}_t(\vec{A}_\mu,\vec{B}) = \mathcal{K}_t(\vec{A},\vec{B})$ for any shift $\mu$.

As we discuss in \cref{sec:comp_assm}, compared to their single-vector counterparts, block-KSMs have a number of computational benefits on modern computational architectures. 
As such, block-KSMs are widely used for tasks in numerical linear algebra such as low-rank approximation and eigenvalue approximation, where they are known to satisfy strong theoretical guarantees \cite{golub_underwod_77,saad_80,halko_martinsson_tropp_11,musco_musco_15,li_linderman_slam_stanton_kluger_tygert_17,martinsson_tropp_20,tropp_webber_23}.

Interestingly, the use of block-KSMs for solving problems like \cref{eqn:regularized_ls} is more limited (see \cref{sec:past_work} for a discussion), and single-vector methods such as the conjugate gradient algorithm are more popular. 
A high-level aim of this paper is to encourage further study on the use of block-KSMs for tasks such as \cref{eqn:regularized_ls}. 
In particular, we believe that techniques from randomized numerical linear algebra have the potential to provide new insights and theoretical guarantees for block-KSMs used to solve \cref{eqn:regularized_ls}.

\subsection{Computational assumptions}
\label{sec:comp_assm}
Throughout this paper, we assume $\vec{A}$ is accessed only through matrix-vector products $\vec{x} \mapsto \vec{A}\vec{x}$ or block matrix-vector products $[\vec{x}_1~\cdots~\vec{x}_m]\mapsto [\vec{A}\vec{x}_1~\cdots~\vec{A}\vec{x}_m]$.
While one can simulate a block matrix-vector product using $m$ matrix-vector products, in many settings the cost of block products is often nearly independent of the block size $m$ (so long as $m$ is not too large). 
For example, this can be the case when $\vec{A}$ is so large that it must be loaded in chunks from slow memory (as in \cref{fig:intro_fig}), 
where $\vec{A}$ is the Hessian of a large neural-network and matrix-vector products are performed using ``Pearlmutter's trick'' \cite{pearlmutter_94}, or where $\vec{A}$ corresponds to the solution operator of an integral equation and is applied via a solver fast direct solver \cite{martinsson_oneil_25}.
Allowing for more efficient data-access is widely understood as one of the major benefits of many randomized linear algebra algorithms \cite{martinsson_tropp_20,tropp_webber_23}.

The algorithms we propose in this paper are designed to economize the number of times $\vec{A}$ is loaded into memory (henceforth referred to as matrix-loads), and not other costs such as matrix-vector products, floating point operations, or storage. 
In settings where matrix-loads are not the dominant cost, the proposed algorithms may not yield significant advantages.
The costs of the algorithm from this paper are discussed more in \cref{sec:runtime}, and we explore them in the numerical experiments in \cref{sec:numerical}.
A detailed understanding of tradeoffs in various costs is somewhat beyond the scope of this paper, but is of practical importance. 

\subsection{Notation}

We denote the eigenvalues of $\vec{A}$ by $\lambda_1\geq \cdots\geq \lambda_d$.
We write $\kappa = \lambda_1/\lambda_d$ and $\kappa_{r+1}(\mu) := (\lambda_{r+1}+\mu)/(\lambda_d+\mu)$, as the condition number of $\vec{A}_\mu$ with the top-$r$ eigenvalues removed.
The eigenvalues of an arbitrary positive definite matrix $\vec{M}$ are $\lambda_1(\vec{M})\geq \cdots \geq \lambda_d(\vec{M})$. 
The condition number of $\vec{M}$ is $\kappa(\vec{M}) := \lambda_1(\vec{M}) / \lambda_d(\vec{M})$ and the spectral norm is $\|\vec{M}\| := \lambda_1(\vec{B})$.
The $\vec{M}$-norm of a vector $\vec{x}$ is $\|\vec{x}\|_{\vec{M}} := \| \vec{M}^{1/2} \vec{x} \| = \sqrt{\vec{x}^\T \vec{M} \vec{x}}$.

\subsection{Organization}

In \cref{sec:bg} we describe key background on Krylov subspace methods and preconditioning.
Our algorithm and main conceptual results are described in \cref{sec:ABCG}.
Explicit and simple to use probabilistic results are proved in \cref{sec:analysis}.
We discuss our bounds in the context of past work in \cref{sec:past_work} and show how the theory we develop can be used to sample Gaussian vectors in \cref{sec:gaussian_sample}.
Finally, numerical experiments are provided in \cref{sec:numerical}.

\section{Background}
\label{sec:bg}
In this section we provide an overview of a number of relevant algorithms.
The methods we discuss in this section are standard, and for mathematical simplicity, we define them by their optimality conditions rather than algorithmically (all of our bounds assume exact arithmetic); see e.g., \cite{greenbaum_97,saad_03,meurant_06} for comprehensive treatments of these methods.

\subsection{Preconditioned Conjugate Gradient}

The preconditioned conjugate gradient algorithm (PCG) is a powerful KSM for solving positive definite linear systems. 
PCG makes use of a symmetric positive definite preconditioner $\vec{P}_{\mu}$ that transforms the system $\vec{A}_{\mu}\vec{x} = \vec{b}$ into the system 
\begin{equation*}
    (\vec{P}_{\mu}^{-1/2} \vec{A}_{\mu} \vec{P}_{\mu}^{-1/2}) \vec{y} = \vec{P}_{\mu}^{-1/2}\vec{b}
    ,\quad
    \vec{P}_{\mu}^{1/2}\vec{x} = \vec{y},
\end{equation*}
and works over a transformed Krylov subspace
\begin{equation}
\label{eqn:precond_krylov_def}
\mathcal{K}_t(\vec{A}_{\mu},\vec{b};\vec{P}_{\mu})
:= 
\mathcal{K}_t(\vec{P}_{\mu}^{-1}\vec{A}_{\mu},\vec{P}_{\mu}^{-1}\vec{b}).
\end{equation}
Specifically, PCG is defined defined by an optimality condition.
\begin{definition}\label{def:PCG}
    The $t$-th PCG iterate corresponding to a positive definite preconditioner $\vec{P}_{\mu}$ is defined as 
    \begin{equation*}
    \pcg{t}(\mu) := \operatornamewithlimits{argmin}_{\vec{x}\in\mathcal{K}_t(\vec{A}_{\mu},\vec{b};\vec{P}_{\mu})} \| \vec{A}_{\mu}^{-1}\vec{b} - \vec{x} \|_{\vec{A}_{\mu}}.
    \end{equation*}
\end{definition}
The iterate $\pcg{t}(\mu)$ can be computed using $t-1$ matrix-vector products with $\vec{A}$ (and $t$ products with $\vec{P}_{\mu}^{-1}$). 

The simplest choice of preconditioner is $\vec{P}_\mu = \vec{I}$, which yields the standard conjugate gradient algorithm (CG) \cite{hestenes_stiefel_52}. 
\begin{definition}\label{def:CG}
    The $t$-th CG iterate is defined as 
    \begin{equation*}
    \cg{t}(\mu) := \operatornamewithlimits{argmin}_{\vec{x}\in\mathcal{K}_t(\vec{A}_{\mu},\vec{b})} \| \vec{A}_{\mu}^{-1}\vec{b} - \vec{x} \|_{\vec{A}_{\mu}}.
    \end{equation*}
\end{definition}

\subsubsection{Convergence}
PCG (and hence CG) satisfy a well-known convergence guarantee in terms of the condition number of $\vec{P}_{\mu}^{-1/2}\vec{A}_{\mu}\vec{P}_{\mu}^{-1/2}$; see e.g. \cite{greenbaum_97}.\footnote{Here we use that $(({\sqrt{\kappa}-1})/({\sqrt{\kappa}+1}))^t \leq \exp(-2t/\sqrt{\kappa})$ for all $\kappa\geq1$ in order to obtain a simpler expression than the typical bound. This approximation is pessimistic when $\kappa\approx 1$, but it's clear from our analysis that other bounds for CG can be used instead.}

\begin{corollary}\label[corollary]{thm:pcg_condno_bd}
Let $\vec{P}_\mu$ be any preconditioner. 
Then the $t$-th \PCG{} iterate corresponding to the preconditioner $\vec{P}_\mu$ satisfies
\begin{equation*}
    \frac{\| \vec{A}_\mu^{-1}\vec{b} - \pcg{t}(\mu) \|_{\vec{A}_\mu}}{\| \vec{A}_\mu^{-1}\vec{b}\|_{\vec{A}_\mu}}
    \leq 2 \exp\left(-\frac{2t}{\sqrt{\kappa(\vec{P}_{\mu}^{-1/2}\vec{A}_{\mu}\vec{P}_{\mu}^{-1/2})}}\right).
\end{equation*}
\end{corollary}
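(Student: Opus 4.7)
The plan is to reduce the preconditioned system to an unpreconditioned one via a change of variables, then invoke the classical Chebyshev-based convergence bound for \CG{}, and finally apply the simplifying inequality noted in the footnote.

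First I would introduce the symmetrized system matrix $\tilde{\vec{A}} := \vec{P}_\mu^{-1/2}\vec{A}_\mu \vec{P}_\mu^{-1/2}$, right-hand side $\tilde{\vec{b}} := \vec{P}_\mu^{-1/2}\vec{b}$, and unknown $\vec{y} := \vec{P}_\mu^{1/2}\vec{x}$, so that $\vec{A}_\mu \vec{x} = \vec{b}$ is equivalent to $\tilde{\vec{A}}\vec{y} = \tilde{\vec{b}}$. A direct check using $\mathcal{K}_t(\vec{A}_\mu,\vec{b};\vec{P}_\mu) = \mathcal{K}_t(\vec{P}_\mu^{-1}\vec{A}_\mu,\vec{P}_\mu^{-1}\vec{b})$ shows that the map $\vec{x}\mapsto \vec{P}_\mu^{1/2}\vec{x}$ sends this subspace bijectively onto $\mathcal{K}_t(\tilde{\vec{A}},\tilde{\vec{b}})$, and that for any $\vec{x}$ one has $\|\vec{A}_\mu^{-1}\vec{b}-\vec{x}\|_{\vec{A}_\mu} = \|\tilde{\vec{A}}^{-1}\tilde{\vec{b}} - \vec{P}_\mu^{1/2}\vec{x}\|_{\tilde{\vec{A}}}$. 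Hence $\pcg{t}(\mu)$ in the original variables corresponds exactly to the $t$-th \CG{} iterate for $\tilde{\vec{A}}\vec{y} = \tilde{\vec{b}}$, and the left-hand side of the claimed bound equals the analogous relative $\tilde{\vec{A}}$-norm error for \CG{} applied to this transformed system.

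Next I would invoke the standard polynomial-minimization characterization of \CG{}: the error $\tilde{\vec{A}}^{-1}\tilde{\vec{b}} - \vec{y}_t$ equals $q(\tilde{\vec{A}})\tilde{\vec{A}}^{-1}\tilde{\vec{b}}$ where $q$ ranges over polynomials of degree at most $t$ with $q(0)=1$, and the $\tilde{\vec{A}}$-norm of the error is minimized over such $q$. Bounding the minimum by evaluating at the scaled and shifted Chebyshev polynomial $T_t$ on the interval $[\lambda_d(\tilde{\vec{A}}),\lambda_1(\tilde{\vec{A}})]$ gives the classical estimate
\begin{equation*}
    \frac{\|\tilde{\vec{A}}^{-1}\tilde{\vec{b}} - \vec{y}_t\|_{\tilde{\vec{A}}}}{\|\tilde{\vec{A}}^{-1}\tilde{\vec{b}}\|_{\tilde{\vec{A}}}}
    \leq 2\left(\frac{\sqrt{\kappa(\tilde{\vec{A}})}-1}{\sqrt{\kappa(\tilde{\vec{A}})}+1}\right)^t.
\end{equation*}
Applying the footnoted inequality $((\sqrt{\kappa}-1)/(\sqrt{\kappa}+1))^t\le \exp(-2t/\sqrt{\kappa})$ with $\kappa = \kappa(\tilde{\vec{A}})$ then yields the stated bound.

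Since both the change of variables and the Chebyshev bound are textbook, there is no real obstacle here; the one place to be careful is verifying that the $\vec{A}_\mu$-norm error and the preconditioned Krylov subspace transform correctly under $\vec{x}\mapsto \vec{P}_\mu^{1/2}\vec{x}$, so that \cref{def:PCG} exactly matches \CG{} applied to $\tilde{\vec{A}}\vec{y}=\tilde{\vec{b}}$. Once that correspondence is pinned down, the corollary follows by citing the standard \CG{} convergence theorem (e.g. from \cite{greenbaum_97}) for $\tilde{\vec{A}}$.
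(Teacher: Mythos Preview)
Your proposal is correct and follows the standard approach. The paper itself does not prove this corollary at all; it simply states it as a well-known result with a citation to \cite{greenbaum_97}, together with the footnote explaining the inequality $((\sqrt{\kappa}-1)/(\sqrt{\kappa}+1))^t \leq \exp(-2t/\sqrt{\kappa})$ used to simplify the classical Chebyshev bound. Your sketch---reduce \PCG{} to \CG{} on the symmetrized system via $\vec{x}\mapsto\vec{P}_\mu^{1/2}\vec{x}$, invoke the Chebyshev polynomial bound, then apply the exponential simplification---is exactly the textbook argument underlying that citation, so there is nothing to add.
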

\Cref{thm:pcg_condno_bd} implies that if we can find $\vec{P}_\mu$ so that $\kappa(\vec{P}_{\mu}^{-1/2}\vec{A}_{\mu}\vec{P}_{\mu}^{-1/2})$ is small, then PCG converges rapidly. 
The choice of $\vec{P}_\mu$ minimizing this condition number is of course $\vec{P}_\mu = \vec{A}_\mu$, but this is not a practical preconditioner; if we knew $\vec{A}_\mu^{-1}$ we could easily compute the solution $\vec{A}_\mu^{-1}\vec{b}$.
Thus, finding a suitable choice of $\vec{P}_\mu$, which balances improvements to the convergence of PCG with the cost of building/applying $\vec{P}_\mu$ is critical.

\subsection{Deflation preconditioner}

If $\vec{A}$ is poorly conditioned due to the presence of $r$ eigenvalues much larger than the remaining $n-r$ eigenvalues, then we might hope to learn a good approximation of the top $r$ eigenvalues and ``correct'' this ill-conditioning. 
Towards this end, suppose we have a good rank-$r$ approximation $\vec{U}\vec{D}\vec{U}^\T$ to $\vec{A}$, where $\vec{U}$ has orthonormal columns and $\vec{D}$ is diagonal.
Intuitively, this low-rank approximation contains the information needed to remove the top eigenvalues of $\vec{A}$, thereby reducing the condition number of the preconditioned system.
In particular, one can form the preconditioner 
\begin{equation}\label{eqn:precond_def}
\vec{P}_{\mu}
:= \frac{1}{\theta + \mu} \vec{U}(\vec{D}+\mu\vec{I})\vec{U}^\T + (\vec{I} - \vec{U}\vec{U}^\T),
\end{equation}
where $\theta > 0$ is a parameter that must be chosen along with the factorization $\vec{U}\vec{D}\vec{U}^\T$.

It is not hard to verify that 
\begin{equation*}
\vec{P}_{\mu}^{-1}
= (\theta + \mu) \vec{U}(\vec{D}+\mu\vec{I})^{-1} \vec{U}^\T + (\vec{I} - \vec{U}\vec{U}^\T).
\end{equation*}
This means applying $\vec{P}_{\mu}^{-1}$ to a vector can be done in  $O(dr)$ arithmetic operations, which is relatively cheap if $r\ll d$.
Hence, so long as a reasonable factorization $\vec{U}\vec{D}\vec{U}^\T$ can be obtained, \cref{eqn:precond_def} can be used as a preconditioner; see \cite{frank_vuik_01,gutknecht_12} and the references within.

\subsubsection{Exact deflation}

Traditionally, it has been suggested to take $\vec{U}\vec{D}\vec{U}^\T$ as $\llbracket \vec{A}\rrbracket_r$, the \emph{best} rank-$r$ approximation to $\vec{A}$; i.e. to set $\vec{D}$ as a diagonal matrix containing the top $r$ eigenvalues of $\vec{A}$ and $\vec{U}$ the corresponding eigenvectors.
In this case \cref{eqn:precond_def} is sometimes called a spectral deflation preconditioner.
The name arises because the eigenvalues of $\vec{P}_{\mu}^{-1/2} \vec{A}_\mu \vec{P}_{\mu}^{-1/2}$ are 
\begin{equation*}
\underbrace{\theta+\mu, \ldots, \theta+\mu}_{\text{$r$-copies}}, \lambda_{r+1}+\mu, \ldots, \lambda_d+\mu,
\end{equation*}
and hence, if $\theta\in[\lambda_d,\lambda_{r+1}]$, then the condition number of $\vec{P}_\mu^{-1/2}\vec{A}_\mu\vec{P}_\mu^{-1/2}$ is 
$\kappa_{r+1}(\mu) = (\lambda_{r+1}+\mu)/(\lambda_d+\mu)$.
In other words, the top $r$ eigenvalues of $\vec{A}_\mu$ are deflated. 
In combination with \cref{thm:pcg_condno_bd}, this yields the following convergence guarantee.

\begin{corollary}\label[corollary]{thm:deflation_precond}
Let $\vec{P}_\mu$ be the preconditioner \cref{eqn:precond_def} corresponding to $\llbracket\vec{A}\rrbracket_r$, the rank-$r$ truncated SVD of $\vec{A}$, and $\theta\in[\lambda_d,\lambda_{r+1}]$.
Then the $t$-th PCG iterate (\cref{def:PCG}) corresponding to the preconditioner $\vec{P}_\mu$ satisfies
\begin{equation*}
    \frac{\| \vec{A}_\mu^{-1}\vec{b} - \pcg{t}(\mu) \|_{\vec{A}_\mu}}{\| \vec{A}_\mu^{-1}\vec{b}\|_{\vec{A}_\mu}}
    \leq 2 \exp\left( - \frac{2t}{\sqrt{\kappa_{r+1}(\mu)}} \right) .
\end{equation*}
\end{corollary}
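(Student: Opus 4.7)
The plan is to verify the claimed condition number $\kappa_{r+1}(\mu)$ for the preconditioned operator $\vec{P}_\mu^{-1/2}\vec{A}_\mu\vec{P}_\mu^{-1/2}$ and then simply invoke \cref{thm:pcg_condno_bd}.

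First I would exploit the simultaneous diagonalization available in this setting. Write the eigendecomposition $\vec{A} = \vec{U}\vec{D}\vec{U}^\T + \vec{V}\vec{\Lambda}_\bot \vec{V}^\T$, where $\vec{V}$ contains the trailing $d-r$ eigenvectors of $\vec{A}$ and $\vec{\Lambda}_\bot = \diag(\lambda_{r+1},\ldots,\lambda_d)$, so that $\vec{U}\vec{U}^\T + \vec{V}\vec{V}^\T = \vec{I}$ and $\vec{U}^\T\vec{V} = \vec{0}$. Since $\vec{I}-\vec{U}\vec{U}^\T = \vec{V}\vec{V}^\T$, the preconditioner \cref{eqn:precond_def} takes the block-diagonal form
\begin{equation*}
\vec{P}_\mu = \tfrac{1}{\theta+\mu}\vec{U}(\vec{D}+\mu\vec{I})\vec{U}^\T + \vec{V}\vec{V}^\T,
\end{equation*}
and, using the orthogonality of $\vec{U}$ and $\vec{V}$, its square root inverse is
\begin{equation*}
\vec{P}_\mu^{-1/2} = \sqrt{\theta+\mu}\,\vec{U}(\vec{D}+\mu\vec{I})^{-1/2}\vec{U}^\T + \vec{V}\vec{V}^\T.
\end{equation*}

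Next I would multiply this out against $\vec{A}_\mu = \vec{U}(\vec{D}+\mu\vec{I})\vec{U}^\T + \vec{V}(\vec{\Lambda}_\bot + \mu\vec{I})\vec{V}^\T$. The cross-terms vanish because $\vec{U}^\T\vec{V}=\vec{0}$, the $\vec{U}$-block collapses to $(\theta+\mu)\vec{U}\vec{U}^\T$ since the factors of $(\vec{D}+\mu\vec{I})^{\pm 1/2}$ cancel, and the $\vec{V}$-block is untouched. Thus
\begin{equation*}
\vec{P}_\mu^{-1/2}\vec{A}_\mu \vec{P}_\mu^{-1/2} = (\theta+\mu)\vec{U}\vec{U}^\T + \vec{V}(\vec{\Lambda}_\bot+\mu\vec{I})\vec{V}^\T,
\end{equation*}
whose eigenvalues are $\theta+\mu$ (with multiplicity $r$) together with $\lambda_{r+1}+\mu,\ldots,\lambda_d+\mu$.

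Finally I would use the hypothesis $\theta \in [\lambda_d,\lambda_{r+1}]$, which forces the extremal eigenvalues of the preconditioned operator to be $\lambda_{r+1}+\mu$ and $\lambda_d+\mu$, so $\kappa(\vec{P}_\mu^{-1/2}\vec{A}_\mu\vec{P}_\mu^{-1/2}) = \kappa_{r+1}(\mu)$. Plugging into \cref{thm:pcg_condno_bd} yields the stated bound. The only genuinely non-routine piece is the spectral split of $\vec{P}_\mu^{-1/2}\vec{A}_\mu\vec{P}_\mu^{-1/2}$, and even that is entirely mechanical once one writes $\vec{A}$ in the eigenbasis of $\vec{U}$ and $\vec{V}$; there is no real obstacle here.
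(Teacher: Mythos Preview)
Your proposal is correct and follows exactly the same route as the paper: the paper states (just before the corollary) that the eigenvalues of $\vec{P}_\mu^{-1/2}\vec{A}_\mu\vec{P}_\mu^{-1/2}$ are $\theta+\mu$ (with multiplicity $r$) together with $\lambda_{r+1}+\mu,\ldots,\lambda_d+\mu$, concludes the condition number is $\kappa_{r+1}(\mu)$ when $\theta\in[\lambda_d,\lambda_{r+1}]$, and invokes \cref{thm:pcg_condno_bd}. Your write-up simply supplies the explicit block-diagonal computation that the paper leaves implicit.
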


When $\lambda_{r+1} \ll \lambda_1$ and $\mu$ is not too large, then the rate of convergence guaranteed by the bound can be much faster than without preconditioning ($r=0$).\footnote{Standard CG also satisfies bounds in terms of $\lambda_{r+1}/\lambda_d$, at least in exact arithmetic; see  \cref{thm:cg_condno_bd_full} and the discussion in \cref{sec:comparison}.}
Of course, any potential benefits to convergence must be weighted against the cost of constructing the spectral deflation preconditioner, and exact deflation, which requires computing the top eigenvectors of $\vec{A}$, can be costly.

\subsubsection{Nystr\"om preconditioning}
\label{sec:nystrom_precond}

Techniques from randomized numerical linear algebra allow near-optimal low-rank approximations to $\vec{A}$ to be computed very efficiently \cite{halko_martinsson_tropp_11,tropp_webber_23}, so we might hope that the corresponding preconditioner works nearly as well as spectral deflation, while being much cheaper to compute.

In particular, it's reasonable to take $\vec{U}\vec{D}\vec{U}^\T$ as the eigendecomposition of the randomized block-Krylov Nystr\"om approximation, \emph{mathematically} defined as
\begin{equation}\label{eqn:nystromBKI_def}
    \vec{A}\langle \vec{K}_s \rangle := (\vec{A}\vec{K}_s)(\vec{K}_s^\T\vec{A}\vec{K}_s)^\dagger (\vec{K}_s^\T\vec{A}),
\end{equation}
where $\vec{K}_s := [\vec{\Omega}~\vec{A}\vec{\Omega}~\cdots~\vec{A}^{s-1}\vec{\Omega}] \in \mathbb{R}^{d\times(s\ell)}$ and $\vec{\Omega}\in\mathbb{R}^{d\times \ell}$ is a matrix of independent standard normal random variables.\footnote{In this paper, we only consider Nystr\"om preconditioning where $\vec{\Omega}$ is a Gaussian matrix. Variants of Nystr\"om approximation based on subsampling rows/columns of $\vec{A}$ are effective in some settings (e.g. Kernel Ridge Regression); see \cite[\S2.2.3]{frangella_tropp_udell_23} for a discussion.}
This variant of the Nystr\"om approximation is among the most powerful randomized low-rank approximation algorithms, and can be implemented using $s$ matrix-loads.
Note, however, that the algorithm should not be implemented as written in \cref{eqn:nystromBKI_def}. 
In particular, one should avoid using a monomial basis $\vec{K}_s$ and carefully structure the interactions with $\vec{A}$ to avoid unnecessary costs; see e.g. \cite{tropp_webber_23} for pseudocode.

Nystr\"om-based preconditioning is effective in theory and practice, and has been an active area of research in recent years \cite{martinsson_tropp_20,frangella_tropp_udell_23,carson_dauzickaite_24,diaz_epperly_frangella_tropp_webber_23,zhao_xu_huang_chow_xi_24,hong_xu_hu_fessler_24,dereziski_musco_yang_25}.
Most related to the content of this paper is the theoretical analysis of \cite{frangella_tropp_udell_23}, which proves that if $s=1$, $\theta = \lambda_{\ell}(\vec{A}\langle \vec{\Omega}\rangle)$, and the sketching dimension $\ell$ is on the order of the effective dimension $d_{\textup{eff}}(\mu) := \tr\bigl(\vec{A} \vec{A}_{\mu}^{-1}\bigr)
= \sum_{i=1}^{d} {\lambda_i}/{(\lambda_i+\mu)}$,
then Nystr\"om PCG converges in at most $t = O(\log(1/\varepsilon))$ iterations; i.e. independent of any spectral properties of $\vec{A}$.
Our analysis makes use of the same general techniques as \cite{frangella_tropp_udell_23}, but is applicable when $s>1$ as well as if $\mu=0$.
We discuss the theoretical bounds of \cite{frangella_tropp_udell_23} in \cref{sec:effective_dim}.

\section{Our approach: augmented block-CG}
\label{sec:ABCG}
In this paper, we advocate for the direct application of block KSMs to \cref{eqn:regularized_ls}.
The concept of block Krylov subspaces \cref{eqn:block_krylov_def} naturally gives rise to the
block-CG algorithm \cite{oleary_80}.

\begin{definition}\label{def:BCG}
    Let $\vec{B} = [\vec{b}^{(1)}~\cdots~\vec{b}^{(m)}]$.
    The $t$-th block-CG iterates are defined as
    \begin{equation*}
    \bcg{t}{i}(\mu) := \operatornamewithlimits{argmin}_{\vec{x}\in\mathcal{K}_t(\vec{A}_\mu,\vec{B})} \| \vec{A}_\mu^{-1}\vec{b}^{(i)} - \vec{x} \|_{\vec{A}_\mu}.
    \end{equation*}
\end{definition}

The block-CG iterates $\bcg{t}{1}(\mu), \ldots, \bcg{t}{m}(\mu)$ can be simultaneously computed using $t-1$ block matrix-vector products with $\vec{A}$.
We discuss implementation and costs further in \cref{sec:runtime}.

\subsection{Implicit preconditioning}
Our main conceptual result is that by augmenting $\vec{b}$ with $\vec{\Omega}$, block-CG implicitly enjoys the benefits of certain classes of preconditioners built using $\vec{\Omega}$.
We begin with a key observation about the relation between the block Krylov subspace \cref{eqn:block_krylov_def} and the preconditioned Krylov subspace \cref{eqn:precond_krylov_def} corresponding to a certain class of preconditioners.
\begin{theorem}\label{thm:nested}
Suppose $\vec{P}_{\mu} = (\vec{I} + \vec{X})^{-1}$, where $\operatorname{range}(\vec{X}) \subseteq\mathcal{K}_{s+1}(\vec{A}_{\mu},\vec{\Omega})$.
Then,
\begin{equation*}
\mathcal{K}_t(\vec{A}_{\mu},\vec{b};\vec{P}_{\mu})
\subseteq
\mathcal{K}_t(\vec{A},\vec{b}) + \mathcal{K}_{t+s}(\vec{A},\vec{\Omega}). 
\end{equation*}
\end{theorem}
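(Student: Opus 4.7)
The plan is to argue by induction that each generator of the preconditioned Krylov subspace $\mathcal{K}_t(\vec{A}_\mu,\vec{b};\vec{P}_\mu) = \mathcal{K}_t(\vec{P}_\mu^{-1}\vec{A}, \vec{P}_\mu^{-1}\vec{b})$ lies in the sum on the right-hand side. Specifically, the preconditioned subspace is spanned by the vectors $(\vec{P}_\mu^{-1}\vec{A})^k \vec{P}_\mu^{-1}\vec{b}$ for $k = 0, 1, \ldots, t-1$, so it suffices to show the stronger claim that
\begin{equation*}
(\vec{P}_\mu^{-1}\vec{A})^k \vec{P}_\mu^{-1}\vec{b} \;\in\; \mathcal{K}_{k+1}(\vec{A},\vec{b}) + \mathcal{K}_{k+s+1}(\vec{A},\vec{\Omega}), \qquad k=0,1,\ldots,t-1,
\end{equation*}
which immediately implies the theorem since both summands on the right are nested with $k$.

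Before starting the induction, I would record two simple observations. First, by the standard shift-invariance of Krylov subspaces applied to the scalar shift $\mu$, we have $\mathcal{K}_{s+1}(\vec{A}_\mu,\vec{\Omega}) = \mathcal{K}_{s+1}(\vec{A},\vec{\Omega})$, so the hypothesis $\operatorname{range}(\vec{X}) \subseteq \mathcal{K}_{s+1}(\vec{A}_\mu,\vec{\Omega})$ is equivalent to $\operatorname{range}(\vec{X}) \subseteq \mathcal{K}_{s+1}(\vec{A},\vec{\Omega})$. Second, $\vec{P}_\mu^{-1} = \vec{I} + \vec{X}$, so for any vector $\vec{y}$ the decomposition $\vec{P}_\mu^{-1}\vec{y} = \vec{y} + \vec{X}\vec{y}$ splits naturally into a piece that leaves $\vec{y}$ untouched and a piece that lands inside $\mathcal{K}_{s+1}(\vec{A},\vec{\Omega})$.

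For the base case $k=0$, I would write $\vec{P}_\mu^{-1}\vec{b} = \vec{b} + \vec{X}\vec{b}$ and note that $\vec{b} \in \mathcal{K}_1(\vec{A},\vec{b})$ while $\vec{X}\vec{b} \in \mathcal{K}_{s+1}(\vec{A},\vec{\Omega})$. For the inductive step, assume the claim at level $k-1$, so that $\vec{v} := (\vec{P}_\mu^{-1}\vec{A})^{k-1}\vec{P}_\mu^{-1}\vec{b}$ decomposes as $\vec{v} = \vec{v}_1 + \vec{v}_2$ with $\vec{v}_1 \in \mathcal{K}_{k}(\vec{A},\vec{b})$ and $\vec{v}_2 \in \mathcal{K}_{k+s}(\vec{A},\vec{\Omega})$. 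Multiplying by $\vec{A}$ advances each piece by one degree, so $\vec{A}\vec{v} \in \mathcal{K}_{k+1}(\vec{A},\vec{b}) + \mathcal{K}_{k+s+1}(\vec{A},\vec{\Omega})$; applying $\vec{P}_\mu^{-1} = \vec{I} + \vec{X}$ then contributes an additional term $\vec{X}\vec{A}\vec{v} \in \operatorname{range}(\vec{X}) \subseteq \mathcal{K}_{s+1}(\vec{A},\vec{\Omega}) \subseteq \mathcal{K}_{k+s+1}(\vec{A},\vec{\Omega})$, which is absorbed into the second summand.

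I do not expect a serious obstacle here: the argument is essentially bookkeeping about Krylov degrees, and the structural hypothesis on $\vec{X}$ is exactly what is needed so that every application of $\vec{P}_\mu^{-1}$ costs at most one extra degree against the $\vec{b}$-block and leaves the $\vec{\Omega}$-block intact. The one subtlety worth flagging clearly in the write-up is the use of shift-invariance to replace $\vec{A}_\mu$ by $\vec{A}$ in the range hypothesis; once that is done, $\mu$ disappears from the argument entirely and the induction goes through mechanically.
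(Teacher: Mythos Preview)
Your proposal is correct and follows essentially the same approach as the paper: both reduce to $\mu=0$ via shift invariance and then argue by induction that each generator $(\vec{P}^{-1}\vec{A})^k\vec{P}^{-1}\vec{b}$ lands in the claimed sum, using the decomposition $\vec{P}^{-1}\vec{y} = \vec{y} + \vec{X}\vec{y}$ together with $\operatorname{range}(\vec{X}) \subseteq \mathcal{K}_{s+1}(\vec{A},\vec{\Omega})$. Your version is slightly cleaner in that it tracks the exact Krylov degree of each individual generator rather than phrasing the inductive hypothesis at the level of the whole subspace, but the substance is identical.
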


The basic idea is simple. 
By definition, $\mathcal{K}_t(\vec{A}_{\mu},\vec{b};\vec{P}_{\mu})$ consists of linear combinations of the vectors 
\begin{equation*}
    (\vec{P}_{\mu}^{-1}\vec{A}_\mu)^k \vec{P}_\mu^{-1}\vec{b} = ((\vec{I}+\vec{X}) \vec{A}_\mu)^{k} (\vec{I}+\vec{X})\vec{b},
    \quad k=0,1,\ldots, t-1,
\end{equation*}
and each $((\vec{I}+\vec{X}) \vec{A}_\mu)^{k} (\vec{I}+\vec{X})\vec{b}$ can be expressed as linear combination of vectors which live in the specified space.

\begin{proof}~
By the shift invariance of (block) Krylov subspaces, 
$\mathcal{K}_t(\vec{A},\vec{b}) + \mathcal{K}_{t+s}(\vec{A},{\Omega}) = \mathcal{K}_t(\vec{A}_\mu,\vec{b}) + \mathcal{K}_{t+s}(\vec{A}_\mu,\vec{\Omega})$.
Hence, without loss of generality it suffices to consider the case $\mu=0$.
For notational simplicity we will denote $\vec{P}_{\mu}$ by $\vec{P}$.
      
We proceed by induction, beginning with the base case $t = 1$. 
Observe that, 
\begin{equation*}
    \mathcal{K}_1(\vec{P}^{-1}\vec{A}, \vec{P}^{-1}\vec{b}) 
    = \operatorname{span}(\vec{P}^{-1}\vec{b}) 
    = \operatorname{span}((\vec{I} + \vec{X})\vec{b})  
    = \operatorname{span}(\vec{b} + \vec{X}\vec{b}).
\end{equation*}
Clearly $\vec{b}\in\mathcal{K}_1(\vec{A},\vec{b})$ and $\vec{X}\vec{b}\in \operatorname{range}(\vec{X}) \subseteq \mathcal{K}_{s+1}(\vec{A}, \vec{\Omega})$, so $\vec{\vec{b} + \vec{X}\vec{b}} \subseteq \mathcal{K}_t(\vec{A},\vec{b}) + \mathcal{K}_{t+s}(\vec{A}, \vec{\Omega})$ as desired.

Now, assume that 
\begin{equation*}
   \mathcal{K}_{t-1}(\vec{P}^{-1}\vec{A}, \vec{P}^{-1}\vec{b}) 
   \subseteq \mathcal{K}_{t-1}(\vec{A},\vec{b}) + \mathcal{K}_{(t-1)+s}(\vec{A}, \vec{\Omega}).
\end{equation*}
We consider the order $t$ subspace 
\begin{align*}
    &\mathcal{K}_t(\vec{P}^{-1}\vec{A}, \vec{P}^{-1}\vec{b}) = \operatorname{span}\{\vec{P}^{-1}\vec{b}, (\vec{P}^{-1}\vec{A})\vec{P}^{-1}\vec{b}, \ldots, 
    \nonumber\\&\hspace{13em}
    (\vec{P}^{-1}\vec{A})^{t-2}\vec{P}^{-1}\vec{b}, 
    (\vec{P}^{-1}\vec{A})^{t-1}\vec{P}^{-1}\vec{b}\}.
\end{align*}
\newpage
From the inductive hypothesis, we know that for all $j = 0, ..., t-2$,
\begin{align*}
    (\vec{P}^{-1}\vec{A})^j\vec{P}^{-1}\vec{b} &\in \mathcal{K}_{t-1}(\vec{P}^{-1}\vec{A}, \vec{P}^{-1}\vec{b}) \\
    &\subseteq \mathcal{K}_{t-1}(\vec{A},\vec{b}) + \mathcal{K}_{t+s-1}(\vec{A}, \vec{\Omega}) \\
    &\subseteq \mathcal{K}_{t}(\vec{A},\vec{b}) + \mathcal{K}_{t+s} (\vec{A}, \vec{\Omega}),
\end{align*}
where the last inclusion follows from the nested property of Krylov subspaces. 

Thus, it remains to show that $(\vec{P}^{-1}\vec{A})^{t-1}\vec{P}^{-1}\vec{b} \in \mathcal{K}_{t}(\vec{A},\vec{b}) + \mathcal{K}_{t+s}(\vec{A}, \vec{\Omega})$.
Towards this end, let $\vec{v} = (\vec{P}^{-1}\vec{A})^{t-2}\vec{P}^{-1}\vec{b}$ and observe that
\begin{equation*}
    (\vec{P}^{-1}\vec{A})^{t-1}\vec{P}^{-1}\vec{b} 
    = (\vec{P}^{-1}\vec{A})\vec{v} 
    = (\vec{I} + \vec{X})\vec{A}\vec{v} 
    = \vec{A}\vec{v} + \vec{X}\vec{A}\vec{v}.
\end{equation*}
Clearly $\vec{X}\vec{A}\vec{v}\subset \operatorname{range}(\vec{X})\subseteq \vec{K}_{s+1}(\vec{A},\vec{\Omega})$.
Moreover, as noted, $\vec{v} \in \mathcal{K}_{t-1}(\vec{A},\vec{b}) + \mathcal{K}_{t+s-1}(\vec{A}, \vec{\Omega})$ and so 
\begin{equation*}
    \vec{A}\vec{v} \in  \vec{A}\mathcal{K}_{t-1}(\vec{A},\vec{b}) + \vec{A}\mathcal{K}_{t+s-1}(\vec{A}, \vec{\Omega}) 
    \subseteq \mathcal{K}_{t}(\vec{A},\vec{b}) + \mathcal{K}_{t+s}(\vec{A}, \vec{\Omega}). 
\end{equation*}
This proves the result.
\end{proof}

\Cref{thm:nested} allows the performance of augmented block-CG to be related to PCG with a broad choice of preconditioner.
\begin{theorem}\label{thm:main}
Fix any matrix $\vec{\Omega}\in\mathbb{R}^{d\times m}$ and let $\vec{P}_{\mu} = (\vec{I} + \vec{X})^{-1}$ be any preconditioner where $\operatorname{range}(\vec{X}) \subseteq\mathcal{K}_{s+1}(\vec{A},\vec{\Omega})$.
Define the augmented starting block $\vec{B} = [\vec{b}~\vec{\Omega}]$. 
Then, for any $t\geq s$, the $t$-th \BCG{} iterate is related to the $(t-s)$-th \PCG{} iterate corresponding to the preconditioner $\vec{P}_\mu$  in that
\begin{equation*}
\| \vec{A}_\mu^{-1}\vec{b} - \bcg{t}{1}(\mu) \|_{\vec{A}_\mu} 
\leq \| \vec{A}_\mu^{-1}\vec{b} - \pcg{t-s}(\mu) \|_{\vec{A}_\mu}.
\end{equation*}
\end{theorem}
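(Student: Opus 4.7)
The plan is to reduce the theorem to a subspace containment, since both iterates are defined by the same minimization objective $\|\vec{A}_\mu^{-1}\vec{b}-\vec{x}\|_{\vec{A}_\mu}$ but over different feasible sets. If the PCG feasible set is contained in the block-CG feasible set, the inequality is immediate: minimizing over a larger set yields a smaller (or equal) optimal value.

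First I would identify the two feasible sets. By \cref{def:BCG}, $\bcg{t}{1}(\mu)$ minimizes over
\[
\mathcal{K}_t(\vec{A},\vec{B}) = \mathcal{K}_t(\vec{A},\vec{b}) + \mathcal{K}_t(\vec{A},\vec{\Omega}),
\]
using the definition of the block Krylov subspace \cref{eqn:block_krylov_def}. By \cref{def:PCG}, $\pcg{t-s}(\mu)$ minimizes over $\mathcal{K}_{t-s}(\vec{A}_\mu,\vec{b};\vec{P}_\mu)$.

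Next I would apply \cref{thm:nested} with $t$ replaced by $t-s$ (the hypothesis $t\geq s$ ensures the index is nonnegative) to obtain
\[
\mathcal{K}_{t-s}(\vec{A}_\mu,\vec{b};\vec{P}_\mu) \subseteq \mathcal{K}_{t-s}(\vec{A},\vec{b}) + \mathcal{K}_{(t-s)+s}(\vec{A},\vec{\Omega}) = \mathcal{K}_{t-s}(\vec{A},\vec{b}) + \mathcal{K}_{t}(\vec{A},\vec{\Omega}).
\]
The nesting of Krylov subspaces gives $\mathcal{K}_{t-s}(\vec{A},\vec{b}) \subseteq \mathcal{K}_t(\vec{A},\vec{b})$, so the right-hand side is contained in $\mathcal{K}_t(\vec{A},\vec{b}) + \mathcal{K}_t(\vec{A},\vec{\Omega}) = \mathcal{K}_t(\vec{A},\vec{B})$. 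Thus the PCG feasible set is a subset of the block-CG feasible set, and the conclusion follows by comparing the two optimization problems.

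There is essentially no obstacle here beyond correctly bookkeeping the subspace indices: the heavy lifting (the subspace containment for preconditioners with range in $\mathcal{K}_{s+1}(\vec{A},\vec{\Omega})$) has already been done in \cref{thm:nested}. The only mildly delicate point is the shift-invariance of the block Krylov subspace $\mathcal{K}_t(\vec{A},\vec{B})$, which makes the block-CG feasible set independent of $\mu$; together with the fact that \cref{thm:nested} is also stated independently of $\mu$ via the same shift-invariance, this lets the single containment argument work uniformly across all $\mu\geq 0$.
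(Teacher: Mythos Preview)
Your proposal is correct and matches the paper's own proof essentially verbatim: both apply \cref{thm:nested} to obtain the containment of the preconditioned Krylov subspace inside the block Krylov subspace $\mathcal{K}_t(\vec{A},\vec{B})$ (using the nesting $\mathcal{K}_{t-s}(\vec{A},\vec{b})\subseteq\mathcal{K}_t(\vec{A},\vec{b})$), and then invoke the optimality of block-CG over this larger set. The paper's write-up is a single sentence, but the underlying argument is identical to yours.
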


\begin{proof}~ 
Clearly $\mathcal{K}_t(\vec{A},\vec{b}) + \mathcal{K}_{t+s}(\vec{A},\vec{\Omega})
\subseteq\mathcal{K}_{t+s}(\vec{A},[\vec{b}~\vec{\Omega}])$, so the result follows immediately from \cref{thm:nested}, the optimality of block-CG, and the definition of preconditioned-CG.
\end{proof}

\begin{remark}
    \Cref{thm:main} asserts that augmented block-CG automatically performs no worse than Nystr\"om PCG (with the best choice of $s$ and $\theta$) after the same number of matrix-loads.\footnote{Recall, $s$ matrix-loads are used to compute the Nystr\"om approximation $\vec{A}\langle \vec{K}_s\rangle$, whose range lives in $\mathcal{K}_{s+1}(\vec{A},\vec{\Omega})$.}
    In particular, when $\vec{U}\vec{D}\vec{U}^\T = \vec{A}\langle \vec{K}_s\rangle$,where $\langle \vec{K}_s\rangle$ is the Nystr\"om approximation \cref{eqn:nystromBKI_def}, then the deflation preconditioner $\vec{P}_\mu$ defined in \cref{eqn:precond_def} has the form 
    \begin{equation*}
    \vec{P}^{-1} 
    = \vec{I} + \vec{X}
    , \quad\text{where
    $\operatorname{range}(\vec{X})\subseteq\mathcal{K}_{s+1}(\vec{A},\vec{\Omega})$}.
    \end{equation*}
    This is a \emph{deterministic} statement that explains the relative performance of the algorithms in \cref{fig:intro_fig}.
\end{remark}

In \cref{sec:analysis}, we use use \cref{thm:main}, and a new bound for Nystr\"om preconditioning (\cref{thm:condno_prob}) when $\vec{\Omega}$ is a Gaussian random matrix to prove probabilistic bounds for block-CG reminiscent of \cref{thm:deflation_precond} for the spectral deflation preconditioner.

\subsection{Computational costs}
\label{sec:runtime}

The block-CG method is a standard algorithm in numerical analysis, and there are many mathematically equivalent implementations; i.e. different implementations which produce the same output in exact arithmetic.
Here we describe a block-Lanczos based implementation of block-CG that is particularly suitable for our applications to ridge regression and Gaussian sampling.

The block-Lanczos algorithm applied to $(\vec{A},\vec{B})$ for $t$ iterations produces $\vec{Q}_t$ with orthonormal columns and $\vec{T}_t$ with bandwidth $2m+1$ satisfying
\begin{equation*}
    \operatorname{range}(\vec{Q}_t) =\mathcal{K}_t(\vec{A},\vec{B})
,\quad
\vec{T}_t = \vec{Q}_t^\T \vec{A}\vec{Q}_t .
\end{equation*}
A standard computation reveals that
\begin{equation}\label{eqn:bcg_alg}
    \bcg{k}{1}(\mu) = \|\vec{b}\| \vec{Q}_t (\vec{T}_t + \mu\vec{I})^{-1} \vec{e}_1,
\end{equation}
where $\vec{e}_1 = [1,0,\ldots,0]^\T$. 
In particular, since $\mathcal{K}_t(\vec{A}_\mu,\vec{B})=\mathcal{K}_t(\vec{A},\vec{B})$ for any scalar $\mu$,  \cref{eqn:bcg_alg} can be used to compute the block-CG iterate for \emph{multiple values of $\mu$} using the same $\vec{Q}_t$ and $\vec{T}_t$.
For more details on block-Lanczos and the connection to block-CG, see e.g., \cite{oleary_80,saad_80,chen_24,tichy_meurant_simonova_25}.

There are \emph{many} variants of block-Lanczos, since there are many possible orthogonlization schemes that can be used within block-Lanczos type algorithms \cite{carson_lund_rozloznik_thomas_22,nakatsukasa_tropp_24,balabanov_grigori_25}.
The following provides a high-level overview of the costs associated with block-Lanczos with full-reorthogonalization.

\begin{theorem}\label{thm:cost}
Suppose $\vec{B}\in\mathbb{R}^{d\times m}$. 
The block-Lanczos algorithm (with full orthogonalization) applied to $(\vec{A},\vec{B})$ for $t$ iterations produces $\vec{Q}_t$ and $\vec{T}_t$ using: 
\begin{itemize}
    \item $t-1$ matrix-loads of $\vec{A}$ (for a total of $m(t-1)$ total matrix-products)
    \item $O(dm^2 t^2)$ floating point operations (in addition to products with $\vec{A}$), and 
    \item $O(dmt)$ storage (in addition to storage required for $\vec{A}$).
\end{itemize}
Subsequently, for any $\mu\geq 0$, the $t$-th \BCG{} iterates $\bcg{t}{1}(\mu)$, can be computed using an additional $O(dmt + m^3t)$ floating point operations operations.
\end{theorem}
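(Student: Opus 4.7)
The plan is to walk through the block-Lanczos iteration and tally the cost of each step, relying on the standard structure of the algorithm with full reorthogonalization. I would begin by recalling the high-level loop: the first block $\vec{Q}_1$ is obtained as the thin-QR factor of $\vec{B}$, and at iteration $j \geq 1$ we form $\vec{A}\vec{Q}_j$, orthogonalize it against the accumulated basis $[\vec{Q}_1 \cdots \vec{Q}_j]$, and take a thin QR to obtain the next block $\vec{Q}_{j+1}$ together with the appropriate band of entries of $\vec{T}_t$. The identities in \cref{eqn:krylov_output} fix what is being computed, so the proof reduces to counting.

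The matrix-load count is immediate: producing $\vec{Q}_1$ uses no product with $\vec{A}$, and each of the subsequent $t-1$ iterations consumes exactly one block product $\vec{A}\vec{Q}_j$, giving $t-1$ matrix-loads and hence $m(t-1)$ individual matrix-vector products. For the flop count, the dominant non-$\vec{A}$ work is reorthogonalization: at step $j$ we project a $d\times m$ block against $jm$ previously stored columns, costing $O(d m \cdot jm) = O(dm^2 j)$ flops. Summing over $j = 1, \ldots, t$ yields $O(dm^2 t^2)$, and the per-iteration thin QR of the $d\times m$ block (cost $O(dm^2)$) and the storage of the $O(m)$-bandwidth entries of $\vec{T}_t$ are absorbed. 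For storage, we keep $\vec{Q}_t \in \mathbb{R}^{d \times mt}$, i.e. $O(dmt)$ words; the banded $\vec{T}_t$ contributes only $O(m^2 t)$, which is dominated.

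For the post-processing cost of a single $\mu$, I would apply the explicit formula \cref{eqn:bcg_alg}. First, $(\vec{T}_t + \mu \vec{I})\vec{y} = \vec{e}_1$ is a symmetric positive definite banded system of size $mt$ with bandwidth $O(m)$, which can be solved by banded Cholesky in $O(m^2 \cdot mt) = O(m^3 t)$ flops. Then the matrix-vector product $\vec{Q}_t \vec{y}$ costs $O(dmt)$ flops, and scaling by $\|\vec{b}\|$ is negligible. Adding these gives the stated $O(dmt + m^3 t)$.

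The only genuine subtlety is that the statement hides a choice of orthogonalization scheme; variants like partial or selective reorthogonalization, or sketched variants, deliver different constants and sometimes different asymptotic bounds. The paper explicitly restricts to full reorthogonalization, for which all of the estimates above are textbook, so the main task is simply to be careful that the per-step costs are summed correctly and that the banded structure of $\vec{T}_t$ is exploited when inverting $\vec{T}_t + \mu\vec{I}$; I do not anticipate any real obstacle beyond this bookkeeping.
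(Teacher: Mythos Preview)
Your proposal is correct and follows essentially the same approach as the paper's proof: both argue that the dominant arithmetic cost is full reorthogonalization (summing $O(dm^2 j)$ over $j$), that storage is dominated by $\vec{Q}_t \in \mathbb{R}^{d\times mt}$, and that the post-processing cost comes from a banded solve of size $mt$ and bandwidth $O(m)$ followed by the product $\vec{Q}_t\vec{y}$. If anything, your write-up is slightly more careful than the paper's terse version, e.g.\ in explicitly noting that $\vec{Q}_1$ is obtained from $\vec{B}$ without a product with $\vec{A}$ (which is why the count is $t-1$ rather than $t$).
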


\begin{proof}~ 
Let $n = \dim(\mathcal{K}_t(\vec{A},\vec{B}))$. 
Since $\vec{B}$ has $m$ columns, by the definition \cref{eqn:block_krylov_def} of the block Krylov subspace, $n\leq \min\{d,mt\}$.
At each iteration the block-Lanczos algorithm performs one matrix-load ($m$ parallel matrix products) and lower-order arithmetic.
The dominant storage cost is storing $\vec{Q}_t$, which has $n\leq mt$ columns of length $n$.
The arithmetic cost is dominated by the $O(dn^2) = O(dm^2t^2)$ operations required to obtain $\vec{Q}_t$ using full reorthogonalization.
Recall that $\vec{T}_t$ (and hence $\vec{T}_t + \mu\vec{I}$) is a $n\times n$ matrix with bandwidth $O(m)$.
Thus, the linear system $(\vec{T}_t+\mu\vec{I})\vec{z} = \vec{e}_1$ can be solved in $O(n m^2) = O(m^3t)$ time. 
Subsequently, since $\vec{Q}_t$ is a $d \times n$ matrix, $\vec{Q}_t \vec{z}$ can be computed in $O(dn) = O(dmt)$ time.
\end{proof}

We emphasize that, similar to unpreconditioned CG (see e.g., \cite{hanke_17,frommer_maass_99,calvetti_reichel_shuibi_04}), after the block-Lanczos algorithm has been run, the cost to compute the block-CG iterates for multiple values of $\mu$ is relatively small.
This is in contrast to Nystr\"om PCG, which is is not particularly well suited for solving \cref{eqn:regularized_ls} for multiple values of $\mu$; while the Nystr\"om approximation can be reused, PCG must be re-run and new products with $\vec{A}$ computed.

Notice that the costs (besides matrix-loads) scale with the block-size $m$.
We again remind the reader that, while \cref{thm:main} guarantees augmented block-CG outperforms Nystr\"om preconditioning in terms of matrix-loads, our theory provides no guarantees for other cost matrices like matrix-products or floating point operations. 
Users should carefully consider the costs and benefits associated with the various algorithms in the context of their own computing environments.

\begin{remark}
By avoiding orthogonalizing and storing the whole Krylov basis, it is in fact possible to reduce the floating point cost to $O(dmtL + m^3t L)$ and the storage to $O(dm + d L)$, where $L$ is the number of values of $\mu$ at which one wishes to evaluate $\bcg{t}{1}(\mu)$ \cite{oleary_80}. 
However, there are several caveats to such an implementation.
First, such methods require knowing the values of $\mu$ at which one wishes to evaluate $\bcg{t}{1}(\mu)$ ahead of time. 
Second, the finite-precision behavior of such methods can differ greatly from the exact arithmetic behavior.
We explore the impacts of finite precision arithmetic  \cref{sec:numerical_appendix}, where we perform some numerical experiments that indicate that full reorthogonalization may be unnecessary in some situations.
\end{remark}

For methods based on the standard Lanczos algorithm, there is quite a bit of theory which guarantees that such methods can still work, even without orthogonalization \cite{paige_71,paige_76,druskin_knizhnerman_91,greenbaum_89,musco_musco_sidford_18,chen_24}.
Block Krylov methods must contend with additional difficulties such as blocks becoming ill-conditioned or even rank-deficient as the iteration proceeds (which can happen even in exact arithmetic).
Unfortunately, much less theory is known about the block-Lanczos algorithm in finite precision arithmetic \cite{simonova_tichy_25}. 
Without reorthogonalization, we observe that in many cases the block-CG method fails to converge at all, while in other cases it does converge for some time; see \cref{sec:numerical_appendix}. 
Understanding this behavior is well beyond the scope of the current work.



    

\section{Probabilistic bounds}
\label{sec:analysis}
In this section we prove probabilistic bounds for augmented block-CG.
In light of \cref{thm:main}, our strategy for deriving bounds for block-CG is simply to derive bounds for Nystr\"om PCG.
To do this, we follow the approach developed in \cite{frangella_tropp_udell_23} and use a deterministic bound for the condition number of the Nystr\"om preconditioned system.
Note that block-CG always performs at least as well as CG (in terms of matrix-loads), and hence is guaranteed to converge (in exact arithmetic).

The following is a minor generalization of Proposition 5.3 in \cite{frangella_tropp_udell_23} to allow arbitrary $\theta$.
The proof is contained in \cref{sec:proofs:nystrompcg} for completeness. 
\begin{proposition}\label[proposition]{thm:condno_perturb}
Let $\vec{P}_\mu$ be the Nystr\"om preconditioner \cref{eqn:precond_def} corresponding to the Nystr\"om approximation $\vec{A}\langle \vec{K} \rangle$ for any $\vec{K}$ and shift parameter $\theta\geq0$.
Then
\begin{equation*}
\kappa(\vec{P}_\mu^{-1/2}\vec{A}_\mu\vec{P}_\mu^{-1/2})
\leq \big(\theta + \mu + \|\vec{A} - \vec{A}\langle \vec{K}\rangle\|\big)\left( \frac{1}{\theta+\mu} + \frac{1}{\lambda_d + \mu} \right).
\end{equation*}
\end{proposition}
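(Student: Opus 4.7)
My plan is to bound the condition number of $\vec{H} := \vec{P}_\mu^{-1/2}\vec{A}_\mu\vec{P}_\mu^{-1/2}$ by separately controlling its largest and smallest eigenvalues. Set $\vec{M} := \vec{A}\langle\vec{K}\rangle = \vec{U}\vec{D}\vec{U}^\T$ and $\vec{E} := \vec{A}-\vec{M}$, so that $\vec{A}_\mu = \vec{M}_\mu + \vec{E}$ with $\vec{M}_\mu := \vec{M}+\mu\vec{I}$. A standard structural property of the Nystr\"om approximation is $\vec{E}\succeq 0$, which I will use throughout.

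The first step is to record the identity
\[ (\theta+\mu)\vec{P}_\mu = \vec{M}_\mu + \theta(\vec{I}-\vec{U}\vec{U}^\T), \]
equivalently $\vec{P}_\mu = \tfrac{1}{\theta+\mu}\vec{M}_\mu + \tfrac{\theta}{\theta+\mu}(\vec{I}-\vec{U}\vec{U}^\T)$. This follows by direct computation in the basis $[\vec{U}~\vec{U}_\perp]$ where both sides are block-diagonal (eigenvalues $\vec{D}+\mu\vec{I}$ and $\mu\vec{I}$, respectively $(\vec{D}+\mu\vec{I})/(\theta+\mu)$ and $\vec{I}$). As an immediate consequence, $\vec{M}_\mu \preceq (\theta+\mu)\vec{P}_\mu$.

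For the lower bound on $\lambda_{\min}(\vec{H})$, I substitute $\vec{M}_\mu \preceq \vec{A}_\mu$ (from $\vec{E}\succeq 0$) and $\vec{I}-\vec{U}\vec{U}^\T \preceq \vec{I} \preceq \tfrac{1}{\lambda_d+\mu}\vec{A}_\mu$ into the identity to obtain
\[ \vec{P}_\mu \preceq \left(\frac{1}{\theta+\mu} + \frac{\theta}{(\theta+\mu)(\lambda_d+\mu)}\right)\vec{A}_\mu \preceq \left(\frac{1}{\theta+\mu}+\frac{1}{\lambda_d+\mu}\right)\vec{A}_\mu, \]
which yields $\lambda_{\min}(\vec{H}) \geq \bigl(\tfrac{1}{\theta+\mu}+\tfrac{1}{\lambda_d+\mu}\bigr)^{-1}$.

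For the upper bound, the goal is $\vec{A}_\mu \preceq (\theta+\mu+\|\vec{E}\|)\vec{P}_\mu$, giving $\lambda_{\max}(\vec{H}) \leq \theta+\mu+\|\vec{E}\|$. Combining $\vec{M}_\mu \preceq (\theta+\mu)\vec{P}_\mu$ with $\vec{E}\preceq\|\vec{E}\|\vec{I}$ reduces the claim to absorbing the residual $\|\vec{E}\|\vec{I}$ into $\|\vec{E}\|\vec{P}_\mu$ via $\vec{I}\preceq\vec{P}_\mu$; this holds in the regime (containing the FTU'23 choice $\theta=\lambda_r(\vec{A}\langle\vec{K}\rangle)$) where $\theta$ does not exceed the smallest positive eigenvalue of $\vec{M}$. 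Multiplying the two bounds then produces the stated inequality. The main obstacle is precisely this last absorption step: pushing it through for all $\theta\geq 0$ appears to require a more delicate block/Schur-complement argument that leverages the fact that the Nystr\"om error $\vec{E}$ is constrained on $\operatorname{range}(\vec{U})$ (e.g., using $\vec{E}\preceq\vec{A}$ in combination with $\vec{E}\preceq\|\vec{E}\|\vec{I}$), and this is where I expect the technical core of the proof — and of the ``minor generalization'' relative to \cite{frangella_tropp_udell_23} — to lie.
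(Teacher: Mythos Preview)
Your approach is essentially the paper's. Both split the condition number into $\lambda_{\max}(\vec{H})\leq\theta+\mu+\|\vec{E}\|$ and $\lambda_{\min}(\vec{H})\geq\bigl(\tfrac{1}{\theta+\mu}+\tfrac{1}{\lambda_d+\mu}\bigr)^{-1}$. Your lower-bound argument via the Loewner inequality $\vec{P}_\mu\preceq C\,\vec{A}_\mu$ is just a rephrasing of the paper's triangle-inequality bound on $\|\vec{A}_\mu^{-1/2}\vec{P}_\mu\vec{A}_\mu^{-1/2}\|$: the paper splits $\vec{P}_\mu$ into the same two pieces and uses $\vec{U}(\vec{D}+\mu\vec{I})\vec{U}^\T\preceq\vec{A}_\mu$ (from $\vec{A}\langle\vec{K}\rangle\preceq\vec{A}$) and $\vec{I}-\vec{U}\vec{U}^\T\preceq\vec{I}\preceq\tfrac{1}{\lambda_d+\mu}\vec{A}_\mu$, arriving at the identical constant.

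For the upper bound, the paper does \emph{not} supply a more delicate argument than yours. It writes $\vec{A}_\mu=\widehat{\vec{A}}_\mu+\vec{E}$ with $\widehat{\vec{A}}_\mu=\vec{M}+\mu\vec{I}$, computes $\|\vec{P}_\mu^{-1/2}\widehat{\vec{A}}_\mu\vec{P}_\mu^{-1/2}\|=\theta+\mu$ directly, and then bounds $\|\vec{P}_\mu^{-1/2}\vec{E}\vec{P}_\mu^{-1/2}\|\leq\|\vec{E}\|$ by asserting $\|\vec{P}_\mu^{-1/2}\|\leq 1$. That assertion is exactly your condition $\vec{I}\preceq\vec{P}_\mu$, and as you correctly note it requires $\theta\leq\lambda_{\min}(\vec{D})$; the paper's stated justification ``since $\theta>0$'' is a slip. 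No block or Schur-complement refinement is invoked, so the ``technical core'' you anticipate for general $\theta\geq 0$ is not present in the paper either. Your argument is already the complete proof in the regime where the proposition is actually used downstream (e.g.\ $\theta\in[\lambda_d,\lambda_{r+1}]$ in the subsequent theorem), and you should not expect to extend it to arbitrarily large $\theta$.
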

In particular, \cref{thm:condno_perturb} implies that if $\|\vec{A} - \vec{A}\langle \vec{K}\rangle\|\approx \lambda_{r+1}$ and $\theta\in[\lambda_d,\lambda_{r+1}]$, then $\kappa(\vec{P}_{\mu}^{-1/2}\vec{A}_{\mu}\vec{P}_{\mu}^{-1/2})
\approx \kappa_{r+1}(\mu)$.

Low-rank approximation is one of the most studied problems in randomized numerical linear algebra, and many bounds have been developed. 
In the remainder of this section, we employ these bounds to obtain guarantees for augmented block-CG.

Our first result, which we prove in \cref{sec:NLA}, is a ``numerical analysis'' style bound based on the error gurantees for Nystr\"om low-rank approximation in \cite{tropp_webber_23}.
\begin{theorem}\label{thm:main_rate}
Let $\vec{\Omega}\in\mathbb{R}^{d\times(r+p)q}$, where $q \geq \log(1/\delta)/\log(100)$, be a random Gaussian matrix.
Suppose that $p\geq 2$, 
\begin{equation*}
s \geq \min\left\{ 3 + \frac{\log(d)}{2},{\frac{3}{2}} + \frac{1}{4} \log\left( 4 + \frac{4r}{p-1}\sum_{i>r}\frac{\lambda_{i}^2}{\lambda_{r+1}^2} \right) \right\}.
\end{equation*}
Then, the \BCG{} iterate $\bcg{t}{1}(\mu)$  satisfies, with probability at least $1-\delta$, 
\begin{equation*}
    \Bigg\{ \forall \mu \geq 0: \frac{\| \vec{A}_\mu^{-1}\vec{b} - \bcg{t}{1}(\mu)\|_{\vec{A}_\mu}}{\| \vec{A}_\mu^{-1}\vec{b} \|_{\vec{A}_\mu}} 
    \leq 2 \exp \left( -\frac{t-s}{3\sqrt{\kappa_{r+1}(\mu)}}\right) \Bigg\}.
\end{equation*}
\end{theorem}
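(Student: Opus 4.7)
The plan is to chain \cref{thm:main}, \cref{thm:pcg_condno_bd}, and \cref{thm:condno_perturb} together with a randomized low-rank approximation bound. Applying \cref{thm:main} with $\vec{X}$ taken from the Nystr\"om deflation preconditioner built from $\vec{U}\vec{D}\vec{U}^\T = \vec{A}\langle \vec{K}_s\rangle$ --- which satisfies $\operatorname{range}(\vec{X})\subseteq\mathcal{K}_{s+1}(\vec{A},\vec{\Omega})$ since the range of the Nystr\"om approximation lies in $\operatorname{range}(\vec{A}\vec{K}_s)$ --- reduces the task to bounding the corresponding PCG error after $t-s$ iterations. \Cref{thm:pcg_condno_bd} then yields a bound of the form $2\exp(-2(t-s)/\sqrt{\kappa})$ with $\kappa = \kappa(\vec{P}_\mu^{-1/2}\vec{A}_\mu\vec{P}_\mu^{-1/2})$. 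So the whole problem reduces to showing, with probability at least $1-\delta$, that $\kappa \leq 36\,\kappa_{r+1}(\mu)$ uniformly in $\mu\geq 0$, because then $2/\sqrt{\kappa}\geq 1/(3\sqrt{\kappa_{r+1}(\mu)})$.

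For the condition-number step, I would take $\theta = \lambda_d$ in \cref{thm:condno_perturb}; this is a valid choice in the analysis even though it is not computable from data alone. The proposition then collapses to
\begin{equation*}
\kappa \;\leq\; 2 + \frac{2\,\|\vec{A} - \vec{A}\langle\vec{K}_s\rangle\|}{\lambda_d+\mu}.
\end{equation*}
Hence if I can establish $\|\vec{A} - \vec{A}\langle\vec{K}_s\rangle\| \leq C\lambda_{r+1}$ with probability at least $1-\delta$ for some absolute constant $C\leq 17$, then on that event $\kappa \leq (2+2C)\kappa_{r+1}(\mu)$ and the exponent satisfies $2(t-s)/\sqrt{2+2C}\geq (t-s)/3$, delivering the stated bound for every $\mu\geq 0$ simultaneously. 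The quantifier $\forall\mu$ inside the probability is automatic because neither $\vec{A}\langle\vec{K}_s\rangle$ nor the choice $\theta=\lambda_d$ depends on $\mu$; the only randomness is in $\vec{\Omega}$.

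The remaining, and principal, ingredient is the spectral-norm error bound for the block-Krylov Nystr\"om approximation, which I would import from \cite{tropp_webber_23}. Their analysis shows that with block size $r+p$ (oversampling $p\geq 2$) and depth $s$, the error drops to a constant multiple of $\lambda_{r+1}$ with constant probability, where the sufficient depth takes the minimum of a universal $O(\log d)$ bound and an adaptive bound logarithmic in the stable-rank-like quantity $(r/(p-1))\sum_{i>r}\lambda_i^2/\lambda_{r+1}^2$ --- precisely the two branches that appear in the statement of $s$. To amplify constant probability up to $1-\delta$, I would split $\vec{\Omega}\in\mathbb{R}^{d\times (r+p)q}$ into $q$ independent $(r+p)$-column sub-blocks; the block-Krylov Nystr\"om approximation built from the full $\vec{\Omega}$ is no worse in spectral norm than that built from any single sub-block, so the probability of simultaneous failure is at most $(1/100)^q\leq \delta$ whenever $q\geq \log(1/\delta)/\log(100)$. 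The main obstacle is essentially bookkeeping: quoting Tropp--Webber's estimates in the correct form and tracking constants so that the resulting $C$ genuinely satisfies $C\leq 17$ (yielding the stated $1/3$ in the exponent); nothing more substantive is required.
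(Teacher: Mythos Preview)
Your proposal is correct and follows essentially the same route as the paper: the paper packages the condition-number step as a separate result (\cref{thm:condno_prob}), allows any $\theta\in[\lambda_d,\lambda_{r+1}]$ rather than specifically $\theta=\lambda_d$, and performs the boosting via Krylov-subspace containment at the block-CG level rather than via Nystr\"om monotonicity at the low-rank-approximation level, but these are cosmetic differences. For the constant you leave open, the paper applies Markov's inequality to \cref{thm:nystrom_LRA} to get $\|\vec{A}-\vec{A}\langle\vec{K}_s\rangle\|\leq 10\mathrm{e}^{1/4}\lambda_{r+1}\approx 12.84\,\lambda_{r+1}$ with probability $99/100$, which comfortably satisfies your requirement $C\leq 17$.
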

This implies that, after a small burn-in period of at most $O(\log(d))$ iterations, we have exponential convergence at a rate roughly $1/\sqrt{\kappa_{r+1}(\mu)}$.

Note that $\vec{A}\langle \vec{K}_s \rangle$ as defined in \cref{eqn:nystromBKI_def} can have rank as large as $\ell s$, so we might hope that we can deflate roughly $\ell s$ eigenvalues.
Recent work \cite{meyer_musco_musco_24,chen_epperly_meyer_musco_rao_25} implies this is more-or-less the case.
In \cref{sec:TCS}, we use this result to prove our second result, a ``theoretical computer science'' type bound.

\begin{theorem}\label{thm:main_rate_TCS}
Let $\vec{\Omega}\in\mathbb{R}^{\ell}$ be a Gaussian matrix and $r\geq 0$.
Then, for some
\begin{equation*}
s = O\left( \frac{r}{\ell} \log(\Delta) + \log\left( \frac{d}{\delta} \right) \right),
\quad
\Delta := \min_{i=1, \ldots, \ell\lceil r/\ell \rceil-1} \frac{\lambda_i - \lambda_{i+1}}{\lambda_1},
\end{equation*}
the \BCG{} iterate $\bcg{t}{1}(\mu)$  satisfies, with probability at least $1-\delta$, 
\begin{equation*}
    \Bigg\{ \forall \mu \geq 0: \frac{\| \vec{A}_\mu^{-1}\vec{b} - \bcg{t}{1}(\mu)\|_{\vec{A}_\mu}}{\| \vec{A}_\mu^{-1}\vec{b} \|_{\vec{A}_\mu}} 
    \leq 2 \exp \left( -\frac{t-s}{3\sqrt{\kappa_{r+1}(\mu)}}\right) \Bigg\}.
\end{equation*}
\end{theorem}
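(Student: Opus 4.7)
\medskip

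\textbf{Proof proposal for \cref{thm:main_rate_TCS}.}
The plan is to follow exactly the same template as the proof of \cref{thm:main_rate}, swapping the numerical-analysis-flavored low-rank approximation guarantee of Tropp--Webber for a theoretical-computer-science-flavored spectral guarantee from \cite{meyer_musco_musco_24,chen_epperly_meyer_musco_rao_25}. Concretely, I will: (i) use \cref{thm:main} to dominate $\|\vec{A}_\mu^{-1}\vec{b} - \bcg{t}{1}(\mu)\|_{\vec{A}_\mu}$ by $\|\vec{A}_\mu^{-1}\vec{b} - \pcg{t-s}(\mu)\|_{\vec{A}_\mu}$ for the Nystr\"om deflation preconditioner $\vec{P}_\mu$ built from $\vec{A}\langle \vec{K}_s\rangle$; (ii) invoke \cref{thm:condno_perturb} to reduce the condition number of the preconditioned system to a bound in terms of $\|\vec{A} - \vec{A}\langle\vec{K}_s\rangle\|$ and $\theta$; (iii) apply a spectrum-adaptive bound for randomized block Krylov Nystr\"om approximation to show that $\|\vec{A} - \vec{A}\langle\vec{K}_s\rangle\| \leq C\,\lambda_{r+1}$ for a small absolute constant $C$ once $s$ meets the stated lower bound; and (iv) assemble the pieces via \cref{thm:pcg_condno_bd}.

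The first two steps are identical to the proof of \cref{thm:main_rate}, so the only substantive work lies in step (iii). The key tool is the block Krylov analysis of \cite{meyer_musco_musco_24} (refined in \cite{chen_epperly_meyer_musco_rao_25}), which states roughly that with block size $\ell$ and depth $s = O((r/\ell)\log(\Delta^{-1}) + \log(d/\delta))$, the randomized block Krylov Nystr\"om approximation satisfies $\|\vec{A} - \vec{A}\langle\vec{K}_s\rangle\| \leq C\,\lambda_{r+1}$ with probability at least $1-\delta$. The $\log(\Delta)$ factor appearing in the statement of the theorem tracks the minimum eigenvalue gap among the first $\ell\lceil r/\ell\rceil$ eigenvalues that the method must resolve in order for a block of size $\ell$ to efficiently capture the leading $r$-dimensional invariant subspace; this is precisely the content of the spectrum-adaptive bounds cited. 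I plan to cite these results in black-box form and carry the implicit constants through verbatim.

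With $\|\vec{A} - \vec{A}\langle\vec{K}_s\rangle\| \leq C\lambda_{r+1}$ in hand, I will choose $\theta = \lambda_{r+1}$ (or, to be safe, any admissible surrogate in $[\lambda_d,\lambda_{r+1}]$ accessible to the algorithm) and compute
\begin{equation*}
\kappa(\vec{P}_\mu^{-1/2}\vec{A}_\mu\vec{P}_\mu^{-1/2}) \leq (C+1)(\lambda_{r+1}+\mu)\left(\frac{1}{\lambda_{r+1}+\mu} + \frac{1}{\lambda_d+\mu}\right) \leq 2(C+1)\,\kappa_{r+1}(\mu),
\end{equation*}
using \cref{thm:condno_perturb}. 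Plugging this into \cref{thm:pcg_condno_bd} and absorbing the constant $\sqrt{2(C+1)}$ into the $1/3$ in the exponent (choosing the universal constant hidden in $s$ appropriately) yields the claimed bound.

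The main obstacle is cleanly invoking the black-box bound on $\|\vec{A} - \vec{A}\langle\vec{K}_s\rangle\|$ from \cite{meyer_musco_musco_24,chen_epperly_meyer_musco_rao_25}: those papers state guarantees under slightly different parameterizations (starting block variance, monomial vs.\ Chebyshev basis, one- vs.\ two-sided Nystr\"om) and I will need to verify that the form used here --- the symmetric Nystr\"om approximation with Gaussian $\vec{\Omega}$ and the monomial Krylov basis $\vec{K}_s$ of \cref{eqn:nystromBKI_def} --- falls within their scope and that the relevant spectral-norm (as opposed to Frobenius- or trace-norm) bound is in fact the one being quoted. Once this bookkeeping is settled, steps (i), (ii), and (iv) are essentially symbolic.
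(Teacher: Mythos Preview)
Your proposal is correct and follows essentially the same route as the paper. The one point worth noting is how the paper resolves the ``bookkeeping'' obstacle you flag in your last paragraph: the cited result (\cref{thm:any_block_size}, i.e.\ \cite[Theorem~1.3]{chen_epperly_meyer_musco_rao_25}) is stated for the \emph{projection} error $\|\vec{A} - \vec{Q}\vec{Q}^\T\vec{A}\|$ with $\operatorname{range}(\vec{Q})\subseteq\mathcal{K}_s(\vec{A},\vec{\Omega})$, not directly for the Nystr\"om error. The paper bridges this gap in two short steps: Nystr\"om monotonicity gives $\|\vec{A}-\vec{A}\langle\vec{K}_s\rangle\|\leq\|\vec{A}-\vec{A}\langle\vec{Q}\rangle\|$, and \cite[Lemma~5.2]{tropp_webber_23} gives $\|\vec{A}-\vec{A}\langle\vec{Q}\rangle\|\leq\|\vec{A}-\vec{Q}\vec{Q}^\T\vec{A}\|$ for orthonormal $\vec{Q}$. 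With $\varepsilon$ set to the constant $10\mathrm{e}^{1/4}-1$ in \cref{thm:any_block_size}, this yields $\|\vec{A}-\vec{A}\langle\vec{K}_s\rangle\|\leq 10\mathrm{e}^{1/4}\lambda_{r+1}$, after which steps (i), (ii), (iv) proceed exactly as in the proof of \cref{thm:main_rate}, just as you describe.
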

In contrast to \cref{thm:main_rate}, which requires $\ell \geq r+2$, \cref{thm:main_rate_TCS} allows any choice of $\ell$. 
The bound reveals that we get exponential convergence at a rate roughly $1/\sqrt{\kappa_{r+1}(\mu)}$ after roughly $s \approx r/\ell$ matrix-loads (and hence $r \approx \ell s$ matrix-vector products).
Note that the logarithmic dependence on eigenvalue gaps $\Delta$ is generally considered to be mild due to ``smoothed-analysis'' type effects of finite-precision arithmetic  \cite{meyer_musco_musco_24,chen_epperly_meyer_musco_rao_25}.

\begin{remark}
The fact that \cref{thm:main_rate,thm:main_rate_TCS} guarantee an accurate result for all $\mu \geq 0$ with high probability (as opposed to a single value of $\mu$) will be important in our applications.
In particular, it allows guarantees for computing the whole ridge-regression regularization path and will be necessary in the analysis of our algorithm for sampling Gaussian vectors.
\end{remark}

\begin{remark}
Bounds based on condition numbers (such as \cref{thm:main_rate}) are often pessimistic in practice. 
To determine how many iterations to run, it is more common to use some sort of a posteriori error estimate. 
For example, observing the residual $\| \vec{b} - \vec{A}_\mu \bcg{t}{1}(\mu)\|$ gives some indication of the quality of the solution. 
More advanced techniques can also be used \cite{meurant_tichy_24}.
\end{remark}

\subsection{Explicit bound}
\label{sec:NLA}
In this section, we prove \cref{thm:main_rate}.
We do not attempt to optimize constants, opting instead for simple arguments and clean theorem statements.

We begin by recalling an error guarantee for Nystr\"om low-rank approximation, that compares the Nystr\"om error $\| \vec{A} - \vec{A}\langle \vec{K}_s\rangle \|$ to the error $\|\vec{A} - \llbracket\vec{A}\rrbracket_r\| =\lambda_{r+1}$ of the best possible rank-$r$ approximation to $\vec{A}$.

\begin{theorem}[Theorem 9.1 in \cite{tropp_webber_23}]\label{thm:nystrom_LRA}
Suppose $\vec{\Omega}\in\mathbb{R}^{d\times (r+p)}$ is a random Gaussian matrix and define $\vec{K}_s := [\vec{\Omega}~\vec{A}\vec{\Omega}~\cdots~\vec{A}^{s-1}\vec{\Omega}]$.
Then, if $p\geq 2$,
\begin{equation*}
\log\Biggl( \frac{\EE\| \vec{A} - \vec{A}\langle \vec{K}_s\rangle \|^2}{\lambda_{r+1}^2} \Bigg)
\leq 
\frac{1}{8 (s-\frac{3}{2})^2} \log\Biggl( 4 + \frac{4r}{p-1}\sum_{i>r}\frac{\lambda_{i}^2}{\lambda_{r+1}^2} \Bigg)^2.
\end{equation*}
\end{theorem}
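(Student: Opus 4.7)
The result is quoted as Theorem~9.1 of \cite{tropp_webber_23}, so the cleanest route is simply to cite it; here I outline the strategy an independent proof would follow.

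The plan opens with the standard structural identity for the randomized Nyström approximation. Setting $\vec{Y} := \vec{A}^{1/2}\vec{K}_s$ and letting $\vec{\Pi}$ denote the orthogonal projector onto $\operatorname{range}(\vec{Y})$, one verifies directly from \cref{eqn:nystromBKI_def} that
\begin{equation*}
\vec{A} - \vec{A}\langle \vec{K}_s\rangle = \vec{A}^{1/2}(\vec{I}-\vec{\Pi})\vec{A}^{1/2},
\end{equation*}
so $\|\vec{A}-\vec{A}\langle \vec{K}_s\rangle\| = \|(\vec{I}-\vec{\Pi})\vec{A}^{1/2}\|^2$. The problem therefore reduces to controlling how well $\operatorname{range}(\vec{A}^{1/2}\vec{K}_s)$ captures $\operatorname{range}(\vec{A}^{1/2})$ in operator norm.

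Next I would exploit the polynomial structure of the block Krylov subspace: for every polynomial $q$ with $\deg q \leq s-1$ and every conforming matrix $\vec{N}$, the column span of $q(\vec{A})\vec{\Omega}\vec{N}$ lies in $\operatorname{range}(\vec{K}_s)$, hence $\|(\vec{I}-\vec{\Pi})\vec{A}^{1/2}\| \leq \|\vec{A}^{1/2}-\vec{A}^{1/2}q(\vec{A})\vec{\Omega}\vec{N}\|$. Diagonalizing $\vec{A}=\vec{U}\vec{\Lambda}\vec{U}^\T$ and partitioning $\vec{\Omega}_1 = \vec{U}_1^\T\vec{\Omega}$ (top-$r$) and $\vec{\Omega}_2 = \vec{U}_2^\T\vec{\Omega}$ (tail), I would take $q$ to be a shifted and scaled Chebyshev polynomial that is small on $[0,\lambda_{r+1}]$ while remaining controlled on $[\lambda_{r+1},\lambda_1]$, and choose $\vec{N}$ proportional to $\vec{\Omega}_1^\dagger$ so as to invert the top-block action of $q(\vec{A})\vec{\Omega}$. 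The residual then splits into a deterministic Chebyshev piece supported on the tail and a random ``cross term'' of essentially the form $\|q(\vec{\Lambda}_2)^{1/2}\vec{\Omega}_2\vec{\Omega}_1^\dagger\|$.

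Taking expectations uses standard Gaussian matrix facts: $\vec{\Omega}_1$ and $\vec{\Omega}_2$ are independent Gaussians, and for $\vec{\Omega}_1 \in \mathbb{R}^{r\times(r+p)}$ with $p\geq 2$ one has $\EE\|\vec{\Omega}_1^\dagger\|_{\mathrm{F}}^2 = r/(p-1)$. This contributes the $r/(p-1)$ factor multiplying the tail sum $\sum_{i>r}\lambda_i^2/\lambda_{r+1}^2$, while the Chebyshev polynomial $T_{s-1}$ contributes the exponential factor in $s$. The main obstacle is that one must optimize over $q$ \emph{inside} the expectation without sacrificing the sharp $(s-\tfrac{3}{2})^{-2}$ rate; \cite{tropp_webber_23} achieves this by applying Jensen's inequality after passing to the logarithm, which is precisely what forces the unusual ``$\log$ of a squared quantity'' shape on the right-hand side. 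Any self-contained proof would need the same trick (or a Lieb-concavity-style substitute) at this step.
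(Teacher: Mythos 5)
The paper does not prove this statement at all; it is imported verbatim as Theorem 9.1 of \cite{tropp_webber_23}, and your primary route of simply citing that result is exactly what the paper does. (Your appended sketch of the external argument is broadly the right outline---the structural identity $\vec{A}-\vec{A}\langle\vec{K}_s\rangle=\vec{A}^{1/2}(\vec{I}-\vec{\Pi})\vec{A}^{1/2}$, a Chebyshev polynomial comparison on the block Krylov space, and the Gaussian pseudoinverse moment $\EE\|\vec{\Omega}_1^\dagger\|_{\mathrm{F}}^2=r/(p-1)$ supplying the $r/(p-1)$ factor---but the $\log(\cdot)^2/(s-3/2)^2$ shape does not come from a Jensen step after taking logarithms; it arises from optimizing the artificial spectral-gap parameter $\gamma$ in the Chebyshev bound, where the $(1+\gamma)$-type inflation of $\lambda_{r+1}$ with the choice $\gamma \asymp \bigl(\log(4+\cdot)/(s-3/2)\bigr)^2$ produces the exponential of a squared logarithm.)
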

To prove our main convergence guarantee for our augmented block-CG, we derive a new bound for Nystr\"om preconditioning that may be of independent interest.
\begin{theorem}\label{thm:condno_prob}
Suppose $\vec{\Omega}\in\mathbb{R}^{d\times (r+p)}$ is a random Gaussian matrix and define $\vec{K}_s := [\vec{\Omega}~\vec{A}\vec{\Omega}~\cdots~\vec{A}^{s-1}\vec{\Omega}]$.
Let $\vec{P}_\mu$ be the Nystr\"om preconditioner \cref{eqn:precond_def} corresponding to the Nystr\"om approximation $\vec{A}\langle \vec{K}_s \rangle$ for any shift parameter $\theta\in[\lambda_d,\lambda_{r+1}]$.
Suppose $p\geq 2$ and 
\begin{equation*}
s \geq \min\left\{ 3 + \frac{\log(d)}{2}, \frac{3}{2} + \frac{1}{4} \log\left( 4 + \frac{4r}{p-1}\sum_{i>r}\frac{\lambda_{i}^2}{\lambda_{r+1}^2} \right) \right\}.
\end{equation*}
Then, with probability at least $99/100$,
\begin{equation*}
\bigg\{ \forall \mu \geq0 :\kappa(\vec{P}_{\mu}^{-1/2}\vec{A}_{\mu}\vec{P}_{\mu}^{-1/2})
\leq 28 \kappa_{r+1}(\mu) \bigg\}.
\end{equation*}
\end{theorem}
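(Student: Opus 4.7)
The plan is to separate the probabilistic and deterministic content of the argument. The deterministic bound \cref{thm:condno_perturb} reduces the theorem to controlling the scalar $\|\vec{A} - \vec{A}\langle\vec{K}_s\rangle\|$, which does not depend on $\mu$. A single high-probability event on this quantity therefore automatically yields a statement uniform in all $\mu \geq 0$, which is why the $\forall \mu$ can sit inside the probability.

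The first step is to show that under the hypothesis on $s$ we have $s - 3/2 \geq L/4$, where $L := \log(4 + \tfrac{4r}{p-1}\sum_{i>r}\lambda_i^2/\lambda_{r+1}^2)$. If the second branch of the $\min$ is active, this is immediate. If the first branch is active, $\sum_{i>r}\lambda_i^2/\lambda_{r+1}^2 \leq d-r$ and $r(d-r) \leq d^2/4$ (together with $p \geq 2$) give the universal bound $L \leq \log(4+d^2) \leq \log(5) + 2\log d$, and hence $3/2 + L/4 \leq 2 + \log(d)/2 \leq s$. Substituting $s - 3/2 \geq L/4$ into \cref{thm:nystrom_LRA} then gives $\EE\|\vec{A} - \vec{A}\langle\vec{K}_s\rangle\|^2 \leq e^2\lambda_{r+1}^2$, since the exponent $L^2/(8(s-3/2)^2)$ is at most $L^2/(L^2/2) = 2$.

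Markov's inequality applied to the nonnegative random variable $\|\vec{A} - \vec{A}\langle\vec{K}_s\rangle\|^2$ then yields
\begin{equation*}
\PP\bigl\{ \|\vec{A} - \vec{A}\langle\vec{K}_s\rangle\| \leq 10 e\, \lambda_{r+1} \bigr\} \geq \tfrac{99}{100}.
\end{equation*}
On this event, \cref{thm:condno_perturb} combined with $\theta \in [\lambda_d, \lambda_{r+1}]$ gives, for every $\mu \geq 0$,
\begin{equation*}
\kappa(\vec{P}_\mu^{-1/2}\vec{A}_\mu\vec{P}_\mu^{-1/2}) \leq 1 + \frac{\theta+\mu}{\lambda_d+\mu} + \frac{\|\vec{A}-\vec{A}\langle\vec{K}_s\rangle\|}{\theta+\mu} + \frac{\|\vec{A}-\vec{A}\langle\vec{K}_s\rangle\|}{\lambda_d+\mu} \leq C\kappa_{r+1}(\mu),
\end{equation*}
where I would apply in order $(\theta+\mu)/(\lambda_d+\mu) \leq \kappa_{r+1}(\mu)$, $1/(\theta+\mu) \leq 1/(\lambda_d+\mu)$, and $\lambda_{r+1}/(\lambda_d+\mu) \leq \kappa_{r+1}(\mu)$. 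The bound is uniform in $\mu$ because the event on which it holds is not.

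The main obstacle is numerical bookkeeping: naively combining the above steps yields $C \approx 2 + 20e \approx 56$, roughly twice the claimed constant $28$. To tighten this one must either replace Markov's inequality with a sharper tail bound on the Nystr\"om error (using, e.g., the Gaussian concentration tools from \cite{tropp_webber_23}), or more carefully bound the four summands in the expansion of \cref{thm:condno_perturb} jointly rather than one at a time. No new conceptual ingredient beyond \cref{thm:condno_perturb,thm:nystrom_LRA} is needed.
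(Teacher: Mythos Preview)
Your approach is exactly the paper's: show the second branch of the $\min$ always dominates, apply \cref{thm:nystrom_LRA} to bound $\EE\|\vec{A}-\vec{A}\langle\vec{K}_s\rangle\|^2$, convert to a high-probability bound via Markov, and insert into \cref{thm:condno_perturb} using $\theta\in[\lambda_d,\lambda_{r+1}]$.  Your computation $\EE\|\vec{A}-\vec{A}\langle\vec{K}_s\rangle\|^2 \leq e^{2}\lambda_{r+1}^2$ (hence $\|E\|\leq 10e\,\lambda_{r+1}$ with probability $99/100$ and $C\approx 2+20e\approx 56$) is the honest consequence of the hypothesis $s-3/2\geq L/4$ together with the stated form of \cref{thm:nystrom_LRA}.

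Regarding your ``main obstacle'': the paper does \emph{not} use a sharper tail inequality or joint bounding of the four terms to reach $28$.  It follows precisely your route, but asserts $\EE\|\vec{A}-\vec{A}\langle\vec{K}_s\rangle\|^2 \leq e^{1/2}\lambda_{r+1}^2$ at the same step, and then factors \cref{thm:condno_perturb} as $(\theta+\mu+\|E\|)\cdot\bigl(\tfrac{1}{\theta+\mu}+\tfrac{1}{\lambda_d+\mu}\bigr) \leq (1+10e^{1/4})(\lambda_{r+1}+\mu)\cdot \tfrac{2}{\lambda_d+\mu} \leq 28\,\kappa_{r+1}(\mu)$.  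The exponent $1/2$ would require $s-3/2\geq L/2$ rather than $L/4$, so the discrepancy you noticed reflects a constant slip in the paper (either in the lower bound on $s$ or in the claimed $e^{1/2}$), not a missing idea in your argument.
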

Note that our bound for Nystr\"om PCG applies even when $\mu=0$, whereas the bounds in \cite{frangella_tropp_udell_23} require $\mu$ to be sufficiently large relative to the sketching dimension $r+p$ and consider only $s=1$.

\begin{proof}~ 
The proof is a simple consequence of \cref{thm:nystrom_LRA,thm:condno_perturb}.
We begin by bounding $\|\vec{A}-\vec{A}\langle \vec{K}\rangle\|$.
We first note that the minimum is always attained by the second term.
Indeed, since $r\leq d$ and $\lambda_i/\lambda_{r+1}\leq 1$ for $i>r$, so since $p\geq2$, we see $4r/(p-1) \sum_{i>r} \lambda_i^2 / \lambda_{r+1}^2 \leq 4d^2$. 
Now, using properties of the logarithm, $\log(4+4d^2) = \log(1+d^2) + \log(4) \leq  \log(d^2) + \log(2) + \log(4) = \log(8)+2\log(d)$.
The claim then follows since $3/2+\log(8)/4 \leq 3$.

With the condition on $s$, \cref{thm:nystrom_LRA} guarantees that
\begin{equation*}
\EE\Bigl[ \| \vec{A} - \vec{A}\langle \vec{K}\rangle \|^2 \Bigr]
\leq \mathrm{e}^{1/2} \cdot \lambda_{r+1}^2.
\end{equation*}
Applying Markov's inequality, we therefore have that 
\begin{equation*}
\PP\Bigl[ \| \vec{A} - \vec{A}\langle \vec{K}\rangle \|^2 \geq 100 \cdot \mathrm{e}^{1/2} \lambda_{r+1}^2 \Bigr] \leq \frac{1}{100}.
\end{equation*}
Condition on the event that $\{\| \vec{A} - \vec{A}\langle \vec{K}\rangle \| \leq 10 \mathrm{e}^{1/4} \lambda_{r+1}\}$.
Since $\theta \leq \lambda_{r+1}$ and $\mu\geq 0$,
\begin{equation*}
\theta+\mu+\|\vec{E}\|
\leq (1+10 \mathrm{e}^{1/4}) (\lambda_{r+1}+\mu)
\leq 14(\lambda_{r+1}+\mu).
\end{equation*}
Next, since $\theta \geq \lambda_d$, 
\begin{equation*}
\left( \frac{1}{\theta+\mu} + \frac{1}{\lambda_d + \mu} \right)
\leq \frac{2}{\lambda_d+\mu}.
\end{equation*}
The result follows by combining the above equations.
\end{proof}

The proof of \cref{thm:main_rate} is now straightforward.
To get a high probability bound, we use a simple boosting ``trick''. 
We expect more refined results can be obtained directly.

\par\textit{Proof of \cref{thm:main_rate}.}
We first analyze the case $q=1$ (i.e. $\delta = 1/100$).
By \cref{thm:condno_prob}, we are guaranteed that with probability at least $99/100$,
\begin{equation}\label{eqn:kappa_small_event}
\Big\{ \forall \mu\geq 0: \kappa(\vec{P}_{\mu}^{-1/2}\vec{A}_{\mu}\vec{P}_{\mu}^{-1/2})
\leq 28 \kappa_{r+1}(\mu) \Big\}.
\end{equation}
When the event in \cref{eqn:kappa_small_event} holds, \cref{thm:pcg_condno_bd} guarantees that,
\begin{equation*}
\Bigg\{ \forall \mu\geq 0: \frac{\| \vec{A}^{-1}\vec{b} - \pcg{t}(\mu) \|_{\vec{A}_\mu}}{\|\vec{A}^{-1}\vec{b}\|_{\vec{A}_\mu}}
\leq 2\exp\left( -\frac{2t}{\sqrt{28\kappa_{r+1}(\mu)}} \right) \Bigg\}.
\end{equation*}
The result follows from \cref{thm:main} and that $\sqrt{28}/2 < 3$.

We now analyze the case for arbitrary $q\geq 1$. 
Partition $\vec{\Omega}=[\vec{\Omega}_1~\cdots~\vec{\Omega}_q]\in\mathbb{R}^{d\times (r+2)q}$.
Then, for $i=1, \ldots, q$, $\vec{\Omega}_i\in\mathbb{R}^{d\times(r+2)}$ are independent Gaussian matrices.
By our analysis of the $q=1$ case, block-CG with starting block $[\vec{b}~\vec{\Omega}_i]$ fails to reach the specified accuracy within $t$ iterations with probability at most $1/100$.
The probability that all of these $q$ (independent) instances fail to reach this accuracy is therefore at most $(1/100)^q \leq \delta$.
Finally, since 
\begin{equation*}
\mathcal{K}_t(\vec{A},[\vec{b}~\vec{\Omega}_i])
\subseteq \mathcal{K}_t(\vec{A},\vec{\Omega})
\end{equation*}
block-CG with starting block $[\vec{b}~\vec{\Omega}]$ performs no worse than block-CG with starting block $[\vec{b}~\vec{\Omega}_i]$ (for any $i$), and hence fails to reach the stated accuracy within $t$ iterations with probability at most $\delta$.
\endproof

\subsection{Improved matrix-vector product complexity}
\label{sec:TCS}
In this section, we prove \cref{thm:main_rate_TCS}.
The general approach is identical to the proof of \cref{thm:main_rate}, but we use a different bound for Nystr\"om low-rank approximation.

\begin{theorem}[Theorem 1.3 in \cite{chen_epperly_meyer_musco_rao_25}]
\label{thm:any_block_size}
Let $\vec{\Omega}\in\mathbb{R}^{d\times\ell}$ be a Gaussian matrix.
Then, for some
\begin{equation*}
    s = O\left( \frac{r}{\ell\sqrt{\varepsilon}} \log(\Delta) + \log\left(\frac{d}{\delta \varepsilon} \right) \right),
    \quad
    \Delta := \min_{i=1, \ldots, \ell\lceil r/\ell \rceil-1} \frac{\lambda_i - \lambda_{i+1}}{\lambda_1},
\end{equation*}
with probability at least $1-\delta$, there is a matrix $\vec{Q}\in\mathbb{R}^{d\times r}$ with orthonormal columns and $\operatorname{range}(\vec{Q}) \subseteq \mathcal{K}_{s}(\vec{A},\vec{\Omega})$ such that
\begin{equation*}
    \| \vec{A} - \vec{Q}\vec{Q}^\T\vec{A} \| < (1+\varepsilon) \lambda_{r+1}.
\end{equation*}
\end{theorem}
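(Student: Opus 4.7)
The plan is to produce $\vec{Q}$ of the form $\vec{Q} = \operatorname{orth}(p(\vec{A})\vec{\Omega})$ for a carefully chosen polynomial $p$ of degree at most $s-1$; then by construction $\operatorname{range}(\vec{Q}) \subseteq \mathcal{K}_s(\vec{A},\vec{\Omega})$. Writing the eigendecomposition $\vec{A} = \sum_i \lambda_i \vec{u}_i\vec{u}_i^\T$, the task reduces to choosing $p$ so that the image $p(\vec{A})\vec{\Omega}$ has a much larger component in the span of $\vec{u}_1,\ldots,\vec{u}_r$ than in the span of the tail $\vec{u}_{r+1},\ldots,\vec{u}_d$. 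The residual analysis underlying randomized SVD in \cite{halko_martinsson_tropp_11,musco_musco_15} then converts such a polynomial guarantee, together with a mild well-conditioning statement for the Gaussian projection onto the top-$r$ eigenspace, into the desired spectral-norm bound $\|\vec{A}-\vec{Q}\vec{Q}^\T\vec{A}\| < (1+\varepsilon)\lambda_{r+1}$.

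The main obstacle is that a single filter $p(\vec{A})\vec{\Omega}$ has rank at most $\ell$, which may be much smaller than $r$, so no single Chebyshev polynomial can simultaneously resolve all $r$ dominant eigendirections; this is exactly the regime where the one-shot Musco--Musco argument fails. I would overcome this by a peeling construction: partition the top $r$ eigenvalues into $K := \lceil r/\ell \rceil$ consecutive groups of size at most $\ell$, and design filters $p_1,\ldots,p_K$ so that each $p_j$ is close to $1$ on group $j$ and close to $0$ both on earlier groups and on the tail $(0,\lambda_{r+1}]$. Because consecutive relevant eigenvalues are separated by at least $\Delta\lambda_1$, a shifted and scaled Chebyshev polynomial of degree $O(\log(1/\Delta)/\sqrt{\varepsilon})$ accomplishes this for each group, where the $\log(1/\Delta)$ factor comes from resolving eigenvalues across the minimum normalized gap and the $1/\sqrt{\varepsilon}$ factor is the usual Chebyshev acceleration. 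Taking $p = p_K p_{K-1} \cdots p_1$ gives total polynomial degree $O((r/\ell)\log(1/\Delta)/\sqrt{\varepsilon})$, matching the leading term of the stated bound on $s$.

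To finish, I would invoke standard Gaussian concentration for each group: the event that the relevant $\ell \times \ell$ block of $\vec{U}^\T\vec{\Omega}$ fails to be well conditioned, or that the tail component of $\vec{\Omega}$ is anomalously large in some direction, occurs with probability at most $\exp(-\Omega(\ell))$ per group; a union bound over the $K$ groups and over an $\varepsilon$-net on the unit sphere in $\mathbb{R}^d$ absorbs the additive $\log(d/(\delta\varepsilon))$ slack in $s$. The hardest technical piece is the bookkeeping across the peeling steps: one must argue that the filter $p_j$ does not undo the amplification already achieved for groups $1,\ldots,j-1$ and does not inflate the tail beyond what subsequent stages can still suppress. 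This is where I expect the argument of \cite{chen_epperly_meyer_musco_rao_25} to do most of the real work, via an inductive projected-spectrum analysis in the Krylov basis; the remainder is standard randomized-linear-algebra scaffolding.
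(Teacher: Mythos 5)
The first thing to note is that the paper does not prove this statement at all: it is quoted verbatim as Theorem~1.3 of \cite{chen_epperly_meyer_musco_rao_25} and used as a black box in the proof of \cref{thm:main_rate_TCS}. So there is no in-paper argument to compare yours against; the only question is whether your sketch would stand on its own as a proof, and as written it does not. You explicitly defer ``the hardest technical piece'' --- the inductive bookkeeping showing that each peeling stage preserves the amplification of earlier groups without inflating the tail --- to the very reference whose theorem you are trying to establish, so the proposal is circular at exactly the point where the real work lies.

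There is also a concrete flaw in the construction as stated. You open by setting $\vec{Q} = \operatorname{orth}(p(\vec{A})\vec{\Omega})$ for a single polynomial $p$, and after introducing the peeling filters $p_1,\ldots,p_K$ you collapse them back into one product $p = p_K p_{K-1}\cdots p_1$. But any single filtered sketch $p(\vec{A})\vec{\Omega}$ has rank at most $\ell$, so when $\ell < r$ it cannot yield a $d\times r$ matrix $\vec{Q}$ with orthonormal columns capturing all $r$ dominant eigendirections --- this is precisely the obstacle you identified in your second paragraph, and the single-product construction reintroduces it. What is actually needed is to keep the intermediate stages: the Krylov space $\mathcal{K}_s(\vec{A},\vec{\Omega})$ with $s$ equal to the total degree contains all partial-product images $p_j\cdots p_1(\vec{A})\vec{\Omega}$, and $\vec{Q}$ must be assembled from the union of these $K \approx r/\ell$ rank-$\le\ell$ blocks, together with an argument that their combined span is well conditioned and nearly contains the top-$r$ eigenspace. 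That assembly, plus the cross-stage interference control, is the substance of the cited result; the Chebyshev-per-group degree count $O\bigl((r/\ell)\log(1/\Delta)/\sqrt{\varepsilon}\bigr)$ and the Gaussian well-conditioning events you invoke are the easy scaffolding around it. (The additive $\log(d/(\delta\varepsilon))$ term also typically arises from requiring the filters to suppress the tail by a factor polynomial in $d/(\delta\varepsilon)$, not from an $\varepsilon$-net over the unit sphere, though this is a secondary point.)
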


Note that \cref{thm:any_block_size} is not for Nystr\"om low-rank approximation. 
However, Nystr\"om low-rank approximation is better than projection-based low-rank approximation \cite{tropp_webber_23}, which allows us to prove \cref{thm:main_rate_TCS}.
\par\textit{Proof of \cref{thm:main_rate_TCS}.}
    We set $\varepsilon = 10 \mathrm{e}^{1/4}-1$.
    Since $\operatorname{range}(\vec{Q})\subseteq \operatorname{range}(\vec{K}_s)$, the standard monotonicity property of the Nystr\"om approximation implies $\| \vec{A} - \vec{A}\langle \vec{K}_s \rangle \| \leq \| \vec{A} - \vec{A}\langle \vec{Q} \rangle \|$.
    Next, \cite[Lemma 5.2]{tropp_webber_23} asserts that for any matrix $\vec{Q}$ with orthonormal columns, $\|\vec{A} - \vec{A}\langle \vec{Q}\rangle \| \leq \|\vec{A} - \vec{Q}\vec{Q}^\T\vec{A}\|$.
    Thus, by \cref{thm:any_block_size} (with $\varepsilon$ set to an appropriate constant), the given condition on $s$ ensures that, with probability at least $1-\delta$, $\| \vec{A} - \vec{A}\langle \vec{K}_s \rangle \| < 10 \mathrm{e}^{1/4} \lambda_{r+1}$.
    The proof then follows similarly to the proof of \cref{thm:main_rate}.
\endproof

\begin{remark}
A number of algorithms for Gaussian process regression make use of block-KSMs to simultaneously solve a linear system and apply a matrix-function to a collection of Gaussian random vectors (for stochastic trace estimation) \cite{gardner_pleiss_weinberger_bindel_wilson_18,xu_cai_huang_chow_xi_25}.
Our bounds are relevant to this setting, and provide further theoretical justification to the use of block-KSMs in these settings.
\end{remark}

\section{Gaussian sampling}
\label{sec:gaussian_sample}

Several applications in data science and statistics require sampling Gaussians with a given mean and covariance \cite{thompson_33,besag_york_molli_91,bardsley_12,wang_jegelka_17}.
A standard approach is to transform an isotropic Gaussian vector.
Indeed, suppose $\vec{b}\sim\mathcal{N}(\vec{0},\vec{I})$ (i.e. that entries of $\vec{b}$ are independent standard Gaussians).
Then, for positive definite $\vec{A}$,
\begin{equation*}
\bm{\mu} + \vec{A}^{1/2}\vec{b} \sim \mathcal{N}(\bm{\mu},\vec{A}).
\end{equation*}
In other words, in order to sample Gaussian vectors with covariance $\vec{A}$, it suffices to apply the matrix-square root of $\vec{A}$ to a standard Gaussian vector and then shift this result by $\bm{\mu}$. 
The most computationally difficult part of this is applying the square root of $\vec{A}$ to $\vec{b}$.

Recall the relation:
\begin{equation*}
    \vec{A}^{1/2}\vec{b}
    = \frac{2}{\pi}\int_{0}^\infty \vec{A}(\vec{A}+z^2\vec{I})^{-1}\vec{b}\, \d z.
\end{equation*}
When $t$ is sufficiently large, we might hope that $\cg{t}(z^2) \approx (\vec{A}+z^2\vec{I})^{-1}\vec{b}$.
This motivates the following definition.
\begin{definition}\label{def:lansq}
The $t$-th Lanczos square root iteration is defined as
\begin{equation*}
\lansq{t} := \frac{2}{\pi}\int_{0}^\infty \vec{A}\cg{t}(z^2)\, \d z.
\end{equation*}
\end{definition}
This and closely related methods appear throughout the literature  \cite{chow_saad_14,pleiss_jankowiak_eriksson_damle_gardner_20,chen_24}.

Often we wish to sample multiple vectors from $\mathcal{N}(\bm{\mu},\vec{A})$, and so we might use a block-variant of \cref{def:lansq}.
\begin{definition}\label{def:blansq}
Let $\vec{B} = [\vec{b}^{(1)}~\cdots~\vec{b}^{(m)}]$.
The $t$-th block-Lanczos square root iterate is defined as
\begin{equation*}
\blansq{t}{i} := \frac{2}{\pi}\int_{0}^\infty \vec{A}\bcg{t}{i}(z^2)\, \d z.
\end{equation*}
\end{definition}
Both algorithms can be efficiently implemented using the (block)-Lanczos algorithms; see e.g. \cite{chen_24}.

We can use the bounds for augmented block-CG from \cref{sec:analysis} to derive bounds for the block-Lanczos square root algorithm.
In particular, using \cref{thm:main_rate}, we prove:
\begin{theorem}\label{thm:lansq}
    Let $\vec{b}^{(1)}, \ldots, \vec{b}^{(m)}\in\mathbb{R}^{d}$ be independent standard Gaussian random vectors.
    Suppose 
    \[
    r \leq  (m-1) \left\lceil\frac{\log(m/\delta)}{\log(100)}\right\rceil^{-1} - 2,
    \]
    Then, with probability at least $1-\delta$,
    \[
    \Bigg\{ \forall i: \frac{\| \vec{A}^{1/2}\vec{b}^{(i)} - \blansq{t}{i} \|}{\|\vec{A}^{1/2}\|\| \vec{b}^{(i)} \|} 
    \leq \log\left(16\kappa\right) \exp\left( -\frac{t - (2 + \log(d)/2)}{3\sqrt{\kappa_{r+1}(0)}} \right)  \Bigg\}.
    \]
\end{theorem}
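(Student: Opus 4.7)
The plan is to reduce the matrix-square-root error to an integrated block-CG error and then invoke \cref{thm:main_rate}. Since $\vec{b}^{(1)}, \ldots, \vec{b}^{(m)}$ are independent Gaussians, for each index $i$ we can treat $\vec{b}^{(i)}$ as the right-hand side and the remaining $m-1$ vectors as the augmenting matrix $\vec{\Omega}^{(-i)}$; the block-CG iterates over $\vec{B} = [\vec{b}^{(1)}~\cdots~\vec{b}^{(m)}]$ are identical to those over $[\vec{b}^{(i)}~\vec{\Omega}^{(-i)}]$ since the block Krylov subspaces coincide. Setting $p = 2$ and using per-instance failure probability $\delta/m$, \cref{thm:main_rate} requires $(r+2)q \leq m-1$ with $q = \lceil \log(m/\delta)/\log(100)\rceil$, which is exactly the stated hypothesis. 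A union bound over $i$ then guarantees that, with probability at least $1-\delta$, for all $i$ and all $z \geq 0$,
\begin{equation*}
\| \vec{e}_z^{(i)} \|_{\vec{A}_{z^2}}
\leq 2 \|\vec{A}_{z^2}^{-1}\vec{b}^{(i)}\|_{\vec{A}_{z^2}} \exp\left( -\frac{t-s}{3\sqrt{\kappa_{r+1}(z^2)}} \right),
\end{equation*}
where $\vec{e}_z^{(i)} := \vec{A}_{z^2}^{-1}\vec{b}^{(i)} - \bcg{t}{i}(z^2)$ and $s = 2 + \log(d)/2$.

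Next, the integral representation \cref{eqn:sqrt_rational} gives $\vec{A}^{1/2}\vec{b}^{(i)} - \blansq{t}{i} = \frac{2}{\pi}\int_0^\infty \vec{A}\vec{e}_z^{(i)}\,\d z$, so it suffices to control $\int_0^\infty \|\vec{A}\vec{e}_z^{(i)}\|\,\d z$. Working in the eigenbasis of $\vec{A}$, for any $\vec{x}$ we have $\|\vec{A}\vec{x}\|^2 \leq (\max_i \lambda_i^2/(\lambda_i+z^2)) \|\vec{x}\|_{\vec{A}_{z^2}}^2 = (\lambda_1^2/(\lambda_1+z^2))\|\vec{x}\|_{\vec{A}_{z^2}}^2$, and $\|\vec{A}_{z^2}^{-1}\vec{b}^{(i)}\|_{\vec{A}_{z^2}}^2 \leq \|\vec{b}^{(i)}\|^2/(\lambda_d+z^2)$. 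Combining these,
\begin{equation*}
\| \vec{A}\vec{e}_z^{(i)} \|
\leq \frac{2\lambda_1 \|\vec{b}^{(i)}\|}{\sqrt{(\lambda_1+z^2)(\lambda_d+z^2)}} \exp\left( -\frac{t-s}{3\sqrt{\kappa_{r+1}(z^2)}} \right).
\end{equation*}
Because $\kappa_{r+1}(z^2) = (\lambda_{r+1}+z^2)/(\lambda_d+z^2)$ is monotonically decreasing in $z$, I can replace $\kappa_{r+1}(z^2)$ by $\kappa_{r+1}(0)$ in the exponential and pull that factor outside the integral.

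It remains to bound $I := \int_0^\infty \d z / \sqrt{(\lambda_1+z^2)(\lambda_d+z^2)}$. I would split the integration range at $\sqrt{\lambda_d}$ and $\sqrt{\lambda_1}$: on $[0,\sqrt{\lambda_d}]$ bound the integrand by $1/\sqrt{\lambda_1\lambda_d}$; on $[\sqrt{\lambda_d},\sqrt{\lambda_1}]$ use $\lambda_d+z^2 \geq z^2$ to bound it by $1/(z\sqrt{\lambda_1})$, which integrates to $\log(\sqrt{\kappa})/\sqrt{\lambda_1}$; on $[\sqrt{\lambda_1},\infty)$ use $(\lambda_1+z^2)(\lambda_d+z^2)\geq z^4$, giving $1/z^2$, which integrates to $1/\sqrt{\lambda_1}$. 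Summing these three pieces yields $I \leq (\tfrac{1}{2}\log\kappa + 2)/\sqrt{\lambda_1}$. Putting everything together gives
\begin{equation*}
\| \vec{A}^{1/2}\vec{b}^{(i)} - \blansq{t}{i} \|
\leq \frac{2}{\pi}\cdot 2\sqrt{\lambda_1}\,\|\vec{b}^{(i)}\|\,\bigl(\tfrac{1}{2}\log\kappa + 2\bigr)\exp\left( -\tfrac{t-s}{3\sqrt{\kappa_{r+1}(0)}} \right),
\end{equation*}
and a short arithmetic check shows $\tfrac{4}{\pi}(\tfrac{1}{2}\log\kappa + 2) \leq \log(16\kappa)$ for all $\kappa \geq 1$, yielding the claimed bound after dividing by $\|\vec{A}^{1/2}\|\|\vec{b}^{(i)}\|$.

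The main obstacle is Step 3 — finding a bound on $\|\vec{A}\vec{e}_z^{(i)}\|$ whose $z$-dependent prefactor decays fast enough that the integral converges. The naive bound $\|\vec{A}\vec{e}_z^{(i)}\| \leq \|\vec{A}^{1/2}\|\cdot\|\vec{e}_z^{(i)}\|_{\vec{A}_{z^2}}$ gives an integrand $\sim 1/\sqrt{\lambda_d+z^2}$, which diverges at $z=\infty$; the sharper bound $\lambda_1/\sqrt{\lambda_1+z^2}$, obtained by diagonalizing, is what makes the integral finite and produces the desired logarithmic dependence on $\kappa$.
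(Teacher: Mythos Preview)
Your proof is correct and follows essentially the same structure as the paper: reduce to \cref{thm:main_rate} via the leave-one-out augmentation, bound $\|\vec{A}\vec{e}_z^{(i)}\|$ by $\lambda_1/\sqrt{(\lambda_1+z^2)(\lambda_d+z^2)}$ times the right-hand side and exponential factors, pull the exponential out using monotonicity of $\kappa_{r+1}(z^2)$, integrate, and union-bound over $i$.

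The one genuine difference is how the integral $I = \int_0^\infty \d z/\sqrt{(\lambda_1+z^2)(\lambda_d+z^2)}$ is handled. The paper recognizes $I$ as $K(1-\kappa)/\sqrt{\lambda_d}$, where $K$ is the complete elliptic integral of the first kind, and then invokes a classical inequality of Qiu--Vamanamurthy to bound $\tfrac{2}{\pi}\sqrt{\kappa}K(1-\kappa)\leq \tfrac{1}{2}\log(16\kappa)$. Your three-piece splitting at $z=\sqrt{\lambda_d}$ and $z=\sqrt{\lambda_1}$ is a fully elementary alternative that lands on the same final constant after your inequality $\tfrac{4}{\pi}(\tfrac{1}{2}\log\kappa+2)\leq \log(16\kappa)$. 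Your route is more self-contained (no auxiliary lemma on elliptic integrals needed); the paper's route is slightly cleaner to state and reuses a named special-function bound. Either is fine.
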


Similarly, using \cref{thm:main_rate_TCS}, we prove:
\begin{theorem}\label{thm:lansq_TCS}
    Let $\vec{b}^{(1)}, \ldots, \vec{b}^{(m)}\in\mathbb{R}^{d}$ be independent standard Gaussian random vectors and $r\geq 0$.
    Then, for some
    \begin{equation*}
    s = O\left( \frac{r}{m} \log(\Delta) + \log\left( \frac{d}{\delta} \right) \right),
    \quad
    \Delta := \min_{i=1, \ldots, (m-1)\lceil r/(m-1) \rceil-1} \frac{\lambda_i - \lambda_{i+1}}{\lambda_1},
    \end{equation*}
     with probability at least $1-\delta$,
    \[
    \Bigg\{ \forall i: \frac{\| \vec{A}^{1/2}\vec{b}^{(i)} - \blansq{t}{i} \|}{\|\vec{A}^{1/2}\|\| \vec{b}^{(i)} \|} 
    \leq \log\left(16\kappa\right) \exp\left( -\frac{t - s}{3\sqrt{\kappa_{r+1}(0)}} \right)  \Bigg\}.
    \]
\end{theorem}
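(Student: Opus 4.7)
The plan is to mirror the proof of \cref{thm:lansq}, the only substantive change being that we invoke \cref{thm:main_rate_TCS} in place of \cref{thm:main_rate} to control the block-CG error uniformly in $\mu$. The argument has a probabilistic part (Step 1) and a deterministic integration part (Steps 2--3).

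Step 1 (reduction to block-CG). Fix $i$ and set $\vec{\Omega}^{(i)}\in\mathbb{R}^{d\times(m-1)}$ to be the matrix whose columns are $\{\vec{b}^{(j)}:j\neq i\}$. By assumption $\vec{\Omega}^{(i)}$ is a Gaussian matrix independent of $\vec{b}^{(i)}$, and since $\mathcal{K}_t(\vec{A},\vec{B})$ is invariant under column permutations of $\vec{B}$, $\bcg{t}{i}(\mu)$ coincides with the first-column block-CG iterate associated to the augmented block $[\vec{b}^{(i)}~\vec{\Omega}^{(i)}]$. Applying \cref{thm:main_rate_TCS} with $\ell=m-1$ at failure probability $\delta/m$ (the additional $\log m$ is absorbed into the $O(\log(d/\delta))$ term), and union bounding over $i=1,\ldots,m$, yields that for the stated $s$, with probability $\geq 1-\delta$, simultaneously for every $i$ and every $\mu\geq 0$,
\[
\|\vec{A}_\mu^{-1}\vec{b}^{(i)} - \bcg{t}{i}(\mu)\|_{\vec{A}_\mu} \leq 2\exp\!\Bigl(-\tfrac{t-s}{3\sqrt{\kappa_{r+1}(\mu)}}\Bigr)\,\|\vec{A}_\mu^{-1}\vec{b}^{(i)}\|_{\vec{A}_\mu}.
\]

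Step 2 (integral representation). Writing $\vec{e}_t(\mu) := \vec{A}_\mu^{-1}\vec{b}^{(i)} - \bcg{t}{i}(\mu)$ and subtracting \cref{def:blansq} from \cref{eqn:sqrt_rational} gives $\|\vec{A}^{1/2}\vec{b}^{(i)} - \blansq{t}{i}\| \leq \tfrac{2}{\pi}\int_0^\infty \|\vec{A}\,\vec{e}_t(z^2)\|\,\d z$. Combining the pointwise estimate $\|\vec{A}\vec{e}\|^2 = \vec{e}^\T\vec{A}^2\vec{e} \leq \|\vec{A}\|\|\vec{e}\|_{\vec{A}_\mu}^2$ with $\|\vec{A}_\mu^{-1}\vec{b}^{(i)}\|_{\vec{A}_\mu} \leq \|\vec{b}^{(i)}\|/\sqrt{\lambda_d+\mu}$ and Step 1 shows that on the success event
\[
\|\vec{A}\vec{e}_t(z^2)\| \leq 2\,\|\vec{A}^{1/2}\|\,\|\vec{b}^{(i)}\|\cdot\frac{\exp\bigl(-(t-s)/(3\sqrt{\kappa_{r+1}(z^2)})\bigr)}{\sqrt{\lambda_d+z^2}}.
\]

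Step 3 (scalar integration). It remains to bound $\int_0^\infty \exp(-c/\sqrt{\kappa_{r+1}(z^2)})/\sqrt{\lambda_d+z^2}\,\d z$ with $c = (t-s)/3$ by $\tfrac{\pi}{2}\log(16\kappa)\exp(-c/\sqrt{\kappa_{r+1}(0)})$. This is exactly the deterministic integral estimate that already underlies \cref{thm:lansq}: one splits the integration domain at $z^2=\lambda_1$, uses that $z\mapsto \kappa_{r+1}(z^2)$ is decreasing so the exponential on $[0,\sqrt{\lambda_1}]$ is bounded by its value at $z=0$, and handles the tail via $\int_{\sqrt{\lambda_1}}^{\infty} \d z/\sqrt{\lambda_d+z^2}$, whose logarithmic divergence at $z\to\infty$ is tamed by the exponential being bounded by $1$ there and by cutting off at a scale where the $\|\vec{A}^{1/2}\|\|\vec{b}^{(i)}\|$ normalization absorbs the leftover mass.

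The main obstacle is Step 3, the sharp scalar estimate that yields the precise $\log(16\kappa)$ constant. Crucially, this calculation does not depend on which bound on $s$ is used in Step 1, so the computation from the proof of \cref{thm:lansq} transfers verbatim, with the smaller value of $s$ afforded by \cref{thm:main_rate_TCS} plugged into the final expression.
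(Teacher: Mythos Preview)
Your Step 1 is fine and matches the paper. The gap is in Steps 2--3.

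In Step 2 you use the estimate $\|\vec{A}\vec{e}\|^2 \leq \|\vec{A}\|\,\|\vec{e}\|_{\vec{A}_\mu}^2$, which yields an integrand proportional to $\exp\bigl(-c/\sqrt{\kappa_{r+1}(z^2)}\bigr)/\sqrt{\lambda_d+z^2}$. This integrand is \emph{not} integrable on $[0,\infty)$: as $z\to\infty$ we have $\kappa_{r+1}(z^2)\to 1$, so the exponential tends to the positive constant $\exp(-c)$, while $\int^\infty dz/\sqrt{\lambda_d+z^2}$ diverges logarithmically. Your Step 3 narrative (splitting at $z^2=\lambda_1$, ``cutting off at a scale where the normalization absorbs the leftover mass'') does not rescue this, and it is not what the paper's proof of \cref{thm:lansq} does.

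The paper instead uses the sharper pointwise bound
\[
\|\vec{A}\vec{e}_t(\mu)\| = \|\vec{A}\vec{A}_\mu^{-1/2}\vec{A}_\mu^{1/2}\vec{e}_t(\mu)\| \le \|\vec{A}\vec{A}_\mu^{-1/2}\|\,\|\vec{e}_t(\mu)\|_{\vec{A}_\mu} = \frac{\lambda_1}{\sqrt{\lambda_1+\mu}}\,\|\vec{e}_t(\mu)\|_{\vec{A}_\mu},
\]
which, combined with $\varepsilon_t'(\mu)\le\varepsilon_t'(0)$ (since $\kappa_{r+1}(\mu)$ is decreasing), produces the convergent integrand $\lambda_1\varepsilon_t'(0)\|\vec{b}\|/\sqrt{(\lambda_1+z^2)(\lambda_d+z^2)}$. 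This integral is a complete elliptic integral of the first kind, and the paper evaluates it via \cref{thm:elliptic_k_bd} to obtain the $\tfrac{1}{2}\log(16\kappa)$ factor. No splitting of the domain is needed; the extra $1/\sqrt{\lambda_1+z^2}$ decay is exactly what makes the integral finite. Replacing your Step 2 inequality with this one and following the paper's elliptic-integral computation closes the gap; Step 1 then carries over unchanged.
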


\par\textit{Proof of \cref{thm:lansq,thm:lansq_TCS}.}
Our proof is optimized for simplicity rather than sharpness.
For notational convenience, let $\vec{b} = \vec{b}^{(1)}$ and $\vec{\Omega} = [\vec{b}^{(2)}, \ldots, \vec{b}^{(m)}]$.
As noted in \cref{sec:gaussian_sample}, $\vec{\Omega}\in\mathbb{R}^{d\times(m-1)}$ is independent of $\vec{b}$.
Consider the event 
\begin{equation}\label{eqn:bcg_eta_bd}
    \Bigg\{ \forall \mu\geq 0: \frac{\| \vec{A}_{\mu}^{-1}\vec{b} - \bcg{t}{1}(\mu) \|_{\vec{A}_{\mu}}}{\|\vec{A}_{\mu}^{-1} \vec{b}\|_{\vec{A}_\mu} }
    \leq \varepsilon_t'(\mu) \Bigg\},
\end{equation}
where 
\begin{equation*}
\varepsilon_t'(\mu)
=
2\exp\left( -\frac{t - s}{3\sqrt{\kappa_{r+1}(\mu)}} \right).
\end{equation*}

To prove \cref{thm:lansq}, note that the choice of $r$ ensures that $m-1 \geq (r+2)q$ for $q = \lceil\log(m/\delta)/\log(100)\rceil$.
Therefore, with $s = 2 +\log(d)/2$, \cref{thm:main_rate} guarantees that \cref{eqn:bcg_eta_bd} holds with probability at least $1-\delta/m$.
Likewise, to prove \cref{thm:lansq_TCS}, note that the \cref{thm:main_rate_TCS} guarantees that \cref{eqn:bcg_eta_bd} holds with probability at least $1-\delta/m$ (where we have used that $m\leq d$ so that $\log(dm/\delta) = O(\log(d/\delta))$).

From this point on we condition on \cref{eqn:bcg_eta_bd}, and the two proofs are identical.
Applying standard norm inequalities we obtain a bound
\begin{align*}
    \| \vec{A}\vec{A}_{\mu}^{-1}\vec{b} - \vec{A}\bcg{t}{1}(\mu) \|
    &=\| \vec{A}\vec{A}_{\mu}^{-1/2}\vec{A}_\mu^{1/2}(\vec{A}_\mu^{-1}\vec{b} - \bcg{t}{1}(\mu) )\|
    \\&\leq\| \vec{A}\vec{A}_{\mu}^{-1/2}\|\| \vec{A}_{\mu}^{-1}\vec{b} - \bcg{t}{1}(\mu)\|_{\vec{A}_\mu}.
\end{align*}
Under the assumption the event in \cref{eqn:bcg_eta_bd} holds, we bound
\begin{equation*}
    \|\vec{A}_{\mu}^{-1}\vec{b} - \bcg{t}{1}(\mu) \|_{\vec{A}_{\mu}}
    \leq  \varepsilon_t(\mu) \|\vec{A}_{\mu}^{-1}\vec{b}\|_{\vec{A}_{\mu}}
    \leq \varepsilon_t(\mu) \|\vec{A}_{\mu}^{-1/2}\|\|\vec{b}\|.
\end{equation*}
We observe that
\begin{equation*}
    \|\vec{A}\vec{A}_\mu^{-1/2}\|
    = \max_i \frac{\lambda_i}{\sqrt{\lambda_i+\mu}} 
    = \frac{\lambda_1}{\sqrt{\lambda_1+\mu}}
\end{equation*}
and similarly 
\begin{equation*}
\|\vec{A}_{\mu}^{-1/2}\|
= \max_i \frac{1}{\sqrt{\lambda_i+\mu}}
= \frac{1}{\sqrt{\lambda_d+\mu}}.
\end{equation*}
Therefore, since $\varepsilon_t'(\mu)\leq \varepsilon_t'(0)$
\begin{equation}\label{eqn:integrand_bd}
    \| \vec{A}\vec{A}_{\mu}^{-1}\vec{b} - \vec{A}\bcg{t}{1}(\mu) \|
    \leq \frac{\lambda_1 \varepsilon_t'(0) \|\vec{b}\|}{\sqrt{(\lambda_1+\mu)(\lambda_d+\mu)}}.
\end{equation}
Therefore, applying the triangle inequality for integrals and \cref{eqn:integrand_bd} we compute a bound
\begin{align}
    \| \vec{A}^{1/2}\vec{b} - \blansq{t}{1} \|
    \nonumber&= \frac{2}{\pi} \biggl\| \int_0^\infty \vec{A}(\vec{A}+z^2\vec{I})^{-1}\vec{b} - \vec{A}\bcg{t}{1}(z^2) \d{z} \biggr\|
    \\\nonumber&\leq \frac{2}{\pi} \int_0^\infty \| \vec{A}(\vec{A}+z^2\vec{I})^{-1}\vec{b} - \vec{A}\bcg{t}{1}(z^2) \| \d{z}
    \\&\leq \frac{2}{\pi} \int_0^\infty \frac{\lambda_1 \varepsilon_t'(0) \|\vec{b}\|}{\sqrt{(\lambda_1+z^2)(\lambda_d+z^2)}} \d{z}.
    \label{eqn:lansq_bd_final}
\end{align}
A direct computation reveals
\begin{equation}\label{eqn:ellipticK_eval}
     \int_0^\infty \frac{1}{\sqrt{(\lambda_1+z^2)(\lambda_d+z^2)}} \d{z} = \frac{K(1-\lambda_1/\lambda_d)}{\sqrt{\lambda_d}},
\end{equation}
where $K(m) := \int_0^{\pi/2}(1-m\sin^2(z))^{-1/2}\d{z}$ is the complete elliptic integral of the first kind.
Standard bounds on elliptic integrals (see \cref{thm:elliptic_k_bd}) guarantee that, for all $x> 1$, 
\begin{equation}\label{eqn:ellipticK_bd}
    \frac{2}{\pi}\sqrt{x} K(1-x)\leq \frac{5}{4\pi}\log(16x)
    \leq \frac{1}{2} \log(16x).
\end{equation}
Therefore, applying $\cref{eqn:ellipticK_bd}$ with $x = \kappa = \lambda_1/\lambda_d$ to \cref{eqn:lansq_bd_final,eqn:ellipticK_eval} we get a bound
\begin{equation*}
\| \vec{A}^{1/2}\vec{b} - \blansq{t}{1} \|
\leq \frac{\sqrt{\lambda_1}}{2} \log\left(16\kappa\right) \varepsilon_t'(0) \|\vec{b}\|.
\end{equation*}
Since $\sqrt{\lambda_1} = \|\vec{A}^{1/2}\|$, this is the desired result for $\vec{b}^{(1)}$. 

To obtain the final result, we observe that $\vec{b}^{(1)}, \ldots, \vec{b}^{(m)}$ and $\blansq{t}{1}, \ldots, \blansq{t}{m}$ are permutation invariant in distribution.
Thus, the above result in fact applies to each $\vec{b}^{(i)}$ and $\blansq{t}{i}$ pair with probability at least $1-\delta/m$. 
Applying a union bound over these $m$ events gives the main result.
\endproof

\Cref{thm:lansq,thm:lansq_TCS} gives bounds on the matrix-vector products required by the block-Lanczos method. For instance, \cref{thm:lansq} implies we can use roughly
\begin{equation*}
m\Big(\log(d) + \sqrt{\kappa_{r+1}}  \log\big(\log(\kappa)/\varepsilon\big)\Big)
\end{equation*}
matrix-vector products, where $r = O(m/\log(m))$.
In contrast, existing bounds for methods like the single-vector Lanczos square root method \cref{def:lansq}
\cite{chen_greenbaum_musco_musco_22,pleiss_jankowiak_eriksson_damle_gardner_20} sample a single Gaussian vector with roughly $\sqrt{\kappa} \log(1/\varepsilon)$ matrix-vector products.
Therefore, when $\lambda_{r+1} \ll \lambda_1$, the total number of matrix-vector products is reduced significantly by using the block method in \cref{def:blansq}.

\begin{remark}
For sampling Gaussians, some preconditioning-like type approaches can be applied used \cite{chow_saad_14,pleiss_jankowiak_eriksson_damle_gardner_20,frommer_ramirezhidalgo_schweitzer_tsolakis_24}.
As far as we are aware, there are no theoretical guarantees for such methods similar to the ones presented in the current work.
In addition, analogous to the case of linear systems, we expect block methods to enjoy the benefits of working over a larger subspace.
\end{remark}

\begin{remark}
Relating KSMs for matrix functions to KSMs for (shifted) linear systems via integral relations widely used in order to design and analyze algorithms for matrix functions; see e.g. \cite{frommer_guttel_schweitzer_14,schweitzer_25,chen_greenbaum_musco_musco_22,xu_chen_24}.
However, it is generally difficult to use preconditioners for linear systems for matrix functions, as preconditioned Krylov subspaces do not generally satisfy a shift invariance property.
As such, the integral relation no longer yields an efficient to implement algorithm. 
On the other hand, we expect similar analyses, based on implicit preconditioning with block-CG, to work for other functions.
\end{remark}

\subsection{Sampling with the inverse covariance}

Computing $\vec{A}^{-1/2}\vec{b}$ is used to transform vectors with covariance matrix $\vec{A}$ to ``whitened'' coordinate space, where the the covariance is the identity, and has found use in a number of data-science applications
\cite{kuss_rasmussen_05,pleiss_jankowiak_eriksson_damle_gardner_20}.

We can define an approximation for applying inverse square roots similar to the block-Lanczos square root iterate (\cref{def:blansq}).

\begin{definition}\label{def:blanisq}
Let $\vec{B} = [\vec{b}^{(1)}~\cdots~\vec{b}^{(m)}]$.
The $t$-th block-Lanczos inverse square root iteration is defined as
\begin{equation*}
\blanisq{t}{i} := \frac{2}{\pi}\int_{0}^\infty \bcg{t}{i}(z^2)\, \d z.
\end{equation*}
\end{definition}
Note that $\blanisq{t}{i}
= \vec{A}^{-1}\blansq{t}{i}$.
Therefore, \cref{thm:lansq} immediately gives a bound for \cref{def:blanisq} since
\begin{align*}
\| \vec{A}^{-1/2}\vec{B} - \bLanisq{t}\|
&=\| \vec{A}^{-1/2}\vec{B} - \vec{A}^{-1}\bLansq{t} \|
\\&= \| \vec{A}^{-1} ( \vec{A}^{1/2}\vec{B} - \bLansq{t} ) \|
\\&\leq \|\vec{A}^{-1}\| \| \vec{A}^{1/2}\vec{B} - \bLansq{t} \|.
\end{align*}

\section{Discussion and comparison with past work}
\label{sec:past_work}

There are may other related approaches to solving the problems in \cref{sec:intro}.
For instance, KSMs are commonly used to solve Tikahonov Regression problems, using the shift-invariance of Krylov subspaces \cref{eqn:shift_invariance} solve for to whole regularization path efficiently \cite{hanke_17,frommer_maass_99,calvetti_reichel_shuibi_04}.
Similarly, as discussed in \cref{sec:gaussian_sample},  KSMs are also for tasks involving matrix functions, including sampling Gaussian vectors \cite{chow_saad_14,pleiss_jankowiak_eriksson_damle_gardner_20,chen_24}.
The techniques discussed in \cref{sec:gaussian_sample} are likely applicable other functions as well.

The block-CG algorithm is widely used to solve linear systems with multiple right hand sides, and in such settings working over the block Krylov subspace is often advantageous compared to working over the individual Krylov subspaces \cite{oleary_80,meurant_94,feng_owen_peric_95,birk_frommer_13}.
Block-CG is also used to solve single linear systems. 
For instance, so-called enlarged KSMs split the right hand size vector $\vec{b}$ into multiple vectors using a domain decomposition approach, and then apply block-CG to this collection of vectors
\cite{grigori_moufawad_nataf_16,grigori_tissot_17,grigori_tissot_19}.
Augmenting a KSM with random vectors was shown to be beneficial in numerical experiments appearing in concurrent work \cite[Appendix B]{zimmerling_druskin_simoncini_25}.

While we are unaware of any bounds similar to \cref{thm:main_rate_TCS}, bounds similar to \cref{thm:main_rate} are known.
In particular, \cite[\S4]{oleary_80} derives a convergence bound for block-CG terms of $\sqrt{\lambda_{1}/\lambda_{d-m+1}}$; i.e. the condition number of $\vec{A}$ with the \emph{bottom} $m$ eigenvalues removed.\footnote{Note that the ordering of the eigenvalues in \cite{oleary_80} is reversed from the present paper.}
At a high level, \cite{oleary_80} obtains this bound by showing that the block Krylov subspace contains vectors which can approximately annihilate a subset of the eigenvalues of $\vec{A}$.
Thus, in principle, other sets of eigenvalues (besides the bottom ones) can be eliminated.
The bounds of \cite{oleary_80} work for any starting block (but as a result, are more technical, depending on how ``good'' the starting block is).
It would not be surprising such quantities can be bounded when the starting block is Gaussian.

KSMs are widely used for both low-rank approximation and problems involving matrix-functions, but their behavior on these problems is fundamentally different. 
As such, it would be interesting to try to avoid relying on a black-box reduction to low-rank approximation. 
For example, while low-rank approximation is complicated by the presence of repeated eigenvalues, solving linear systems is not.
The approach of \cite{oleary_80} is closely related to the approaches used in \cite{tropp_webber_23,musco_musco_15} to prove bounds for low-rank approximation, so it is conceivable that, by taking the approach of \cite{oleary_80}, but using modern tools from randomized numerical linear algebra, one may be able to directly derive better bounds for randomized versions of block-CG.


More broadly, there are many randomized algorithms for linear systems and regression. 
For instance, a recent line of work shows that certain stochastic iterative methods enjoy an implicit preconditioning-like effect \cite{derezinski_lejeune_needell_rebrova_24,lok_rebrova_24,derezinski_rebrova_24,dereziski_yang_24,derezinski_needell_rebrova_yang_25}, with convergence independent of the top eigenvalues. 
These algorithms have runtime guarantees better than KSMs in some settings \cite{derezinski_needell_rebrova_yang_25}.
In addition, for tall least squares problems, the sketch-and-precondition framework for building a preconditioner for KSMs is extremely effective \cite{rokhlin_tygert_08,martinsson_tropp_20}, resulting in algorithms with optimal theoretical complexity \cite{clarkson_woodruff_13,chenakkod_derezinski_dong_rudelson_24}.
A number of recent works in randomized numerical linear algebra
\cite{chen_hallman_23,tropp_webber_23,meyer_musco_musco_24,chen_epperly_meyer_musco_rao_25}
have made use of various nestedness properties of block-KSMs similar in flavor to \cref{thm:nested} in order to obtain more efficient algorithms.

\section{Numerical experiments}
\label{sec:numerical}

We implement the block-CG and the block-Lanczos square root iterates using the block-Lanczos algorithm with full-reorthogonalization in Python.
Code to replicate the experiments in this paper is available at \url{https://github.com/tchen-research/precond_without_precond}.
The $s$ matrix-loads required to build the Nystr\"om preconditioner are accounted for in the plots.
For Nystr\"om PCG, we set $\theta$ as the smallest eigenvalue of the Nystr\"om approximation.
We use the same random Gaussian matrix $\vec{\Omega}\in\mathbb{R}^{d\times \ell}$ for Nystr\"om PCG and block-CG. 
More numerical experiments, including without full-reorthogonalization, are included in \cref{sec:comparison,sec:numerical_appendix}.

\subsection{Convergence}
\label{sec:numerical:convergence}

We begin by comparing the convergence of the methods discussed in this paper on several test problems.
The results of this experiment are illustrated in \cref{fig:iter}, and as expected, our augmented block-CG method outperforms the other methods, often by orders-of-magnitude. 
We also observe that Nystr\"om PCG benefits from using $s>1$; i.e. from performing more than one passes over $\vec{A}$ when building the preconditioner.

As discussed in \cref{sec:comp_assm}, throughout this paper we assume that matrix-loads 
(iterations) are the dominant cost. 
For reference, we have also show convergence as a function of matrix-vector products in \cref{fig:iter_matvec}.

\begin{figure}[ht]
    \centering
    \hfil\includegraphics[scale=0.6,trim={0 0 .7cm 0},clip]{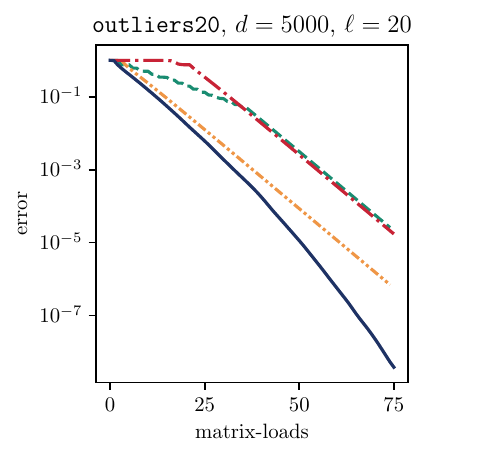}
    \hfil\includegraphics[scale=0.6,trim={.6cm 0 .7cm 0},clip]{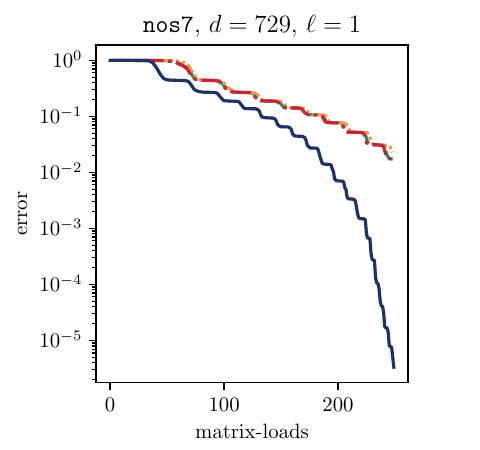}
    \hfil\includegraphics[scale=0.6,trim={.6cm 0 .7cm 0},clip]{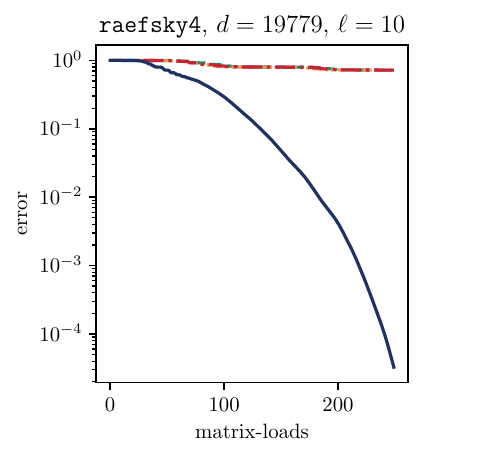}
    \caption{
    Relative error $\|\vec{A}^{-1}\vec{b} - \mathsf{alg}\|_{\vec{A}} / \|\vec{A}^{-1}\vec{b}\|_{\vec{A}}$ versus matrix-loads for
    block-CG 
    ({\protect\raisebox{0mm}{\protect\includegraphics[scale=0.68]{imgs/legend/solid.pdf}}}), 
    CG 
    ({\protect\raisebox{0mm}{\protect\includegraphics[scale=0.68]{imgs/legend/dashdot.pdf}}}), and
    Nystr\"om PCG with
    $s=1$ 
    ({\protect\raisebox{0mm}{\protect\includegraphics[scale=0.68]{imgs/legend/dash.pdf}}})
    and $s=3$
    ({\protect\raisebox{0mm}{\protect\includegraphics[scale=0.68]{imgs/legend/dashdotdot.pdf}}}) on several test problems.}
    \label{fig:iter}
\end{figure}

\subsubsection{Intro figure}

\Cref{fig:intro_fig} is run on the \texttt{fastdecay} problem ($d=8000$) with $\ell = 10$.
Nystr\"om preconditioning is run with a grid of parameters.
Specifically, we use depths $s\in\{1,5,11\}$ and a range of $\theta$, including the choice $\theta = \lambda_{\ell}(\vec{A}\langle \vec{\Omega}\rangle)$.

The timings are produced based on a setting where $\vec{A}$ cannot be stored in its entirety in fast memory. 
In particular, we store $\vec{A}$ in 8 separate $1000\times 8000$ chunks and perform (block) matrix-vector products with $\vec{A}$ by sequentially loading a single chunk from the disk into random access memory and performing the appropriate part of the products.
Similar to \cite[\S6.1]{tropp_webber_23}, we observe that the runtime of the algorithm is dominated by the cost of loading these chunks into random access memory.

\subsection{Regularization parameter}

In \cref{fig:ridge_regression} we plot the error after a fixed number of matrix loads as a function of $\mu$.
As expected, this plot indicates that our augmented block-CG outperforms the other methods for each value of $\mu$.
Perhaps more importantly, our block-CG method can efficiently compute the solution to \cref{eqn:regularized_ls} for many values of $\mu$, \emph{without the need for additional matrix-loads} (see \cref{thm:cost}).
This is in contrast to Nystr\"om PCG which requires a new run for each value of $\mu$. 
For tasks such as ridge regression, this is of note.

We note the maximum attainable accuracy of block-CG (an effect of performing the numerical experiments in finite precision arithmetic which prevents convergence to the true solution) seems to be higher than that of other methods, causing stagnation.

\begin{figure}[h]
    \centering
    \vspace{-.6em}
    \includegraphics[scale=0.6]{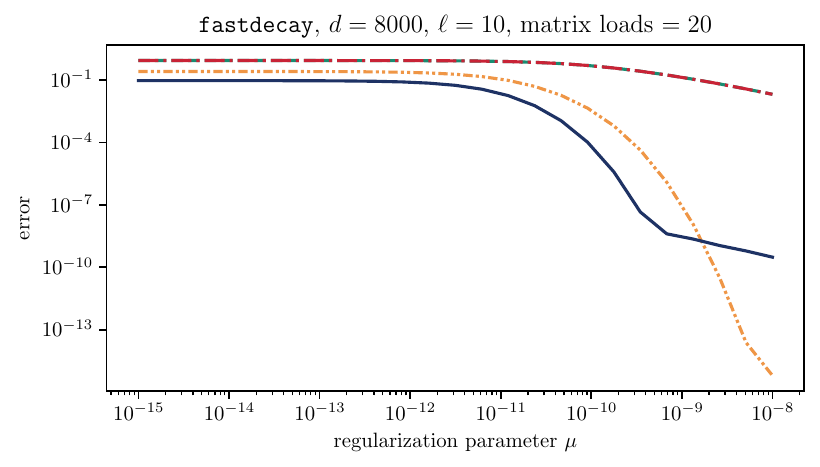}
    \vspace{-.45em}
    \caption{Relative error $\|\vec{A}_\mu^{-1}\vec{b} - \mathsf{alg}\|_{\vec{A}_\mu} / \|\vec{A}_\mu^{-1}\vec{b}\|_{\vec{A}_\mu}$ after a fixed number of matrix-loads as a function of the regularization parameter $\mu$ for
    block-CG 
    ({\protect\raisebox{0mm}{\protect\includegraphics[scale=0.68]{imgs/legend/solid.pdf}}}), 
    CG 
    ({\protect\raisebox{0mm}{\protect\includegraphics[scale=0.68]{imgs/legend/dashdot.pdf}}}), and
    Nystr\"om PCG with
    $s=1$ 
    ({\protect\raisebox{0mm}{\protect\includegraphics[scale=0.68]{imgs/legend/dash.pdf}}})
    and $s=3$
    ({\protect\raisebox{0mm}{\protect\includegraphics[scale=0.68]{imgs/legend/dashdotdot.pdf}}}).
    }
    \vspace{-1em}
    \label{fig:ridge_regression}
\end{figure}

\subsection{Sampling Gaussians}

We perform an experiment comparing the block-Lanczos square root iterate to the standard Lanczos square root iterate.
In particular, in \cref{fig:gaussian}, we compare the maximum error in approximating $\vec{A}^{1/2}\vec{b}^{(i)}$, $i=1, \ldots, m$  for the two methods.
All runs of the single-vector method are done in parallel so that the number of matrix-loads is independent of $m$.
The block method outperforms the single-vector method, and the performance gains are often significant.

\begin{figure}[ht]
    \centering
    \hfil\includegraphics[scale=0.6,trim={0 0 .7cm 0},clip]{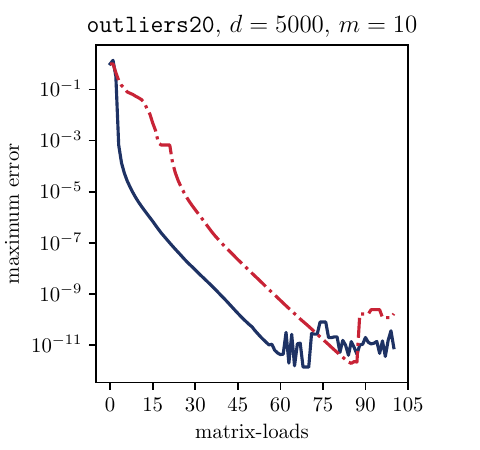}
    \hfil\includegraphics[scale=0.6,trim={.6cm 0 .7cm 0},clip]{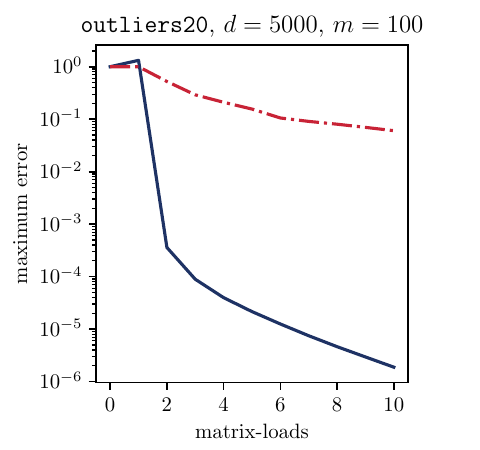}
    \hfil\includegraphics[scale=0.6,trim={.6cm 0 .7cm 0},clip]{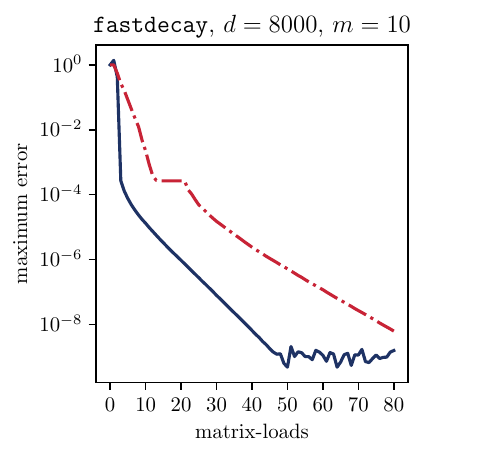}
    \caption{Maximum relative sample error $\max_i \|\vec{A}^{1/2}\vec{b}_i - \mathsf{alg}\| / \|\vec{A}^{1/2}\vec{b}_i\|$ versus matrix-loads for Lanczos square root
    ({\protect\raisebox{0mm}{\protect\includegraphics[scale=0.68]{imgs/legend/dashdot.pdf}}})
    and block-Lanczos square root
    ({\protect\raisebox{0mm}{\protect\includegraphics[scale=0.68]{imgs/legend/solid.pdf}}}).}
    \label{fig:gaussian}
\end{figure}

\section{Outlook}

We have introduced a variant of the block-CG method which outperforms Nystr\"om PCG in certain settings.
Our work provides theoretical evidence of the virtues of block Krylov subspace methods for solving a single linear system of equations and sampling Gaussian vectors.
We believe future work should study a more practical variant of augmented block-CG, where deflation is used to reduce the block-size as soon as it is recognized that the benefits of a large block size are no longer justified.



It would also be interesting to understand the extent to which similar ideas can be extended to other tasks involving matrix-functions, such as estimating the trace of matrix-functions.
Existing algorithms often involve applying matrix-functions to many Gaussian vectors \cite{ubaru_chen_saad_17,epperly_tropp_webber_24,meyer_musco_musco_24}, so it seems likely that using block-Krylov methods may be advantageous to using single-vector methods. 
However, existing theory does not reflect this. 



\vfill

\section*{Acknowledgements}

We thank Ethan Epperly, Daniel Kressner, and David Persson for useful discussions and feedback.

\section*{Disclaimer}

This paper was prepared for informational purposes by the Global Technology Applied Research center of JPMorgan Chase \& Co. This paper is not a merchandisable/sellable product of the Research Department of JPMorgan Chase \& Co. or its affiliates. Neither JPMorgan Chase \& Co. nor any of its affiliates makes any explicit or implied representation or warranty and none of them accept any liability in connection with this paper, including, without limitation, with respect to the completeness, accuracy, or reliability of the information contained herein and the potential legal, compliance, tax, or accounting effects thereof. This document is not intended as investment research or investment advice, or as a recommendation, offer, or solicitation for the purchase or sale of any security, financial instrument, financial product or service, or to be used in any way for evaluating the merits of participating in any transaction.

\appendix

\section{More numerical experiments}
\label{sec:numerical_appendix}
In this section we provide additional numerical experiments which provide additional insight into the behavior of our augmented block-CG as well as Nystr\"om PCG.

\subsection{Convergence}

We provide more test problems comparing the convergence of block-CG with Nystr\"om PCG and CG as described in \cref{sec:numerical:convergence}.
As observed in \cref{fig:iter}, block-CG outperforms the other methods in terms of matrix-loads, and often significantly so.

Note that on the \texttt{outliers20} problem we now append $\ell = 10$ or $\ell=22$ Gaussian vectors (rather than $\ell=20$ as shown in \cref{fig:iter}). 
This problem has 20 large eigenvalues, and the small amount of oversampling significantly improves the convergence of Nystr\"om PCG with $s=1$.
We also include a test with the \texttt{bottom20} problem, which has 20 small eigenvalues.

\begin{figure}[h]
    \centering
    \hfil\includegraphics[scale=0.6,trim={0 0 .7cm 0},clip]{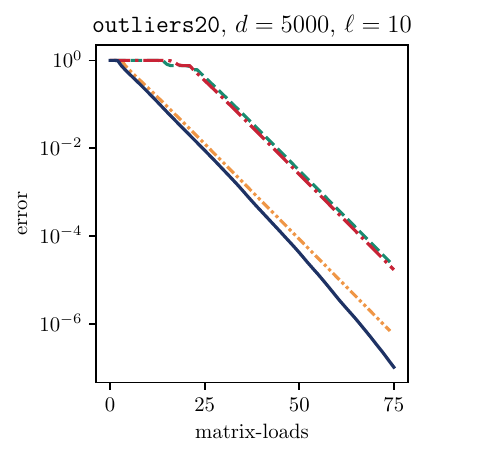}
    \hfil\includegraphics[scale=0.6,trim={.6cm 0 .7cm 0},clip]{imgs/iter_error_outliers20_l20.pdf}
    \hfil\includegraphics[scale=0.6,trim={.6cm 0 .7cm 0},clip]{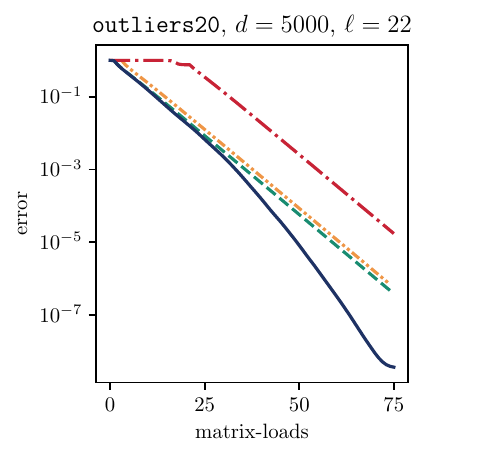}
    \\    
    \hfil\includegraphics[scale=0.6,trim={0 0 .7cm 0},clip]{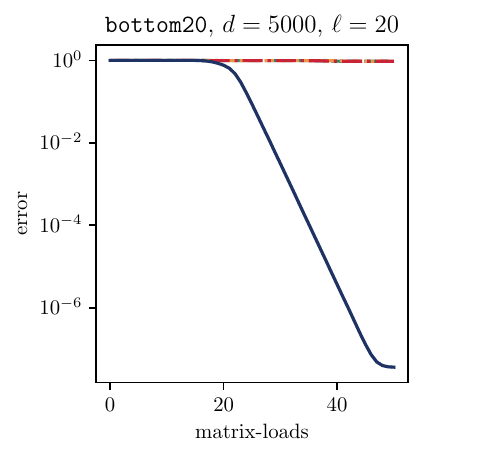}
    \hfil\includegraphics[scale=0.6,trim={.6cm 0 .7cm 0},clip]{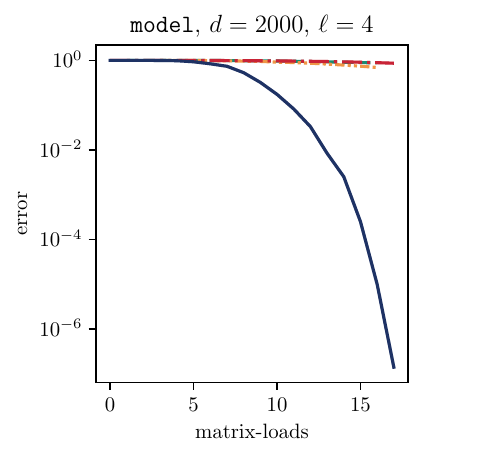}
    \hfil\includegraphics[scale=0.6,trim={.6cm 0 .7cm 0},clip]{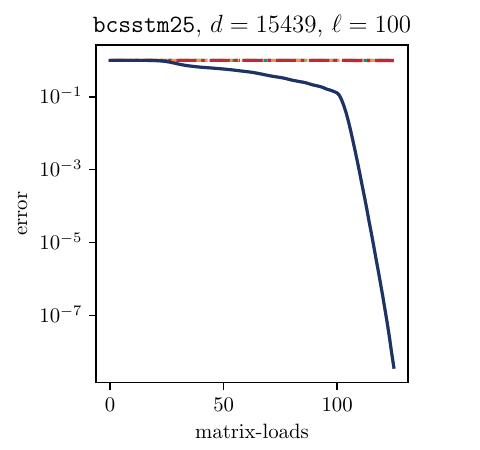}
    \caption{
    Relative error $\|\vec{A}^{-1}\vec{b} - \mathsf{alg}\|_{\vec{A}} / \|\vec{A}^{-1}\vec{b}\|_{\vec{A}}$ versus matrix-loads for
    block-CG 
    ({\protect\raisebox{0mm}{\protect\includegraphics[scale=0.68]{imgs/legend/solid.pdf}}}), 
    CG 
    ({\protect\raisebox{0mm}{\protect\includegraphics[scale=0.68]{imgs/legend/dashdot.pdf}}}), and
    Nystr\"om PCG with
    $s=1$ 
    ({\protect\raisebox{0mm}{\protect\includegraphics[scale=0.68]{imgs/legend/dash.pdf}}})
    and $s=3$
    ({\protect\raisebox{0mm}{\protect\includegraphics[scale=0.68]{imgs/legend/dashdotdot.pdf}}}) on several test problems; see also \cref{fig:iter}.
    }
    \label{fig:iter_appendix}
\end{figure}

\subsection{Block size}

\Cref{thm:main_rate_TCS} suggests the accuracy of low-rank approximations to $\vec{A}$ built from the information in the block Krylov subspace $\mathcal{K}_t(\vec{A},\vec{\Omega})$, where $\vec{\Omega}$ is a random Gaussian matrix, depends on $bk$, regardless of the individual value of $b$ and $k$.
In \cref{fig:blocksize_error}, we show the error (after a fixed number of matrix-loads) as a function of the block size, and in \cref{fig:blocksize_condno} we show the condition number of $\vec{P}_\mu^{-1/2}\vec{A}_\mu\vec{P}_\mu^{-1/2}$ as a function of the block size. 
In some cases Nystr\"om PCG with depth $s=3$ behaves similarly to Nystr\"om PCG with depth $s=1$ when the same number of matrix-vector products are used. 
For instance, on the \texttt{outliers20} problem, both methods have a significant drop in the error (due to a significant drop in the condition number) when dimension of the Krylov subspace is roughly equal to $20$, the number of outlying eigenvalues.  

\begin{figure}[H]
    \centering
    \hfil\includegraphics[scale=0.6,trim={0 0 .7cm 0},clip]{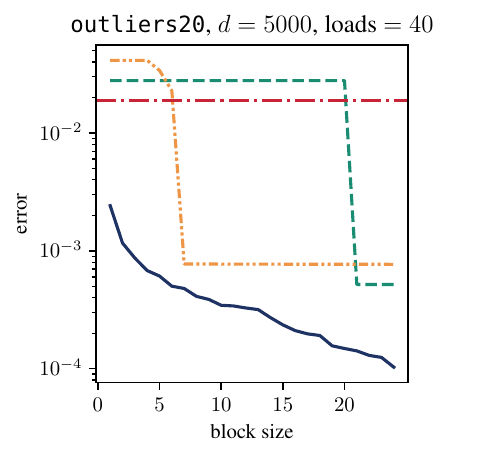}
    \hfil\includegraphics[scale=0.6,trim={.6cm 0 .7cm 0},clip]{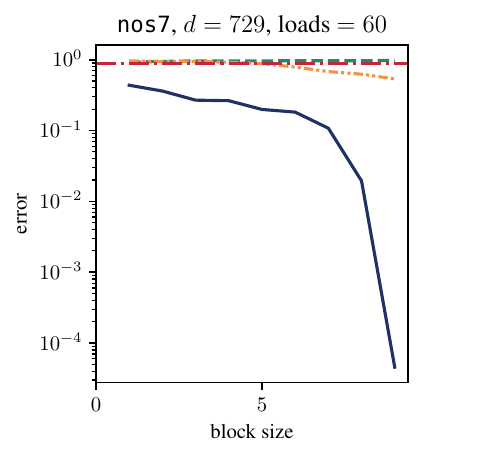}
    \hfil\includegraphics[scale=0.6,trim={.6cm 0 .7cm 0},clip]{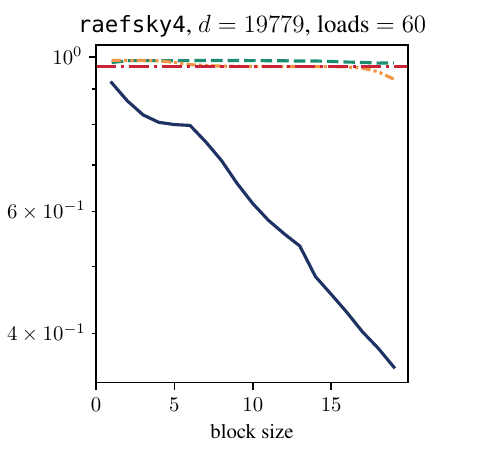}
    \caption{Relative error $\|\vec{A}^{-1}\vec{b} - \mathsf{alg}\|_{\vec{A}} / \|\vec{A}^{-1}\vec{b}\|_{\vec{A}}$ versus the Nystr\"om starting block size $\ell$ (number of columns in $\vec{\Omega}$) for
    block-CG 
    ({\protect\raisebox{0mm}{\protect\includegraphics[scale=0.68]{imgs/legend/solid.pdf}}}), 
    CG 
    ({\protect\raisebox{0mm}{\protect\includegraphics[scale=0.68]{imgs/legend/dashdot.pdf}}}), and
    Nystr\"om PCG with
    $s=1$ 
    ({\protect\raisebox{0mm}{\protect\includegraphics[scale=0.68]{imgs/legend/dash.pdf}}})
    and $s=3$
    ({\protect\raisebox{0mm}{\protect\includegraphics[scale=0.68]{imgs/legend/dashdotdot.pdf}}}) on several test problems.}
    \label{fig:blocksize_error}
\end{figure}

\begin{figure}[h]
    \centering
    \hfil\includegraphics[scale=0.6,trim={0 0 .7cm 0},clip]{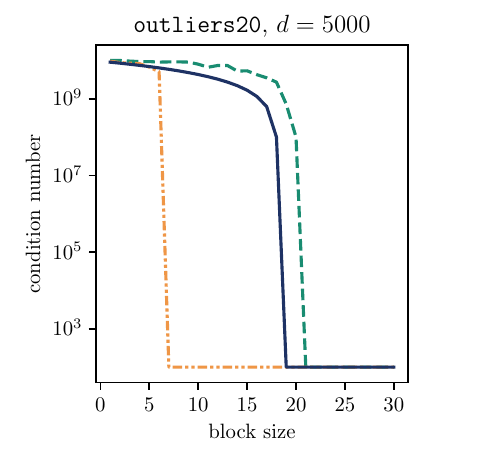}
    \hfil\includegraphics[scale=0.6,trim={.7cm 0 .7cm 0},clip]{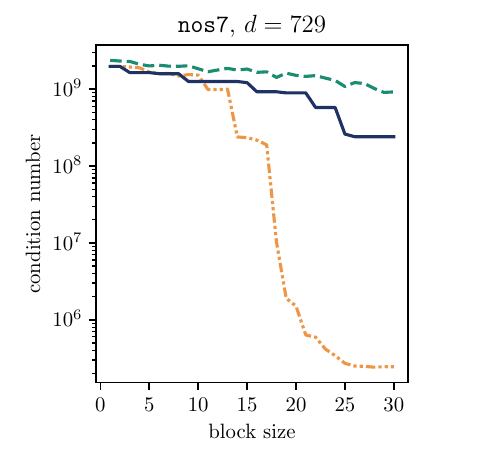}
    \hfil\hspace{-.2cm}\includegraphics[scale=0.6,trim={0cm 0 .7cm 0},clip]{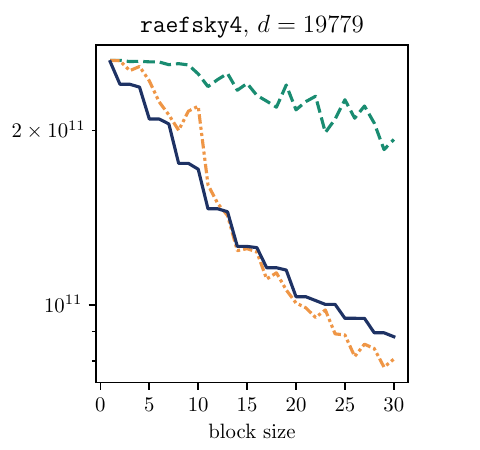}
    \caption{Condition number 
    $\kappa(\vec{P}_{\mu}^{-1/2}\vec{A}_{\mu}\vec{P}_{\mu}^{-1/2})$ of the Nystr\"om preconditioner for $s=1$ 
    ({\protect\raisebox{0mm}{\protect\includegraphics[scale=0.68]{imgs/legend/dash.pdf}}})
    and $s=3$
    ({\protect\raisebox{0mm}{\protect\includegraphics[scale=0.68]{imgs/legend/dashdotdot.pdf}}}) as a function of the block size $\ell$ (number of columns in $\vec{\Omega}$).
    For reference, we also show $(\lambda_{\ell+1}+\mu)/(\lambda_d+\mu)$ ({\protect\raisebox{0mm}{\protect\includegraphics[scale=0.68]{imgs/legend/solid.pdf}}}).
    }
    \label{fig:blocksize_condno}
\end{figure}

\subsection{Matrix-vector cost}

In \cref{fig:iter_matvec} we show convergence as a function of matrix-vector products on the same test problems as in \cref{fig:iter}.
Here our block-CG underperforms the other methods.
This does not conflict with any of our theory, which is in terms of matrix-loads.
However, it serves as a reminder that our method may not be suitable in all computational settings; see \cref{sec:comp_assm}.

\begin{figure}[ht]
    \centering
    \hfil\includegraphics[scale=0.6,trim={0 0 .7cm 0},clip]{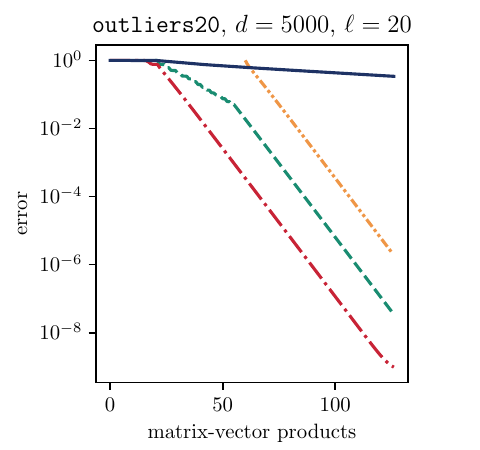}
    \hfil\includegraphics[scale=0.6,trim={.6cm 0 .7cm 0},clip]{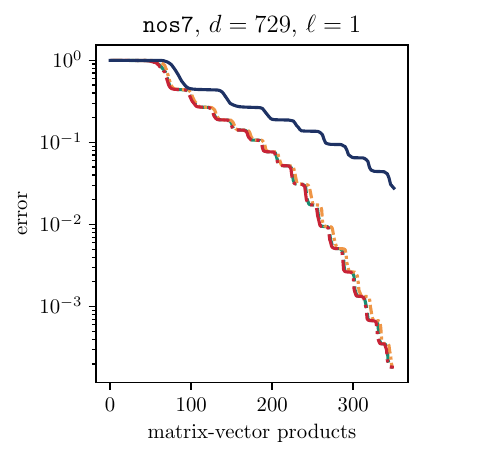}
    \hfil\includegraphics[scale=0.6,trim={.6cm 0 .7cm 0},clip]{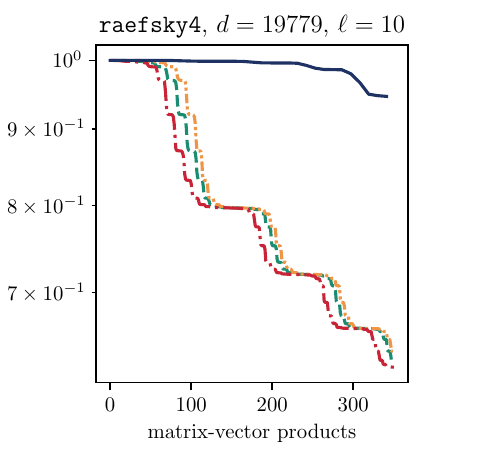}
    \\
    \hfil\includegraphics[scale=0.6,trim={0 0 .7cm 0},clip]{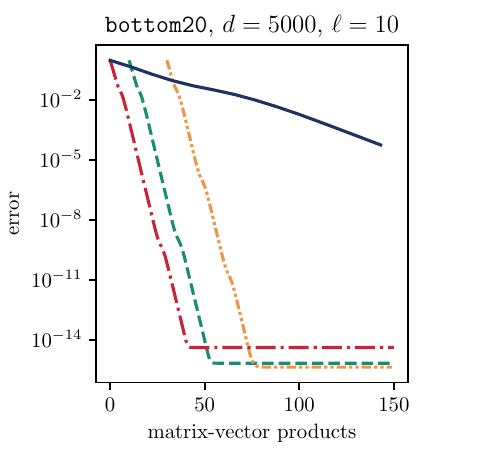}
    \hfil\includegraphics[scale=0.6,trim={.6cm 0 .7cm 0},clip]{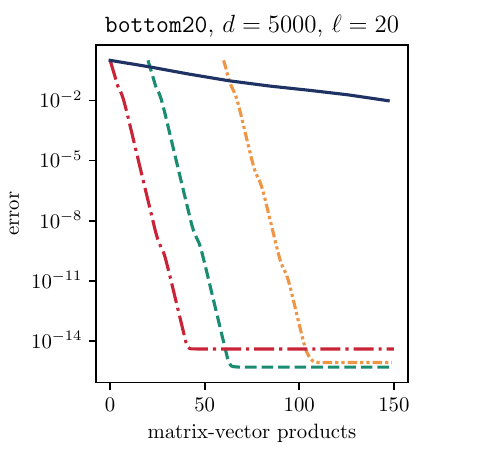}
    \hfil\includegraphics[scale=0.6,trim={.6cm 0 .7cm 0},clip]{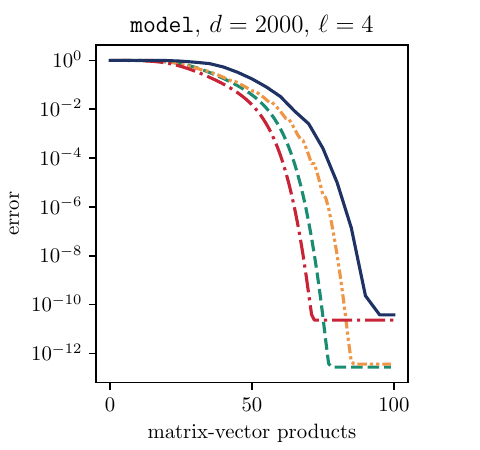}
    \caption{
    Relative error $\|\vec{A}^{-1}\vec{b} - \mathsf{alg}\|_{\vec{A}} / \|\vec{A}^{-1}\vec{b}\|_{\vec{A}}$ versus matrix-vector products for
    block-CG 
    ({\protect\raisebox{0mm}{\protect\includegraphics[scale=0.68]{imgs/legend/solid.pdf}}}), 
    CG 
    ({\protect\raisebox{0mm}{\protect\includegraphics[scale=0.68]{imgs/legend/dashdot.pdf}}}), and
    Nystr\"om PCG with
    $s=1$ 
    ({\protect\raisebox{0mm}{\protect\includegraphics[scale=0.68]{imgs/legend/dash.pdf}}})
    and $s=3$
    ({\protect\raisebox{0mm}{\protect\includegraphics[scale=0.68]{imgs/legend/dashdotdot.pdf}}}) on the same test problems as \cref{fig:iter}.
     }
    \label{fig:iter_matvec}
\end{figure}

\subsection{Test problems}

In \cref{fig:spec} we show the spectrums of the test problems we used.
\begin{figure}[H]
    \centering
    \hfil\includegraphics[scale=.6,trim={0 0 .65cm 0},clip]{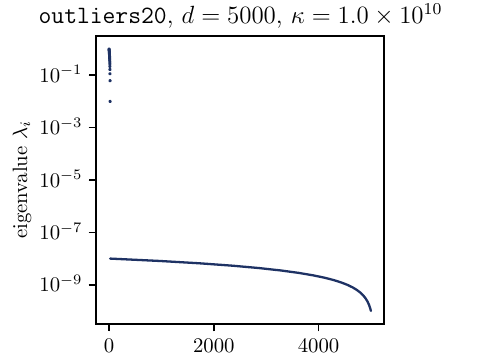}
    \hfil\includegraphics[scale=.6,trim={.55cm 0 .65cm 0},clip]{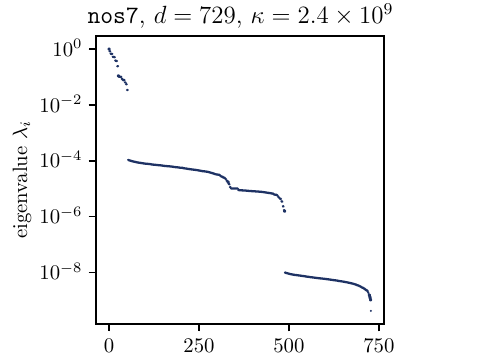}
    \hfil\includegraphics[scale=.6,trim={.55cm 0 .65cm 0},clip]{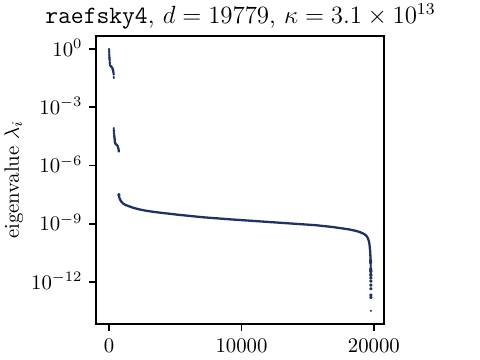}
    \\
    \hfil\includegraphics[scale=.6,trim={0 0 .65cm 0},clip]{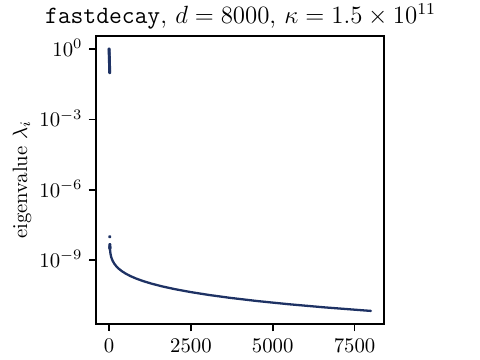}
    \hfil\includegraphics[scale=.6,trim={.55cm 0 .65cm 0},clip]{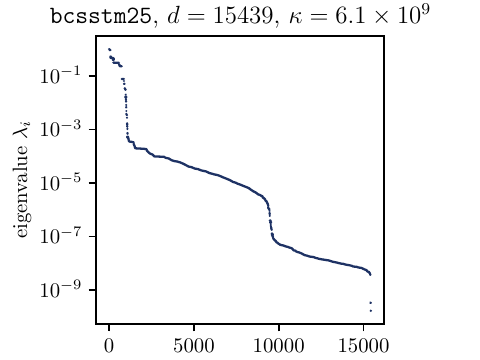}
    \hfil\includegraphics[scale=.6,trim={.55cm 0 .65cm 0},clip]{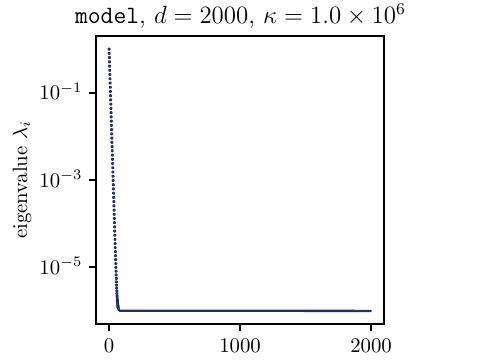}
    \\
    \includegraphics[scale=.6,trim={0 0 .65cm 0},clip]{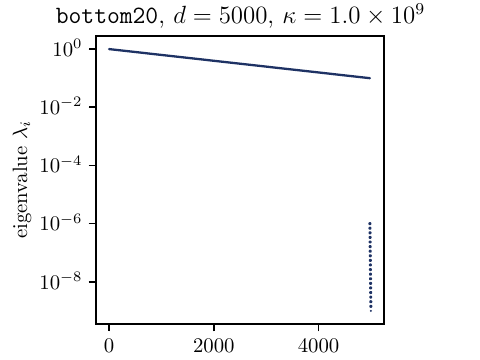}\hfill
    \caption{Spectrum of test problems used in this paper.}
    \label{fig:spec}
\end{figure}

\section{Further discussion on Nystr\"om PCG/CG/BCG}
\label{sec:comparison}
Current theory for Nystr\"om PCG and CG raises some interesting questions about the efficacy of Nystr\"om PCG if costs are measured in terms of matrix-vector products.
In particular, the bounds for Nystr\"om PCG do not really guarantee that the algorithm uses any fewer matrix-vector products than CG.
In this section we provide a discussion with the aim of raising some considerations about Nystr\"om PCG that are likely of importance to practitioners and may be interesting directions for future work.

\subsection{Bounds for CG in terms of the deflated condition number}
In \cref{thm:condno_prob} we show that if $s=O(\log(d))$, then $\kappa(\vec{P}_{\mu}^{-1/2}\vec{A}_{\mu}\vec{P}_{\mu}^{-1/2}) = O(\lambda_{r+1}/\lambda_d)$. 
It is informative to compare such a bound against similar bounds for CG. 
Towards this end, we recall a standard bound for CG; see e.g.  \cite{greenbaum_97}.
\begin{theorem}\label{thm:cg_condno_bd_full}
For any $r\geq 0$ the \CG{} iterate satisfies
\begin{equation*}
    \frac{\| \vec{A}_\mu^{-1}\vec{b} - \cg{t}(\mu) \|_{\vec{A}_\mu}}{\| \vec{A}_\mu^{-1}\vec{b} \|_{\vec{A}_\mu}}
    \leq 2\exp\left(-\frac{2(t-r)}{\sqrt{\kappa_{r+1}(\mu)}}\right).
\end{equation*}
\end{theorem}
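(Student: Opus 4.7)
The plan is to use the standard polynomial-minimization characterization of CG together with a carefully chosen polynomial that mixes a deflation factor with a shifted Chebyshev factor. By optimality of the CG iterate over $\mathcal{K}_t(\vec{A},\vec{b})$, one has the standard reduction
\begin{equation*}
\frac{\| \vec{A}_\mu^{-1}\vec{b} - \cg{t}(\mu) \|_{\vec{A}_\mu}}{\| \vec{A}_\mu^{-1}\vec{b} \|_{\vec{A}_\mu}}
\leq \min_{\substack{q\in\mathcal{P}_t \\ q(0)=1}} \max_{i=1,\ldots,d} |q(\lambda_i+\mu)|,
\end{equation*}
obtained by writing the error as $q(\vec{A}_\mu)\vec{A}_\mu^{-1}\vec{b}$ for a degree-$t$ polynomial $q$ with $q(0)=1$ (using shift-invariance of the Krylov subspace) and diagonalizing $\vec{A}_\mu$ in its eigenbasis.

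Next, I would construct an explicit feasible $q$ as a product of two factors. The first factor exactly annihilates the top $r$ shifted eigenvalues:
\begin{equation*}
q_1(y) := \prod_{i=1}^r \frac{\lambda_i+\mu-y}{\lambda_i+\mu},
\end{equation*}
which satisfies $q_1(0)=1$, $q_1(\lambda_i+\mu)=0$ for $i\leq r$, and $|q_1(y)|\leq 1$ on $[\lambda_d+\mu,\lambda_{r+1}+\mu]$ (each factor is in $[0,1]$ there, since $\lambda_i-\lambda_d\leq \lambda_i+\mu$). The second factor is the normalized Chebyshev polynomial of degree $t-r$, rescaled to the interval $[\lambda_d+\mu,\lambda_{r+1}+\mu]$ and with value $1$ at the origin; by the standard Chebyshev extremal argument, this factor is bounded on that interval by
\begin{equation*}
2\left( \frac{\sqrt{\kappa_{r+1}(\mu)}-1}{\sqrt{\kappa_{r+1}(\mu)}+1} \right)^{t-r}.
\end{equation*}

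Setting $q=q_1\cdot q_2$ gives a valid degree-$t$ polynomial with $q(0)=1$. Since $q$ vanishes on the top $r$ shifted eigenvalues and the remaining shifted eigenvalues $\lambda_{r+1}+\mu,\ldots,\lambda_d+\mu$ lie in $[\lambda_d+\mu,\lambda_{r+1}+\mu]$, the product bound gives
\begin{equation*}
\max_{i}|q(\lambda_i+\mu)|
\leq 1\cdot 2\left( \frac{\sqrt{\kappa_{r+1}(\mu)}-1}{\sqrt{\kappa_{r+1}(\mu)}+1} \right)^{t-r}.
\end{equation*}
Applying the elementary inequality $((\sqrt{\kappa}-1)/(\sqrt{\kappa}+1))^k\leq \exp(-2k/\sqrt{\kappa})$ (the same simplification used in the footnote attached to \cref{thm:pcg_condno_bd}) yields the stated bound.

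There is no real obstacle here: everything is a standard Chebyshev/polynomial-approximation argument. The only mildly delicate point is verifying $|q_1(y)|\leq 1$ uniformly on $[\lambda_d+\mu,\lambda_{r+1}+\mu]$, which follows because each factor $(\lambda_i+\mu-y)/(\lambda_i+\mu)$ is nonnegative and bounded by $\lambda_i/(\lambda_i+\mu)\leq 1$ on that interval; this is what makes the deflation factor ``free'' and produces the $t-r$ rather than $t$ in the exponent.
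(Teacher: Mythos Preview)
The paper does not actually prove \cref{thm:cg_condno_bd_full}; it simply cites it as a standard result (``we recall a standard bound for CG; see e.g.\ \cite{greenbaum_97}''). Your argument is the standard deflation-plus-Chebyshev proof that underlies that citation, and it is correct in all details: the degree count works out ($\deg q_1+\deg q_2 = r+(t-r)=t$), the bound $|q_1|\leq 1$ on $[\lambda_d+\mu,\lambda_{r+1}+\mu]$ is verified correctly, and the Chebyshev bound together with the exponential simplification from the footnote gives exactly the stated inequality.
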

\Cref{thm:cg_condno_bd_full} guarantees that CG converges at a rate of $\sqrt{\lambda_{r+1}/\lambda_d}$ after a burn-in period of $r$ iterations. 
This is reminiscent of the bounds for Nystr\"om PCG presented in this paper, which guarantee convergence at this rate after a preconditioner build time of roughly $O(r)$ matrix-products.

\subsection{Bounds in terms of the effective dimension}
\label{sec:effective_dim}

The main theoretical bounds for Nystr\"om PCG from \cite{frangella_tropp_udell_23} are in terms of the effective dimension 
\begin{equation*}
    d_{\textup{eff}}(\mu) := \tr\bigl(\vec{A} \vec{A}_{\mu}^{-1}\bigr)
= \sum_{i=1}^{d} \frac{\lambda_i}{\lambda_i+\mu}.
\end{equation*}
The main result is a guarantee on the the condition number of the Nystr\"om preconditioned system.

\begin{theorem}[Theorem 1.1 in \cite{frangella_tropp_udell_23}]
\label{thm:constant_condno}
Let $\vec{\Omega}\in\mathbb{R}^{d\times \ell}$ be a matrix of independent standard normal random variables for some $\ell \geq 2\lceil 1.5 d_{\textup{eff}}(\mu)\rceil + 1$.
Then, with $\theta = \lambda_{\ell}(\vec{A}\langle \vec{\Omega}\rangle)$, the Nystr\"om preconditioned system satisfies
\begin{equation}
\EE\Bigl[
\kappa(\vec{P}_{\mu}^{-1/2}\vec{A}_{\mu}\vec{P}_{\mu}^{-1/2})
\Bigr] < 28.
\end{equation}
\end{theorem}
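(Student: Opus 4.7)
My plan is to apply the deterministic perturbation bound of Proposition \ref{thm:condno_perturb} with $\vec{K} = \vec{\Omega}$ (the $s=1$ case) and then take expectations over the Gaussian sketch. Setting $\vec{E} := \vec{A} - \vec{A}\langle \vec{\Omega}\rangle$ (a positive semidefinite residual) and $\theta = \lambda_\ell(\vec{A}\langle \vec{\Omega}\rangle)$, the deterministic bound can be rearranged as
\begin{equation*}
\kappa\bigl(\vec{P}_\mu^{-1/2}\vec{A}_\mu\vec{P}_\mu^{-1/2}\bigr) \leq 1 + \frac{\theta+\mu}{\lambda_d+\mu} + \|\vec{E}\|\left(\frac{1}{\theta+\mu} + \frac{1}{\lambda_d+\mu}\right).
\end{equation*}
Since monotonicity of the Nystr\"om approximation yields $\theta \leq \lambda_\ell(\vec{A})$, I need probabilistic control on $\theta$ (from above and below) and on $\|\vec{E}\|$ from above, with both bounds exploiting the oversampling $\ell \gtrsim d_{\textup{eff}}(\mu)$.

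The heart of the argument is the Nystr\"om error analysis. Using the standard factorization $\vec{A}\langle \vec{\Omega}\rangle = \vec{A}^{1/2}\vec{P}_{\vec{Y}}\vec{A}^{1/2}$ with $\vec{Y} = \vec{A}^{1/2}\vec{\Omega}$, the residual $\vec{E} = \vec{A}^{1/2}(\vec{I}-\vec{P}_{\vec{Y}})\vec{A}^{1/2}$ is positive semidefinite. I would introduce an ``effective-dimension split'' at level $\mu$: write $\vec{A} = \vec{A}_1 + \vec{A}_2$ where $\vec{A}_1$ captures the roughly $d_{\textup{eff}}(\mu)$ eigenvalues comparable to or larger than $\mu$, and $\vec{A}_2$ captures the remaining tail. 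For the $\vec{A}_1$ block, Gaussian conditioning in the style of Halko--Martinsson--Tropp shows that constant-factor oversampling yields an accurate range capture. For the $\vec{A}_2$ block, the eigenvalues are already dominated by $\mu$, so the error contribution from not approximating them is absorbed into $\lambda_d+\mu$. Together these give $\mathbb{E}[\|\vec{E}\|/(\lambda_d+\mu)] = O(1)$.

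The main obstacle is the term $\|\vec{E}\|/(\theta+\mu)$, since a priori $\theta$ could be small and this ratio could blow up. This is where the specific choice $\theta = \lambda_\ell(\vec{A}\langle \vec{\Omega}\rangle)$ is crucial: because Nystr\"om captures the top-$\ell$ spectrum well once $\ell \gtrsim d_{\textup{eff}}(\mu)$, one can show $\theta$ is not too much smaller than $\lambda_\ell(\vec{A})$, and the definition of the effective dimension forces $\lambda_\ell(\vec{A}) \lesssim \mu$, so $\theta+\mu$ is comparable to $\mu$ and $\|\vec{E}\|/(\theta+\mu)$ is $O(1)$ in expectation. Pinning down the explicit constant $28$ requires careful accounting across all three terms; the cleanest route, and the one I would pursue, is a joint expectation argument over the Gaussian sketch that handles $\theta$ and $\|\vec{E}\|$ simultaneously rather than union-bounding term by term. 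This simultaneous treatment, which relies on the random-matrix machinery developed in Frangella--Tropp--Udell, is the non-trivial step that converts the three separate $O(1)$ estimates into a clean numerical bound.
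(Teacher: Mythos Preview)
This statement is quoted verbatim from \cite{frangella_tropp_udell_23} (as the theorem header indicates), and the present paper does not supply its own proof. It appears in \cref{sec:effective_dim} solely as background for comparing the Nystr\"om PCG guarantee with the CG bound \cref{thm:CG_deff}. There is therefore nothing in this paper to compare your attempt against.

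For what it is worth, your sketch is broadly the route taken in \cite{frangella_tropp_udell_23}: start from the deterministic bound (their Proposition~5.3, generalized here as \cref{thm:condno_perturb}), control $\mathbb{E}\|\vec{E}\|$ via Halko--Martinsson--Tropp-style estimates combined with the effective-dimension inequality \cref{eqn:deff_condno}, and take expectations. One point in your outline is slightly misdirected: you flag $\|\vec{E}\|/(\theta+\mu)$ as the delicate term because ``$\theta$ could be small,'' but the trivial bound $\theta+\mu \geq \mu$ already disposes of the denominator; the real work is showing $\mathbb{E}\|\vec{E}\| = O(\mu)$. The \emph{upper} bound on $\theta$ (namely $\theta \leq \lambda_\ell(\vec{A}) \leq \mu$ via \cref{eqn:deff_condno}) is what controls the other term, $(\theta+\mu)/(\lambda_d+\mu) \leq 2$. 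With those two pieces the constant~$28$ drops out of the arithmetic; no ``simultaneous treatment'' of $\theta$ and $\|\vec{E}\|$ is needed.
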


When $\kappa(\vec{P}_{\mu}^{-1/2}\vec{A}_{\mu}\vec{P}_{\mu}^{-1/2})$ is bounded by a constant, preconditioned CG will converge to a constant accuracy (e.g. $10^{-4}$ or $10^{-16}$) in a number of iterations \emph{independent} of the condition number or spectral properties of $\vec{A}$.

We can convert \cref{thm:cg_condno_bd_full} to a bound in terms of the effective dimension by noting that all but the first $O(d_{\textup{eff}})$ eigenvalues of $\vec{A}$ are bounded by $\mu$.
In particular, \cite[Lemma 5.4]{frangella_tropp_udell_23} implies that for any $r$ and $\mu\geq 0$,
\begin{equation}
\label{eqn:deff_condno}
     r > 2d_{\textup{eff}}(\mu)
\quad\Longrightarrow\quad
\lambda_{r+1} \leq \mu.
\end{equation}
Together, \cref{thm:cg_condno_bd_full,eqn:deff_condno} give a bound for CG in terms of the effective dimension.
\begin{corollary}\label[corollary]{thm:CG_deff}
The \CG{} iterate satisfies
\begin{equation*}
    \frac{\| \vec{A}_\mu^{-1}\vec{b} - \cg{t}\|_{\vec{A}_\mu}}{\| \vec{A}_\mu^{-1}\vec{b} - \cg{0}\|_{\vec{A}_\mu}}
    \leq 2\exp\left(-\sqrt{2}(t-2 d_{\textup{eff}}(\mu))\right). 
\end{equation*}
\end{corollary}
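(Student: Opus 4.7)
The plan is to directly combine \cref{thm:cg_condno_bd_full} with the implication \cref{eqn:deff_condno}. The parameter $r$ in \cref{thm:cg_condno_bd_full} is free, so the natural strategy is to choose $r$ just large enough to force $\lambda_{r+1} \leq \mu$, at which point the deflated condition number $\kappa_{r+1}(\mu)$ collapses to a small absolute constant.

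Concretely, I would take $r$ to be the smallest integer strictly greater than $2 d_{\textup{eff}}(\mu)$, so that by \cref{eqn:deff_condno} one has $\lambda_{r+1} \leq \mu$. Then
\begin{equation*}
\kappa_{r+1}(\mu) = \frac{\lambda_{r+1}+\mu}{\lambda_d+\mu} \leq \frac{2\mu}{\lambda_d+\mu} \leq 2,
\end{equation*}
so in particular $\sqrt{\kappa_{r+1}(\mu)} \leq \sqrt{2}$. Substituting into \cref{thm:cg_condno_bd_full}, and using the fact that $\cg{0} = \vec{0}$ (so that $\|\vec{A}_\mu^{-1}\vec{b} - \cg{0}\|_{\vec{A}_\mu} = \|\vec{A}_\mu^{-1}\vec{b}\|_{\vec{A}_\mu}$) would yield
\begin{equation*}
\frac{\|\vec{A}_\mu^{-1}\vec{b}-\cg{t}\|_{\vec{A}_\mu}}{\|\vec{A}_\mu^{-1}\vec{b}-\cg{0}\|_{\vec{A}_\mu}} \leq 2\exp\!\left(-\frac{2(t-r)}{\sqrt{2}}\right) = 2\exp\!\left(-\sqrt{2}(t-r)\right),
\end{equation*}
and since $r$ can be taken arbitrarily close to $2 d_{\textup{eff}}(\mu)$ from above, the claimed exponent $-\sqrt{2}(t - 2 d_{\textup{eff}}(\mu))$ follows.

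There is essentially no real difficulty in this proof: it is a one-line substitution once the two ingredients are in hand. The only mildly delicate point — and the closest thing to an obstacle — is the integer-valued nature of $r$: the hypothesis of \cref{eqn:deff_condno} requires $r > 2 d_{\textup{eff}}(\mu)$ strictly, while $r$ must be an integer for $\lambda_{r+1}$ to be well-defined. In the worst case this introduces an additive slack of at most $1$ in $r$, which can be absorbed into the leading constant in front of the exponential (or simply ignored, since the corollary is stated as an upper bound whose constants are not being tracked sharply).
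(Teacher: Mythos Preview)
Your proposal is correct and matches the paper's approach exactly: the paper simply states that \cref{thm:cg_condno_bd_full} and \cref{eqn:deff_condno} together yield the corollary, and your argument fills in precisely those details, including the observation that $\kappa_{r+1}(\mu)\leq 2$ once $\lambda_{r+1}\leq\mu$. Your handling of the integer-rounding issue is appropriate; the paper does not track the resulting additive slack either, treating the bound as a qualitative statement about the burn-in length.
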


\Cref{thm:CG_deff} asserts that after a burn in of $2 d_{\textup{eff}}(\mu)$ iterations, CG converges in $O(\log(1/\varepsilon))$ iterations. 
Thus, the theory in \cite{frangella_tropp_udell_23} does not guarantee Nystr\"om PCG has any major benefits over CG in terms of matrix-vector products.

\subsection{Is Nystr\"om PCG a good idea?}
\label{sec:nystrom_goodidea}

The existence of a bound like \cref{thm:cg_condno_bd_full} seems damning; CG automatically satisfies bounds similar to those of of Nystr\"om PCG, without the need to construct and store a preconditioner.
However, as we now discuss, the full story is much more subtle, and we believe Nystr\"om PCG is still a viable method in many instances.

First, Nystr\"om PCG is able to parallelize matrix-vector products used to build the preconditioner.
Thus, the number of matrix-loads can be considerably less than required by CG.
This is reflected in our experiments in \cref{fig:iter}.

However, these same plots indicated that in terms of matrix-products, CG significantly outperforms Nystr\"om PCG (the $s\ell$ matrix-products to build the Nystr\"om preconditioner require only one matrix-load). 
On the other hand, in apparent contradiction to our experiments, the experiments in \cite{frangella_tropp_udell_23} indicate that Nystr\"om PCG consistently and significantly outperforms CG.

Recall that the experiments in \cref{sec:numerical} are done using full-reorthogonalization (so that exact arithmetic theory is still applicable).
On the other hand, the experiments in \cite{frangella_tropp_udell_23} appear to have been done without reorthogonalzation.
It's well-known that \cref{thm:cg_condno_bd_full} does not hold finite precision arithmetic without reorthogonalization for any $r>0$.\footnote{A weaker version does still hold \cite{greenbaum_89}.} 
On the other hand, \cref{thm:pcg_condno_bd} can be expected to hold to close degree \cite{greenbaum_89,meurant_06}.
Since Nystr\"om PCG works by explicitly decreasing the condition number of $\kappa(\vec{P}_{\mu}^{-1/2}\vec{A}_{\mu}\vec{P}_{\mu}^{-1/2})$, \emph{the convergence of Nystr\"om PCG in finite precision arithmetic is still well described by the exact arithmetic theory for Nystr\"om PCG.} 

\begin{figure}[h]
    \centering
    \begin{subfigure}[t]{0.48\textwidth}\centering
    \includegraphics[scale=0.6]{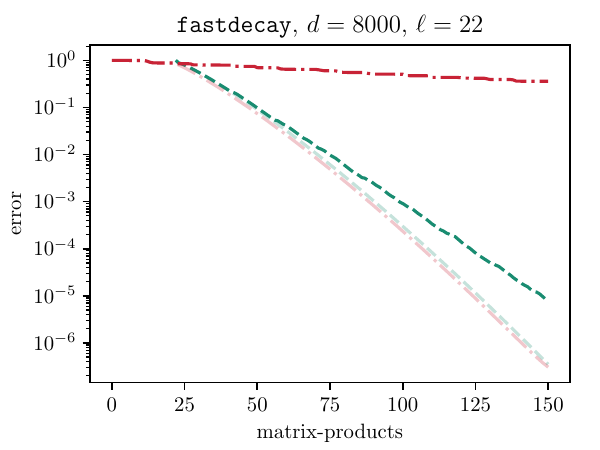}
    \vspace{-.45em}
    \caption{    CG 
    ({\protect\raisebox{0mm}{\protect\includegraphics[scale=0.68]{imgs/legend/dashdot.pdf}}}) and
    Nystr\"om PCG with
    $s=1$ 
    ({\protect\raisebox{0mm}{\protect\includegraphics[scale=0.68]{imgs/legend/dash.pdf}}}) without any reorthgonalization.
    }
    \label{fig:iter_error_fpcompare}
    \end{subfigure}
    \hfill
    \begin{subfigure}[t]{0.48\textwidth}\centering
    \includegraphics[scale=0.6]{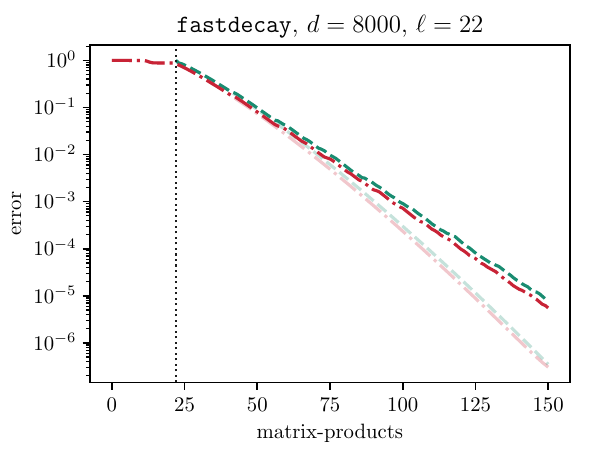}
    \vspace{-.45em}
    \caption{ 
    CG 
    ({\protect\raisebox{0mm}{\protect\includegraphics[scale=0.68]{imgs/legend/dashdot.pdf}}}) with reorthogonalization for $22$ iterations and
    Nystr\"om PCG with
    $s=1$ 
    ({\protect\raisebox{0mm}{\protect\includegraphics[scale=0.68]{imgs/legend/dash.pdf}}}) without any reorthgonalization.    
    }
    \label{fig:iter_error_fpcompare_pro}
\end{subfigure}
\caption{Relative error $\|\vec{A}^{-1}\vec{b} - \mathsf{alg}\|_{\vec{A}} / \|\vec{A}^{-1}\vec{b}\|_{\vec{A}}$ as a function of matrix products.
Light curves show convergence with full-reorthgonalization.
}
\end{figure}

We illustrate this point of comparison explicitly in \cref{fig:iter_error_fpcompare}. 
We run CG and Nystr\"om PCG on the \texttt{fastdecay} problem (for which $\lambda_1/\lambda_d \gg \lambda_{20}/\lambda_d$).
As expected, CG with reorthogonalization burns in for roughly 20 iterations, and then begins converging much more rapidly at a rate depending on $\lambda_{21}/\lambda_d$, outperforming Nystr\"om PCG with reorthogonalization ($s=1, \ell=22$), which uses 22 matrix-vector products\footnote{We need some oversampling to capture the top eigenspace.} to build the preconditioner, at which point also converges at a rate depending on $\lambda_{21}/\lambda_d$.
However, if reorthogonalization is not used, CG converges at a much slower rate depending on  $\lambda_1/\lambda_d$ while Nystr\"om PCG still converges at the faster rate depending on $\lambda_{21}/\lambda_d$.

\subsection{What if CG uses some reorthogonalization?}
\label{sec:CG_pro}

Comparing vanilla CG with Nystr\"om PCG as in \cref{fig:iter_error_fpcompare} is perhaps a bit unfair to CG; Nystr\"om PCG incurs costs in building, storing, and applying the preconditioner.
To put the algorithms on more equal footing, we might allow CG to use a similar amount of storage and arithmetic operations to do some form of reorthogonalization.
While there are many complicated schemes based on detailed analyses of the Lanczos algorithm in finite precision arithmetic \cite{parlett_scott_79,simon_84}, a simple scheme is to perform re-orthogonlization only for the first $s\ell$ iterations, and then to continue orthogonalize against these vectors in subsequent iterations. 
In this case the cost of reorthogonalization during the first $s\ell$ iterations will be $O(d s^2\ell^2)$ operations (the same as the cost to build the the Nystr\"om preconditioner) and subsequently orthogonalizing against these vectors will require $O(ds\ell)$ operations per iteration, the same as the cost to apply the Nystr\"om preconditioner to a vector.

In \cref{fig:iter_error_fpcompare_pro} we show the behavior of CG with full-reorthogonalization for the first 22 iterations and then orthogonalizing subsequent Krylov basis vectors against these first thirty vectors.
Compared to \cref{fig:iter_error_fpcompare}, we observe that CG now converges as rapidly as Nystr\"om PCG.

\subsection{block-CG}

In \cref{fig:iter} we compare the performance of these methods when we do not use full-reorthogonalzation.
In particular, we run Nystr\"om PCG with $s=3$ without any reorthogonalization. 
Building the preconditioner requires orthogonalizing $s\ell$ vectors. 
For block-CG and CG use full reorthogonalization until we have a set of $3 \ell$
vectors and then continue to orthogonalize only against these vectors.
For reference, we also display the convergence of block-CG and CG if no reorthogonalization is used at all.
Unfortunately, far less is known about the behavior of block-CG and block-Lanczos in finite precision arithmetic than is known about their single-vector counterparts.

\begin{figure}[h!]
    \centering
    \hfil\includegraphics[scale=0.6]{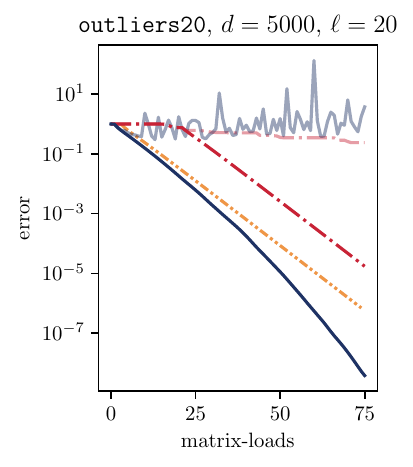}
    \hfil\includegraphics[scale=0.6,trim={.52cm 0 0 0},clip]{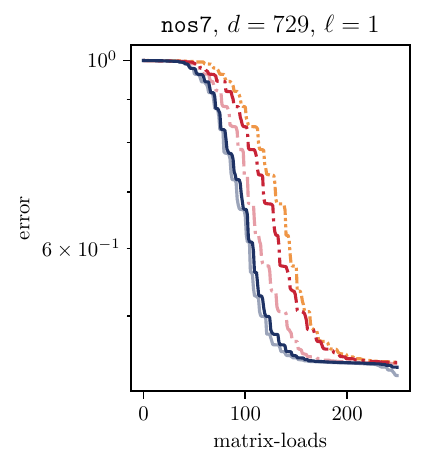}
    \hfil\includegraphics[scale=0.6,trim={.52cm 0 0 0},clip]{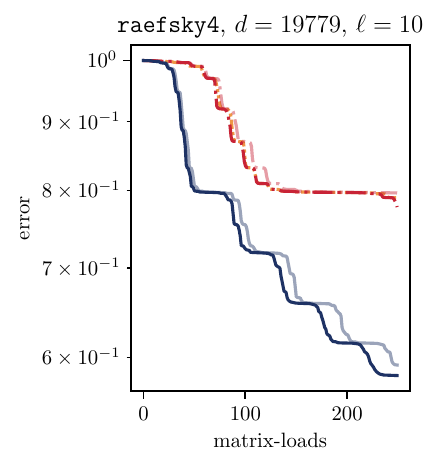}
    \caption{
    Relative error $\|\vec{A}^{-1}\vec{b} - \mathsf{alg}\|_{\vec{A}} / \|\vec{A}^{-1}\vec{b}\|_{\vec{A}}$ versus matrix-loads for
    block-CG 
    ({\protect\raisebox{0mm}{\protect\includegraphics[scale=0.68]{imgs/legend/solid.pdf}}}) with reorthogonalization for 3 iterations, 
    CG 
    ({\protect\raisebox{0mm}{\protect\includegraphics[scale=0.68]{imgs/legend/dashdot.pdf}}}) with reorthogonalization for $3\ell$ iterations, and
    Nystr\"om PCG with $s=3$
    ({\protect\raisebox{0mm}{\protect\includegraphics[scale=0.68]{imgs/legend/dashdotdot.pdf}}}) without any reorthgonalization.
    Light curves show convergence of block-CG and CG with no reorthgonalization.
    The test problems are the same as \cref{fig:iter}.
    }
    \label{fig:iter_fp}
\end{figure}

\section{Omitted proofs}\label{sec:appendix_proofs}
In this section, we provide proofs that were omitted in the main text.

\subsection{Proof of \cref{thm:condno_perturb}}
\label{sec:proofs:nystrompcg}

Let $\vec{A}\langle \vec{K} \rangle$ have rank-$r$ eigendecomposition $\vec{U}\vec{D}\vec{U}^\T$ and define
\begin{equation*}
    \vec{E} = \vec{A} - \vec{A}\langle \vec{K}\rangle
    ,\quad
    \widehat{\vec{A}}_\mu = \vec{A} \langle \vec{K}\rangle + \mu\vec{I}.
\end{equation*}
Note that 
\begin{equation*}
    \vec{P}_{\mu}^{-1}
    = (\theta + \mu) \vec{U}(\vec{D}+\mu\vec{I})^{-1} \vec{U}^\T + (\vec{I} - \vec{U}\vec{U}^\T),
\end{equation*}
and observe that 
\begin{equation*}
    \widehat{\vec{A}}_\mu = \vec{U}\vec{D}\vec{U}^\T + \mu\vec{I}
    = \vec{U}(\vec{D}+\mu\vec{I})\vec{U}^\T + \mu(\vec{I} - \vec{U}\vec{U}^\T).
\end{equation*}

We begin with an upper bound on the eigenvalues of $\vec{P}_\mu^{-1/2}\vec{A}_\mu\vec{P}_\mu^{-1/2}$.
Using the definition of $\vec{E}$ and the triangle inequality,
\begin{equation*}
    \|\vec{P}_\mu^{-1/2}\vec{A}_\mu\vec{P}_\mu^{-1/2}\|
    \leq \|\vec{P}_\mu^{-1/2}\widehat{\vec{A}}_\mu\vec{P}_\mu^{-1/2} \| + \|\vec{P}_\mu^{-1/2}\vec{E}\vec{P}_\mu^{-1/2} \|.
\end{equation*}
Using that $\vec{U}$ has orthonormal columns,
\begin{equation*}
    \vec{P}_\mu^{-1/2} \widehat{\vec{A}}_\mu\vec{P}_\mu^{-1/2}
    = (\theta+\mu) \vec{U}\vec{U}^\T + \mu (\vec{I} - \vec{U}\vec{U}^\T),
\end{equation*}
and hence, since $\theta > 0$, $\|\vec{P}_\mu^{-1/2} \widehat{\vec{A}}_\mu\vec{P}_\mu^{-1/2}\| = \theta + \mu$.
Again using that $\theta > 0$, we note that $\|\vec{P}^{-1/2}\|\leq 1$ and hence $\|\vec{P}_\mu^{-1/2}\vec{E}\vec{P}_\mu^{-1/2} \| \leq \| \vec{E} \|$.
Therefore, we find that 
\begin{equation}
\label{eqn:condno_upper}
    \|\vec{P}_\mu^{-1/2}\vec{A}_\mu\vec{P}_\mu^{-1/2}\|\leq \theta+\mu+\|\vec{E}\|.
\end{equation}

We now derive a lower bound for the eigevnalues of $\vec{P}_\mu^{-1/2}\vec{A}_\mu\vec{P}_\mu^{-1/2}$.
Using that $(\vec{P}_\mu^{-1/2}\vec{A}_\mu\vec{P}_\mu^{-1/2})^{-1} = \vec{P}_\mu^{1/2}\vec{A}_\mu^{-1}\vec{P}_\mu^{1/2}$ is similar to $\vec{A}_\mu^{-1/2} \vec{P}_\mu \vec{A}_\mu^{-1/2}$, the definition of $\vec{P}_\mu$, and the triangle inequality,
\begin{align*}
    \hspace{3em}&\hspace{-3em} 
    \|(\vec{P}_\mu^{-1/2}\vec{A}_\mu\vec{P}_\mu^{-1/2})^{-1}\|
    \\&=\| \vec{A}_\mu^{-1/2} \vec{P}_\mu \vec{A}_\mu^{-1/2} \|
    \\&= \Big\| \vec{A}_\mu^{-1/2} \Big( \frac{1}{\theta + \mu} \vec{U}(\vec{D}+\mu\vec{I}) \vec{U}^\T 
    + (\vec{I} - \vec{U}\vec{U}^\T)\Big) \vec{A}_\mu^{-1/2} \Big\|
    \\&\leq \frac{1}{\theta+\mu} \|\vec{A}_\mu^{-1/2} \vec{U}(\vec{D}+\mu\vec{I})\vec{U}^\T \vec{A}_\mu^{-1/2} \|
    + \| \vec{A}_\mu^{-1/2} (\vec{I} - \vec{U}\vec{U}^\T) \vec{A}_\mu^{-1/2} \|.
\end{align*}
Since $\vec{U}\vec{D}\vec{U}^\T$ is from a Nystr\"om approximation to $\vec{A}$, $\vec{U}\vec{D}\vec{U}^\T\preceq \vec{A}$. Therefore $\vec{U}(\vec{D}+\mu\vec{I})\vec{U}^\T
    \preceq \vec{A} + \mu\vec{I}$ and hence
\begin{equation*}
    \|\vec{A}_\mu^{-1/2} \vec{U}(\vec{D}+\mu\vec{I})\vec{U}^\T \vec{A}_\mu^{-1/2} \|\leq 1.
\end{equation*}
Likewise, since $\vec{I} - \vec{U}\vec{U}^\T \preceq \vec{I}$,
\begin{align*}
    \| \vec{A}_\mu^{-1/2} (\vec{I} - \vec{U}\vec{U}^\T) \vec{A}_\mu^{-1/2} \|
    &\leq \| \vec{A}_\mu^{-1} \| = \frac{1}{\lambda_n+\mu} 
    \\\| \vec{A}_\mu^{-1/2} \vec{P}_\mu \vec{A}_\mu^{-1/2} \|
    &\leq \frac{1}{\theta+\mu}
    + \frac{1}{\lambda_n + \mu}.
\end{align*}
Therefore,
\begin{equation}
    \label{eqn:condno_lower}
    \|(\vec{P}_\mu^{-1/2}\vec{A}_\mu\vec{P}_\mu^{-1/2})^{-1}\|
    \leq \frac{1}{\theta+\mu} + \frac{1}{\lambda_n + \mu}.
\end{equation}
Combining \cref{eqn:condno_upper,eqn:condno_lower} gives the result.
\endproof

\subsection{Bound on Elliptic integral}

\begin{lemma}\label[lemma]{thm:elliptic_k_bd}
For $m<1$, define 
\[
K(m) := \int_0^{\pi/2}(1-m\sin^2(z))^{-1/2}\d{z}.
\]
Then, for all $x > 1$,
\[
\sqrt{x} K(1-x)
\leq 
\frac{5}{8} \log(16 x).
\]
\end{lemma}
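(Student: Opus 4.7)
The plan is to reduce the claim to a logarithmic bound on $K(m)$ near its singular endpoint $m=1$ using a classical transformation, and then dispatch that reduced bound by a direct split of the integral representation.

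First I would apply the imaginary modulus transformation $K(-\lambda) = (1+\lambda)^{-1/2}\, K\!\bigl(\lambda/(1+\lambda)\bigr)$, valid for $\lambda \geq 0$ (it follows from substituting $\theta \mapsto \pi/2 - \theta$ in $\int_0^{\pi/2} d\theta / \sqrt{1 + \lambda \sin^2\theta}$, combined with a trigonometric change of variable). Taking $\lambda = x - 1$ converts $\sqrt{x}\, K(1-x)$ into $K(1 - 1/x)$. Writing $\epsilon := 1/x \in (0,1]$, the claim becomes the equivalent statement
\[
K(1-\epsilon) \;\leq\; \tfrac{5}{8}\log(16/\epsilon), \qquad \epsilon \in (0,1].
\]
This is the right form to work with, because both sides blow up logarithmically as $\epsilon \to 0^+$, with leading constants $\tfrac12$ on the left (from the standard asymptotic $K(1-\epsilon) \sim \tfrac12 \log(16/\epsilon)$) and $\tfrac58$ on the right, and the favorable ratio $\tfrac45 < 1$ is what makes the inequality possible.

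Second, I would substitute $u = \tan\theta$ to obtain $K(1-\epsilon) = \int_0^\infty du / \sqrt{(1+u^2)(1+\epsilon u^2)}$. Splitting at $u = 1$ and applying the involution $v = 1/u$ to the tail symmetrizes the two pieces; bounding $1+u^2 \geq 1$ in the first integral and $1+v^2 \geq 1$ in the second yields
\[
K(1-\epsilon) \;\leq\; \sinh^{-1}(1) + \sinh^{-1}(1/\sqrt{\epsilon}) \;\leq\; 2\log(1+\sqrt 2) + \tfrac{1}{2}\log(1/\epsilon).
\]
Comparing with $\tfrac58\log(16/\epsilon) = \tfrac52\log 2 + \tfrac58\log(1/\epsilon)$ shows the inequality holds once $\epsilon$ drops below an explicit threshold $\epsilon_0 \in (0,1)$, since the coefficient on $\log(1/\epsilon)$ drops from $\tfrac58$ to $\tfrac12$. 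For the remaining range $\epsilon \in [\epsilon_0, 1]$, I would use monotonicity of $K$ in its argument: $K(1-\epsilon) \leq K(1-\epsilon_0)$ and $\tfrac58\log(16/\epsilon) \geq \tfrac52\log 2$, reducing the claim to the single numerical check $K(1-\epsilon_0) \leq \tfrac52\log 2$ at the chosen $\epsilon_0$ (at $\epsilon_0 = 1$ this becomes $\pi/2 \leq \tfrac52\log 2$, which holds with slack $\approx 0.16$).

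The main obstacle will be tuning constants so that the two regimes patch together. The naive split is slightly too lossy at $\epsilon = 1$, giving $2\log(1+\sqrt 2) \approx 1.763$ versus a target of $\tfrac52\log 2 \approx 1.733$, so $\epsilon_0$ must be taken strictly less than $1$ with an explicit numerical verification in the middle. A cleaner alternative, which avoids case analysis, is to refine the split by cutting at $u = 1/\sqrt{\epsilon}$ and combining with the elementary inequality $(1+u^2)(1+\epsilon u^2) \geq (1+\sqrt{\epsilon}\, u^2)^2$ in the outer range; given the generous slack $\tfrac58 - \tfrac12 = \tfrac18$ in the leading term, either approach is a bookkeeping matter rather than a conceptual difficulty.
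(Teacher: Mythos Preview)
Your reduction via the imaginary modulus transformation to the statement $K(1-\epsilon)\le \tfrac58\log(16/\epsilon)$ for $\epsilon=1/x\in(0,1]$ is exactly the first move the paper makes. The two proofs diverge at the second step: the paper simply cites the inequality of Qiu and Vamanamurthy (1996), $K(m)\le \log\bigl(4/\sqrt{1-m}\bigr)\bigl(1+\tfrac{1-m}{4}\bigr)$ for $m\in(0,1)$, applies it with $m=1-1/x$, and notes $\bigl(1+\tfrac{1}{4x}\bigr)\cdot\tfrac12\le\tfrac58$. This gives the claim in two lines with no case split. Your approach instead bounds the integral directly via the substitution $u=\tan\theta$ and a split at $u=1$; this is self-contained but, as you observe, the resulting bound $\sinh^{-1}(1)+\sinh^{-1}(1/\sqrt\epsilon)$ is just slightly too loose near $\epsilon=1$, forcing either a numerical check at an interior $\epsilon_0$ or a sharper cut. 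Both arguments are valid; the paper's is shorter because it outsources the delicate part, while yours trades brevity for independence from the literature. One small slip: in the first integral you should be dropping the factor $1+\epsilon u^2\ge 1$ (not $1+u^2$) to arrive at $\sinh^{-1}(1)$; the stated bound is correct, only the description is swapped.
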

\begin{proof}~
Theorem 1.3 in \cite{qiu_vamanamurthy_96} asserts that if $m\in(0,1)$, then
\begin{equation}
\label{eqn:qv96}
    K(m)
    \leq \log\left(\frac{4}{\sqrt{1-m}}\right)\left( 1 + \frac{1-m}{4}\right).
\end{equation}
It's well-known that $K(m)$ satisfies 
\begin{equation*}
    K(m)
    = \frac{1}{\sqrt{1-m}}K\left( \frac{-m}{1-m}\right),
\end{equation*}
and hence
\begin{equation*}
    K(1-x)
    \leq 
    \frac{1}{\sqrt{x}}K\left( \frac{x-1}{x}\right).
\end{equation*}
Rearranging we find that
\begin{equation*}
\sqrt{x} K(1-x)
\leq K\left( 1-\frac{1}{x}\right), 
\end{equation*}
and if $x>1$ then $1-1/x\in(0,1)$ so we can apply \cref{eqn:qv96} to obtain the bound
\begin{equation*}
    \sqrt{x} K(1-x)
    \leq \log\left(\frac{4}{\sqrt{1/x}}\right)\left( 1 + \frac{1/x}{4}\right).
\end{equation*}
The result follows by noting that $(1+1/(4x))/2\leq 5/8$.
\end{proof}

\bibliographystyle{etna}
\bibliography{refs}

\end{document}